\pgfplotsset{every axis/.append style={
                    axis x line=middle,    % put the x axis in the middle
                    axis y line=middle,    % put the y axis in the middle
                    axis line style={->}, % arrows on the axis
                    xlabel={$x$},          % default put x on x-axis
                    ylabel={$v$},          % default put y on y-axis
            }}
\numberwithin{equation}{section}
\newcommand{\R}{\mathbb R} %% permet de faire plus simplement un R avec double barre
\newcommand{\N}{\mathbb N}
\newcommand{\C}{\mathbb C}
\newcommand{\Pro}{\mathbb P}
\newcommand{\1}{\mathbf 1}
\newcommand{\D}{\mathrm{d}}
\newcommand{\om}{\omega}
\newcommand{\e}{\mathrm{e}}
\newcommand{\hop}{\vskip.3cm\noindent} % pour faire un saut sans "indenter"
\newcommand{\hip}{\vskip.1cm\noindent}
\newtheorem{thm}{Theorem}[section]   % enlever le % devant [section] pour avoir une autre numerotation...
\newtheorem{cor}[thm]{Corollary}%[section]
\newtheorem{prop}[thm]{Proposition}%[section]
\newtheorem{lem}[thm]{Lemma}%[section]
\newtheorem{defi}[thm]{Definition}%[section]
\newtheorem{rema}[thm]{Remark}%[section]
\newtheorem{hypo}[thm]{Hypothesis}%[section]
\DeclareRobustCommand{\SkipTocEntry}[5]{}
\title[Metastability for the Linear Relaxation Boltzmann equation]{Spectral asymptotics and Metastability for the Linear Relaxation Boltzmann equation} 
\author[T. Normand]{Thomas Normand}
\email{thomas.normand@univ-nantes.fr}
\date{}
\begin{document}

\begin{abstract}
We consider the linear relaxation Boltzmann equation in a semiclassical framework.
We construct a family of sharp quasimodes for the associated operator which yields sharp spectral asymptotics for its small spectrum in the low temperature regime.
We deduce some information on the long time behavior of the solutions with a sharp estimate on the return to equilibrium as well as a quantitative metastability result.
The main novelty is that the collision operator is a pseudo-differential operator in the critical class $S^{1/2}$ and that its action on the gaussian quasimodes yields a superposition of exponentials.
\end{abstract}

\maketitle

\tableofcontents

\section{Introduction}
\subsection{Motivations}
We are interested in the linear Boltzmann equation:
\begin{equation}\label{Boltl1}
\left\{
\begin{aligned}
&h\partial_tu+v\cdot h\partial_xu-\partial_xV\cdot h\partial_vu+Q_{\mathcal H}(h,u)=0 \\ 
&u_{|t=0}=u_0
\end{aligned}
\right.
\end{equation}
in a semiclassical framework (i.e in the limit $h \to 0$), where $h$ is a \emph{semiclassical parameter} and corresponds to the temperature of the system.
Here we denoted for shortness $\partial_x$ and $\partial_v$ the partial gradients with respect to $x$ and $v$.
This equation is used to model the evolution of a system of charged particles in a gas on which acts an electrical force associated to the real valued potential $V$ that only depends on the space variable $x$. %\in \mathcal C^\infty(\R_x^d,\R)$.
The operator $Q_{\mathcal H}$ is called \emph{collision operator} and models the interactions between the particles.
%It contains an integral transform term whose distributional kernel is then called \emph{collision kernel}.
Here the unknown is the function $u:\R_+\to L^1(\R^{2d})$ giving the probability density of the system of particles at time $t \in \R_+$, position $x \in \R^d$ and velocity $v\in \R^d$.
For our purpose, we introduce the square roots of the usual Maxwellian distributions 
\begin{align}\label{muh}
\mu_h(v)=\frac{\e^{-\frac{v^2}{4h}}}{(2\pi h)^{d/4}} \qquad \text{and } \qquad \mathcal M_h=\e^{-\frac{V}{2h}}\mu_h.
\end{align}
This paper is devoted to the study of the linear BGK model for which the collision operator is
\begin{align}\label{QH}
Q_{\mathcal H}(h,u)=h\Big(u-\int_{v'\in \R^d} u(x,v')\,\D v' \mu_h^2\Big)
\end{align}
and corresponds to a simple relaxation towards the Maxwellian.
Denoting $Q_{\mathcal H}^*(h,\cdot)$ the formal adjoint of $Q_{\mathcal H}(h,\cdot)$, one can easily compute
\begin{align}\label{ql1}
Q_{\mathcal H}(h,\mathcal M_h^2)=0 \qquad \text{and } \qquad Q_{\mathcal H}^*(h,1)=0
\end{align}
so in particular $\mathcal M_h^2$ is a stable state of \eqref{Boltl1} and $Q_{\mathcal H}$ features the local conservation of mass.
In order to do a perturbative study of the time independent operator associated to \eqref{Boltl1} near $\mathcal M^2_h$, we introduce the natural Hilbert space %for the study of the time independent operator is then (see for instance \cite{Herau} or \cite{Robbe})
$$\mathcal H= \big\{u \in \mathcal D' \,;\, \mathcal M_h^{-1} u\in L^2(\R^{2d})\big\}.$$
It is clear from the Cauchy Schwarz inequality that $\mathcal H$ is indeed a subset of $L^1(\R^{2d})$ provided that $\e^{-\frac{V}{2h}}\in L^2(\R^d_x)$.
In view of \eqref{ql1} and the definition of $\mathcal H$, it is more convenient to work with the new unknown
$$f=\mathcal M_h^{-1} u\, :\R_+\to L^2(\R^{2d})$$
for which the new equation becomes 
\begin{equation}\label{Boltgene}
\left\{
\begin{aligned}
&h\partial_tf+v\cdot h\partial_xf-\partial_xV\cdot h\partial_vf+Q_h(f)=0 \\ 
&f_{|t=0}=f_0
\end{aligned}
\right.
\end{equation}
where
\begin{align}\label{QHconj}
Q_h=\mathcal M_h^{-1}\circ Q_{\mathcal H}(h, \cdot) \circ \mathcal M_h.
\end{align}
Denoting with the notation \eqref{muh},
\begin{align}\label{Pih}
\Pi_h : L^2(\R^{2d})\to L^2(\R^{2d})
\end{align}
the orthogonal projection on $\mu_h\, L^2(\R^{d}_x)$,
we have by \eqref{QH} and \eqref{QHconj}
\begin{align}\label{qlr}
Q_h=h(\mathrm{Id}-\Pi_h).
\end{align}
Our study will be focused on the spectral properties of the new time independent operator 
\begin{align}\label{Ph}
P_h&=v\cdot h\partial_x-\partial_xV\cdot h\partial_v+h(\mathrm{Id}-\Pi_h) \nonumber \\
&=X_0^h+Q_h 
\end{align}
where the notation $X_0^h$ will stand for the operator $v\cdot h\partial_x-\partial_xV\cdot h\partial_v$, but also for the vector field $(x,v)\mapsto h(v,-\partial_x V(x))$.

This type of questions has recently known some major progress on the impulse of microlocal methods.
%In the case of the Boltzmann equation \eqref{Boltgene}, a first step was achieved in \cite{Robbe} where the author established a rough localization of the small spectrum of $P_h$ similar to the one obtained for example for the Witten Laplacian in \cite{HelSjo}. 
%This was done using hypocoercive techniques to get some resolvent estimates.
The operator $P_h$ was already studied in 2016 in \cite{Robbe} where the use of hypocoercive techniques enabled to get some resolvent estimates and establish a rough localization of its small spectrum which consists of exponentially small eigenvalues in correspondance with the minima of the potential $V$. 
This type of result is similar to the one obtained for example for the Witten Laplacian by Helffer and Sj\"ostrand in \cite{HelSjo} in the 1980's.
Such a localization already leads to return to equilibrium and metastability results which can be improved as the description of the small spectrum becomes more precise.
For example, sharp asymptotics of the small eigenvalues of the Witten Laplacian were obtained later in the 2000's in \cite{BoGaKl} and \cite{HeKlNi} and later again for Kramers-Fokker-Planck type operators by H\'erau et al. in \cite{HHS}.
In these papers, the idea was to exhibit a supersymmetric structure for the operator and then study both the derivative acting from 0-forms into 1-forms and its adjoint with the help of basic quasimodes.
%In \cite{theserobbe}, Robbe managed to show that the Boltzmann equation \eqref{Boltgene} with mild relaxation enjoys such a supersymmetric structure.
However, these methods do not apply to the Boltzmann equation as in that case the matrix appearing in the modification of the inner product does not obey good estimates with respect to the semiclassical parameter $h$ (see for instance \cite{theserobbe} for the case of the \emph{mild relaxation} collision operator).

This is why our goal in this paper will be to give precise spectral asymptotics for the operator $P_h$ through a more recent approach which consists in directly constructing a family of accurate \emph{gaussian quasimodes} for our operator in the spirit of \cite{LPMichel,BonyLPMichel} for Fokker-Planck type differential operators and \cite{me} for the mild relaxation Boltzmann equation.
Here the first difficulty is that like in \cite{me}, the operator that we consider is non local and hence it is harder to compute its action on the constructed quasimodes.
This will be overcome thanks to the factorization result stated in Proposition \ref{facto}.
The second and main difficulty is that unlike in \cite{me}, the bad microlocal properties of $Q_h$ are such that its action on a gaussian quasimode as used in \cite{LPMichel,BonyLPMichel,me} does not yield a precise exponential, but rather a superposition of exponentials (see Lemma \ref{actionq}) wich will lead to the introduction of some new quasimodes given by a superposition of \og usual \fg{} gaussian quasimodes.
The result that we manage to establish is similar to the one from \cite{HeKlNi} for the Witten Laplacian as well as the ones from \cite{HHS, HHS11} with recent improvements by Bony et al. in \cite{BonyLPMichel} for the Fokker-Planck equation.

\subsection{Setting and main results}
For $d'\in \N^*$ and $Z\in \C^{d'} $, we use the standard notation $\langle Z \rangle=(1+|Z|^2)^{1/2}$.
Let us introduce a few notations of semiclassical microlocal analysis which will be used in all this paper.
These are mainly extracted from \cite{Zworski}, chapter 4.
For our purpose, it is sufficient to consider pseudo-differential operators acting only in the variable $v$.
%\begin{defi}\label{symbols}
We will denote $\eta\in \R^{d}$ %(resp. $\eta$) 
the dual variable of $v$ %(resp. $v$) 
and use the semiclassical Fourier transform
$$\mathcal F_h(f)(\eta)= \int_{\R^{d}}\e^{-\frac ih v\cdot \eta}f(v) \, \D v.$$
We consider the space of semiclassical symbols 
\begin{align*}
S^\kappa \big(\langle (v,\eta) \rangle^k\big)=\big\{a_h \in \mathcal C^\infty(\R^{2d}) \, ; \, \forall \alpha \in \N^{2d}, \exists \, C_\alpha>0\;
		\emph{\text{such that }} |\partial^\alpha a_h(v,\eta)|\leq C_\alpha h^{-\kappa |\alpha|}\langle (v,\eta) \rangle^k\big\}
\end{align*}
where $k\in \R$ and $\kappa \in [0, 1/2]$.
%Note that those symbols are allowed to depend on $h$; however, in order to shorten the notations, we will drop the index $h$ in the rest of the paper when dealing with semiclassical symbols.
%\end{defi}
%\begin{defi}\label{oph}
Given a symbol $a_h \in S^\kappa(\langle (v,\eta) \rangle^k)$, we define the associated semiclassical pseudo-differential operator for the Weyl quantization acting on functions $u \in \mathcal S(\R^{d})$ by
$$\mathrm{Op}_h(a_h)u(v)=(2\pi h)^{-d}\int_{\R^{d}} \int_{\R^{d}}\e^{\frac ih (v-v')\cdot \eta}a_h\Big(\frac{v+v'}{2},\eta\Big)u(v')\,\D v'\D \eta$$
where the integrals may have to be interpreted as oscillating integrals.
We will denote $\Psi^\kappa(\langle (v,\eta) \rangle^k)$ the set of such operators.
Note that 
the operator $\mathrm{Op}_h(a_h)$ admits the distributional kernel
$$K_h(v,v')=\mathcal F_h^{-1}\bigg(a_h\bigg(\frac{v+v'}{2},\cdot\bigg)\bigg)(v-v').$$
Conversely, if an operator $\mathrm{Op}_h(a_h)\in \Psi^\kappa(\langle (v,\eta) \rangle^k)$ admits the distributional kernel $K_h(v,v')$, then its symbol is given by
\begin{align}\label{FK}
a_h(v,\eta)=\mathcal F_h\Big(\big(K_h\circ A\big)(v,\cdot)\Big)(\eta)
\end{align}
where $A$ denotes the change of variables 
\begin{align}\label{mata}
A(v,v')=(v+v'/2,v-v'/2).
\end{align}

We will also make a few confining assumptions on the function $V$, assuring for instance that the bottom spectrum  of the associated Witten Laplacian is discrete. In particular, our potential will satisfy Assumption 2 from \cite{LPMichel} and Hypothesis 1.1 from \cite{Robbe}.
\begin{hypo}\label{V}
The potential $V$ is a smooth Morse function depending only on the space variable $x\in \R^d$ with values in $\R$ which is bounded from below and such that
$$|\partial_x V(x)|\geq \frac1C\qquad \text{for }|x|>C.$$
Moreover, for all $\alpha \in \N^d$ with $|\alpha|\geq 2$, there exists $C_\alpha$ such that 
$$|\partial_x^\alpha V|\leq C_\alpha.$$
\hop
In particular, for every $0\leq k\leq d$, the set of critical points of index $k$ of $V$ that we denote $\mathtt U^{(k)}$ is finite and we set 
\begin{align}\label{n0}
n_0=\# \mathtt U^{(0)}.
\end{align}
Finally, we will suppose that $n_0\geq 2$.
\end{hypo}
\hip
The last assumption comes from the fact that when $n_0=1$, the so-called \emph{small spectrum} of the operator $P_h$ (i.e its eigenvalues with exponentially small modulus) is trivial, so there is nothing to study.
It is shown in \cite{MeSc}, Lemma 3.14 that for a function $V$ satisfying Hypothesis \ref{V}, we have $V(x)\geq |x|/C$ outside of a compact. In particular, under Hypothesis \ref{V}, it holds $\e^{-V/2h}\in L^2(\R^{d}_x)$.
Moreover, in our setting, $X_0^h$ is a smooth vector field whose differential is bounded on $\R^{2d}$, so the operator $X_0^h$ endowed with the domain
$$D=\{u \in L^2(\R^{2d}) \, ; \, X_0^hu \in L^2(\R^{2d})\}$$
is skew-adjoint on $L^2(\R^{2d})$ and the set $\mathcal S(\R^{2d})$ is a core for this operator.
Since moreover the collision operator $Q_h$ defined in \eqref{qlr} is bounded and self-adjoint, we have $(P_h,D)^*=(-X_0^h+Q_h,D)$ and $(P_h,D)$ is m-accretive on $L^2(\R^{2d})$.

For an operator such as $P_h$, which is not for instance self-adjoint with compact resolvent, we do not have any information a priori on its spectrum (except here that it is contained in $\{z \in \C \, ; \, \mathrm{Re} \, z \geq 0\}$).
In \cite{Robbe}, the use of hypocoercive techniques enabled to establish a first description of the spectrum of $P_h$ near 0 which, in the
spirit of the case of other non self-adjoint operators studied in \cite{HHS}, appears in particular to be discrete.
More precisely, the following result is shown in \cite{Robbe}:
\begin{thm}\label{thmRobbe}
Assume that Hypothesis \ref{V} is satisfied and recall the notation \eqref{n0}.
Then the operator $(P_h,D)$ admits $0$ as a simple eigenvalue.
Moreover, there exists $c>0$ and $h_0>0$ such that for all $0<h\leq h_0$, we have that 
$\mathrm{Spec}(P_h)\cap \{\mathrm{Re} \,z\leq ch\}$ consists of exactly $n_0$ eigenvalues (counted with algebraic multiplicity) %that are in $\{\mathrm{Re}$ $ z>0\}$ and 
which are real and exponentially small with respect to $1/h$.
Finally, for all $0<\tilde c\leq c$, the resolvent estimate 
$$(P_h-z)^{-1}=O(h^{-1})$$
holds uniformly in $\{\mathrm{Re} \,z\leq ch\} \backslash B(0, \tilde c h)$.
%Finally, except for $0$, the real parts of these small eigenvalues are positive.
\end{thm}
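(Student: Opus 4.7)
The plan is to combine hypocoercive estimates (to control the spectrum away from a neighborhood of $0$) with a quasimodal construction (to count and localize the small spectrum). Since $P_h^\ast=-X_0^h+Q_h$ with $X_0^h$ skew-adjoint, one has $2\,\mathrm{Re}\langle P_h u,u\rangle=2h\|(\mathrm{Id}-\Pi_h)u\|^2$, which is degenerate: it only controls the part of $u$ orthogonal to $\mu_h L^2(\R^d_x)$. This is the classical hypocoercivity difficulty, and the first step is to recover coercivity in all variables by passing to a modified inner product of the form $\langle u,u\rangle_B=\|u\|^2+h\,\mathrm{Re}\langle Bu,u\rangle$, where $B$ is a bounded operator built from a regularized inverse of $X_0^h$ acting on the equilibrium sector, in the spirit of H\'erau's commutator scheme. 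Computing $[P_h,B]$ and using that $[X_0^h,\Pi_h]\Pi_h$ recovers derivatives in $x$ on the Maxwellian sector, one obtains an estimate of the form
\begin{equation}
\mathrm{Re}\langle P_h u,u\rangle_B\ \geq\ ch\|u\|_B^2-C h\|\Pi_0 u\|^2,
\end{equation}
where $\Pi_0$ is the spectral projector onto the bottom of some reference Witten Laplacian in $x$.

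Next I would transfer this inequality into a resolvent estimate. The bound above, combined with the m-accretivity of $(P_h,D)$, yields the desired $\|(P_h-z)^{-1}\|=O(h^{-1})$ on $\{\mathrm{Re}\,z\leq ch\}\setminus B(0,\tilde c h)$ once one knows that the ``obstruction'' $\Pi_0$ is finite-dimensional and of dimension $n_0$. This is classical, since $\Pi_0$ is associated to the minima of $V$ (via the semiclassical Witten Laplacian). The same inequality shows that the spectral projector
\begin{equation}
\Pi_h^{\mathrm{small}}=\frac{1}{2i\pi}\oint_{\partial B(0,\tilde c h)}(z-P_h)^{-1}\,\D z
\end{equation}
has rank at most $n_0$.

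To show that the rank is exactly $n_0$ and that $0$ is a simple eigenvalue, I would construct $n_0$ approximate eigenfunctions $f_j=\chi_j\mathcal M_h/\|\chi_j\mathcal M_h\|$, where $\chi_j$ is a smooth cutoff localizing on the basin of attraction of the $j$-th minimum of $V$ under the deterministic flow of $X_0^h$. By the confinement in Hypothesis \ref{V} and the identity $P_h\mathcal M_h=0$ (which is the $L^2$-version of \eqref{ql1}), $P_h f_j$ is supported where $\nabla\chi_j\neq 0$ and is therefore exponentially small in $L^2$. The family $(f_j)$ is almost orthonormal, which forces $\mathrm{rank}\,\Pi_h^{\mathrm{small}}\geq n_0$. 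Combined with the upper bound from the previous step, the small spectrum contains exactly $n_0$ eigenvalues (with multiplicity), all exponentially small, and $0$ is one of them (since $\mathcal M_h\in\ker P_h$); simplicity of $0$ follows from the fact that any kernel element is constant along the flow and must equal $\mathcal M_h$ up to scalar by the equilibrium property of $\Pi_h$.

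Finally, reality of the small eigenvalues is obtained from the PT-symmetry $UP_hU^{-1}=P_h^\ast$, where $Uf(x,v)=f(x,-v)$: indeed $X_0^h$ is odd and $Q_h$ is even under this involution because $\mu_h$ is even in $v$. This implies $\mathrm{Spec}(P_h)=\overline{\mathrm{Spec}(P_h)}$, and combined with the fact that the small eigenvalues form a conjugation-invariant set of cardinality $n_0$ inside a disk of exponentially small radius, a perturbation argument from the self-adjoint situation (or a direct study of the reduced $n_0\times n_0$ matrix in the almost-orthonormal basis $(f_j)$, which is symmetric to leading order) forces them to be real. The main obstacle in this whole scheme is the first step: choosing the auxiliary operator $B$ so that the unfavorable microlocal class $S^{1/2}$ of $Q_h$ does not ruin the commutator estimates; everything else is then essentially a standard consequence of Fredholm theory and quasimodal localization.
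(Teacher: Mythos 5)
The paper does not actually prove Theorem \ref{thmRobbe}: it is imported verbatim from \cite{Robbe}, and the author explicitly says so (``the following result is shown in \cite{Robbe}''). So there is no internal proof to compare against; what the paper does later confirm (in the proof of Corollaries \ref{ral}--\ref{meta}) is that \cite{Robbe} uses PT-symmetry and a quantitative Gearhart--Pr\"uss argument, which is consistent with the outline you give. Your sketch --- hypocoercivity via a modified inner product, an $O(h^{-1})$ resolvent bound away from a small disk, a quasimodal lower bound on the rank of the small spectral projector, and PT-symmetry for reality --- is indeed the route of \cite{Robbe}, so at the level of strategy you have reconstructed the right proof.

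A few points in your write-up are imprecise and would need fixing before this could be called a proof. First, the flow generated by $X_0^h$ is Hamiltonian and conserves $W$, so there are no ``basins of attraction''; what one actually does is take cutoffs $\chi_j$ \emph{depending on $x$ only}, localized on a sublevel set of $V$ around $\mathbf m_j$. This is not cosmetic: if $\chi_j=\chi_j(x)$ then $f_j=\chi_j e^{-W/h}=\chi_j(x)e^{-V/2h}\mu_h(v)\in\mu_h L^2(\R^d_x)$ belongs exactly to the range of $\Pi_h$, hence $Q_h f_j=0$ identically, and $P_h f_j=X_0^h f_j=h\,v\cdot\partial_x\chi_j\,e^{-W/h}$ is localized on $\operatorname{supp}\nabla\chi_j$ and exponentially small. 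If instead $\chi_j$ depended on $v$, the nonlocal projection $\Pi_h$ would smear it out and you would lose both the exact vanishing of $Q_hf_j$ and the exponential smallness. Second, the PT-symmetry $UP_hU^{-1}=P_h^*$ with $Uf(x,v)=f(x,-v)$ only gives $\operatorname{Spec}(P_h)=\overline{\operatorname{Spec}(P_h)}$, i.e.\ the spectrum is stable under conjugation; this does not by itself rule out complex-conjugate pairs of exponentially small eigenvalues. The actual argument uses the symmetry more quantitatively: one checks that $U$ maps the small spectral subspace to its adjoint counterpart with a uniformly positive ``angle'', so that $P_h$ restricted to this subspace is similar to a self-adjoint operator, which forces reality; simply invoking ``a perturbation argument from the self-adjoint situation'' glosses over this. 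Finally, the displayed hypocoercive inequality with a defect $-Ch\|\Pi_0 u\|^2$ is schematically right but the correct statement controls $\|u\|^2$ modulo a finite-dimensional obstruction and is derived with a bounded auxiliary operator $B$ built from $(1-h^2\Delta_x+|\partial_xV|^2)^{-1}$ acting on the equilibrium sector; you should also say why the $S^{1/2}$ class of $Q_h$ does not wreck the commutator computations --- the point being that $B$ acts only in $x$ while $Q_h$ acts only in $v$, so they commute, and it is only $[X_0^h,B]$ that needs to be estimated.
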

\hip

In order to study the long time behavior of the solutions of \eqref{Boltgene}, we need a precise description of the small spectrum of $P_h$.
To this aim, we construct in Section \ref{sectionquasim} a family of accurate quasimodes localized around the minima of $V$ that enables us to establish sharp asymptotics of the small eigenvalues of $P_h$.
This will lead to the following Theorem which is the main result of this paper.
Before we can state it, let us introduce a few notations that we will use throughout the paper.
We denote 
 \begin{align}\label{W}
W(x,v)=\frac{V(x)}{2}+\frac{v^2}{4}
\end{align}
the global potential on $\R^{2d}$ and for $x\in \R^d$,
\begin{align}\label{hessiennes}
\mathcal V_x \text{ (resp. }\mathcal W_x\text{) the Hessian of } V \text{ at }x\;\big( \text{resp. the Hessian of } W \text{ at }(x,0)\big).
\end{align}
When $\mathbf s\in \R^d$ is a saddle point of $V$ (i.e $\mathbf s\in \mathtt U^{(1)}$), we also denote
\begin{align}\label{tau}
\tau_{\mathbf s} \text{ the only negative eigenvalue of } \mathcal V_{\mathbf s}.
\end{align}
For the sake of simplicity, we will make in the statement of the Theorem an additionnal assumption (Hypothesis \ref{jvide}) on the topology of the potential $V$ that could actually be omitted (see \cite{Michel} or \cite{BonyLPMichel}).
It implies in particular that $V$ has a unique global minimum that we denote $\underline{\mathbf m}$.\\
According to Theorem \ref{thmRobbe}, we can associate to each $\mathbf m \in \mathtt U^{(0)}\backslash \{\underline{\mathbf m}\}$ a non zero exponentially small eigenvalue of $P_h$ that we denote $\lambda(\mathbf m,h)$.
\begin{thm}\label{thmToto}
Suppose that %the assumptions of Theorem \ref{thmRobbe} are satisfied, as well as 
Hypotheses \ref{V} and \ref{jvide} are satisfied and recall the notations \eqref{hessiennes}-\eqref{tau}.\\
The exponentially small eigenvalues of $P_h$ satisfy the following equivalent in the limit $h\to 0$:
\begin{align}\label{rllambda}
\lambda(\mathbf m,h)\sim h \varrho(\mathbf m)\, \e^{\frac{-2S(\mathbf m)}{h}}
\end{align}
with
\begin{align}
\varrho(\mathbf m)=\frac{1}{\pi}\sum_{\mathbf s\in \mathbf j(\mathbf m)}\bigg(\frac{2+\sqrt2}{2-\sqrt2}\bigg)^{\frac{1}{\sqrt{|\tau_{\mathbf s}|}}}\bigg(\frac{\det \mathcal V_{\mathbf m}}{|\det \mathcal V_{\mathbf s}|}\bigg)^{1/2}   \int_{\gamma_1\leq z\leq \gamma < 1} k_0^{\mathbf s}(\gamma) k_0^{\mathbf s}(z) \ln \bigg(2\,\frac{(1+z)(1+\gamma)}{1+3z+3\gamma+z\gamma}\bigg) \, \D z \D \gamma%\\		& \qquad+O\Big(\nu^{\frac{1}{2\sqrt{|\tau_{\mathbf s}|}}}\,|\ln (\nu)|^2\Big).
\end{align}
where
$$k^{\mathbf s}_0(z)=\frac{2\sqrt2}{\sqrt{|\tau_{\mathbf s}|}(z-\gamma_2)^2}\Big(\frac{z-\gamma_1}{z-\gamma_2}\Big)^{\frac{1}{2\sqrt{|\tau_{\mathbf s}|}}-1} \qquad ; \qquad \gamma_1=-3+2\sqrt2 \qquad ; \qquad \gamma_2=-3-2\sqrt2 $$
and the maps $S$ and $\mathbf j$ are defined in Definition \ref{j et s}.
\end{thm}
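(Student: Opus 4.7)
The plan is to adapt the Gaussian quasimode strategy of \cite{LPMichel,BonyLPMichel,me} to the present pseudo-differential setting. Theorem \ref{thmRobbe} already localizes the small spectrum and supplies a resolvent estimate on a suitable annulus, so the rank $n_0$ spectral projector $\Pi_h^{(0)}$ onto the small spectral subspace is well defined by a Cauchy integral and the exponentially small eigenvalues of $P_h$ coincide with those of $\Pi_h^{(0)}P_h\Pi_h^{(0)}$. It therefore suffices to compute, up to errors of order $o(h\,\e^{-2S(\mathbf m)/h})$, the matrix of $P_h$ in a well chosen basis of approximate eigenfunctions indexed by the minima $\mathbf m\in \mathtt U^{(0)}$.

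For each such $\mathbf m$, I build a quasimode $\varphi_{\mathbf m}$ supported essentially in the ``half well'' of $\mathbf m$ delimited by the saddles $\mathbf s\in \mathbf j(\mathbf m)$. The natural Ansatz is a cut-off Gaussian of the form $\chi_{\mathbf m}\,\e^{-\Phi_{\mathbf m}/h}$, where the phase $\Phi_{\mathbf m}$ equals $W$ in the well and is adjusted near the saddles so that the transport term $X_0^h$ annihilates it to sufficient order. For a differential operator (Witten, Kramers--Fokker--Planck) this is enough, but here $Q_h=h(\mathrm{Id}-\Pi_h)$ is non-local and lies in the critical class $S^{1/2}$: by Lemma \ref{actionq}, it sends a single Gaussian not to another Gaussian but to a one-parameter superposition indexed by a variable $z$. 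To close the computation I therefore replace the single Gaussian at each saddle by a superposition
\[
\varphi_{\mathbf m}=\int k^{\mathbf s}_0(z)\,\varphi_{\mathbf m,z}\, \D z,
\]
where the $\varphi_{\mathbf m,z}$ are model Gaussians with rescaled covariance and $k^{\mathbf s}_0$ is the weight of the statement. This density is selected by an ODE/transport problem at the saddle whose underlying $2\times 2$ symbolic model has eigenvalues $\gamma_1=-3+2\sqrt 2$ and $\gamma_2=-3-2\sqrt 2$; this explains the exponent $1/(2\sqrt{|\tau_{\mathbf s}|})$ and the power $\bigl((2+\sqrt 2)/(2-\sqrt 2)\bigr)^{1/\sqrt{|\tau_{\mathbf s}|}}$ appearing in the prefactor. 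The factorization result of Proposition \ref{facto} is the key tool reducing the non-local action of $Q_h$ on these superpositions to a tractable integral kernel.

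Once the quasimodes are constructed, the Gram matrix $(\langle\varphi_{\mathbf m},\varphi_{\mathbf m'}\rangle)_{\mathbf m,\mathbf m'}$ is exponentially close to the identity by localization, and the interaction matrix $(\langle P_h\varphi_{\mathbf m},\varphi_{\mathbf m'}\rangle)_{\mathbf m,\mathbf m'}$ is nearly diagonal, the dominant contribution to each diagonal entry coming from a single saddle point $\mathbf s\in\mathbf j(\mathbf m)$ via Laplace asymptotics. The prefactor $\varrho(\mathbf m)$ then emerges term by term: the quotient $(\det\mathcal V_{\mathbf m}/|\det\mathcal V_{\mathbf s}|)^{1/2}$ is the standard Laplace ratio between the well at $\mathbf m$ and the saddle $\mathbf s$, while the double integral with logarithmic kernel in $(z,\gamma)$ arises precisely because both the quasimode and its image under $Q_h$ are superpositions, paired through the integral kernel of $Q_h$ at the saddle, the logarithm being produced by the resulting elementary integration over the collision variable. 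Hypothesis \ref{jvide} ensures that the labelling $\mathbf m\mapsto \lambda(\mathbf m,h)$ is unambiguous, and a standard perturbation argument \`a la Helffer--Klein--Nier \cite{HeKlNi} then identifies each $\lambda(\mathbf m,h)$ with the corresponding diagonal entry of the interaction matrix.

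The hardest step is the construction and analysis of the superposition quasimode at each saddle, together with the verification that the remainder $P_h\varphi_{\mathbf m}-\lambda\varphi_{\mathbf m}$ is indeed $o(h\,\e^{-2S(\mathbf m)/h})$ in the appropriate weighted space. For a local operator one can work well by well with elementary Gaussians; the non-locality of $Q_h$ forces one to solve an auxiliary transport problem at each saddle, whose fundamental solution is precisely $k^{\mathbf s}_0$, and to track this one-parameter family globally through the cut-offs and the cross-well pairings. Everything else --- the spectral projector, the Gram matrix, the global Laplace asymptotics --- is by now standard in this circle of ideas.
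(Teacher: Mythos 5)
Your proposal follows essentially the same route as the paper: Theorem~\ref{thmRobbe} supplies the localization and resolvent estimate, the projector $\Pi_0$ of~\eqref{Pi0} reduces the problem to a finite matrix, and the key new idea is to replace the single Gaussian at each saddle by a superposition weighted by $k_0^{\mathbf s}$, using the factorization $Q_h=b_h^*\,\mathrm{Op}_h(m_h)\,b_h$ of Proposition~\ref{facto} to make the non-local action of $Q_h$ computable. The identification of the Laplace factor $(\det\mathcal V_{\mathbf m}/|\det\mathcal V_{\mathbf s}|)^{1/2}$, of the double integral with logarithmic kernel coming from pairing two superpositions through the kernel of $Q_h$, and of the final graded-matrix/perturbation step are all in line with Sections~\ref{sectionquasim}--5.

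There is, however, one genuine gap. You propose to take the superposition $\int k_0^{\mathbf s}(z)\,\varphi_{\mathbf m,z}\,\mathrm d z$ directly with $k_0^{\mathbf s}$ on $(\gamma_1,1]$. But there is no cumulative distribution function on a fixed subinterval $[\gamma_1+\nu,1]$ solving the eikonal ODE~\eqref{ode} exactly, and the error estimates in the construction (Propositions~\ref{phf},~\ref{pf}, Lemma~\ref{Pf^2}) are of the form $O_\nu(h^{1/2})+O\bigl(\nu^{1/(2\sqrt{|\tau_{\mathbf s}|})}|\ln\nu|\bigr)$: the first term blows up as $\nu\to 0$ at fixed $h$, and the cut-offs $\chi_{\mathbf m}$, $\zeta$ must themselves be chosen $\nu$-dependently (cf.\ the discussion after Proposition~\ref{thetalisse}). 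Consequently the paper works on $[\gamma_1+\nu,1]$ with the approximate CDF~$K_\nu^{\mathbf s}$ of~\eqref{knu}, establishes all estimates with $\nu$ fixed, applies Theorem~A.4 of~\cite{LPMichel} for each $\nu$, and only then lets $\nu\to 0$ after $h\to 0$. Skipping this regularization, as you do, leaves the remainder estimates non-uniform and the ``$o(h\,\e^{-2S(\mathbf m)/h})$'' claim unjustified. As a smaller point, the constants $\gamma_1,\gamma_2$ are not eigenvalues of a $2\times 2$ symbolic model at the saddle: they are the roots of the polynomial $P(\gamma)=4\gamma+(\gamma+1)^2$ of~\eqref{P}, which is the squared normalization of the family of linear forms $L_\gamma^{\mathbf s}$ produced by Lemma~\ref{actionq}, and it is $P$ (not a symbol matrix) that enters the denominator of the ODE~\eqref{ode}.
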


Finally, following \cite{Robbe}, we use the sharp localization obtained in Theorem \ref{thmToto} in order to discuss the phenomena of return to equilibrium and metastability for the solutions of \eqref{Boltgene}.
More precisely, we are able to give a sharp rate of convergence of the semigroup $\e^{-tP_h/h}$ towards $\Pro_1$, the orthogonal projector on Ker $P_h$ : denoting $\lambda^*$ the smallest non zero eigenvalue of $P_h$, we establish that the rate of return to equilibrium is essentially given by $\lambda^*/h$:
\begin{cor}\label{ral}
Under the assumptions of Theorem \ref{thmToto}, there exists $h_0>0$ such that for all $0<h\leq h_0$ and $t\geq 0$,
$$\|\e^{-tP_h/h}-\Pro_1\|\leq C \e^{-t \lambda^*/h}.$$
\end{cor}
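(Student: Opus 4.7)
The plan is to split $\e^{-tP_h/h}-\Pro_1$ into a finite-dimensional contribution from the exponentially small spectrum, controlled by the sharp asymptotics of Theorem \ref{thmToto}, and an infinite-dimensional remainder, controlled as in \cite{Robbe}.

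First I would introduce the Riesz projector onto the small spectrum,
$$\mathcal P=\frac{1}{2i\pi}\oint_{|z|=\tilde c h}(z-P_h)^{-1}\,\D z,$$
which by Theorem \ref{thmRobbe} is well defined, of rank $n_0$, commutes with $P_h$, and has norm $O(1)$ uniformly in $h$ (the resolvent being $O(h^{-1})$ on this contour of length $O(h)$). I would then treat separately the two pieces in the decomposition
$$\e^{-tP_h/h}-\Pro_1 = \bigl(\e^{-tP_h/h}\mathcal P - \Pro_1\bigr) + \e^{-tP_h/h}(\mathrm{Id}-\mathcal P).$$
For the finite-dimensional piece, the restriction of $P_h$ to $\mathrm{Ran}\,\mathcal P$ is a matrix whose eigenvalues are $0$ and the $\lambda(\mathbf m,h)$: these are real by Theorem \ref{thmRobbe} and mutually distinct for $h$ small enough by Theorem \ref{thmToto}, so this matrix is diagonalizable with bounded eigenprojectors $\mathcal P_{\mathbf m}$. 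A direct check gives $\ker P_h = \ker P_h^* = \C\,\mathcal M_h$, so the Riesz projector at $0$ coincides with the orthogonal projector $\Pro_1$ and
$$\|\e^{-tP_h/h}\mathcal P-\Pro_1\| = \Big\|\sum_{\mathbf m\neq \underline{\mathbf m}}\e^{-t\lambda(\mathbf m,h)/h}\,\mathcal P_{\mathbf m}\Big\| \leq C\e^{-t\lambda^*/h}.$$

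For the infinite-dimensional piece, $P_h$ restricted to $\mathrm{Ran}(\mathrm{Id}-\mathcal P)$ has spectrum inside $\{\mathrm{Re}\,z\geq ch\}$ by Theorem \ref{thmRobbe}. Following \cite{Robbe}, I would combine the semiclassical resolvent estimate of Theorem \ref{thmRobbe} on a bounded window of the vertical line $\mathrm{Re}\,z=ch/2$ with a large-frequency bound derived from the m-accretivity of $P_h$ and the structural decomposition $P_h=X_0^h+Q_h$ (with $X_0^h$ skew-adjoint and $0\leq Q_h\leq h$ self-adjoint), obtaining a uniform resolvent bound on the full line. The Gearhart-Prüss theorem then yields
$$\|\e^{-tP_h/h}(\mathrm{Id}-\mathcal P)\|\leq C\e^{-ct/2}.$$
Since $\lambda^*$ is exponentially small in $1/h$ by Theorem \ref{thmToto}, we have $\lambda^*/h<c/2$ for $h$ small enough, hence $\e^{-ct/2}\leq \e^{-t\lambda^*/h}$, concluding the proof.

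The hard part will be making the uniform resolvent bound on $\mathrm{Re}\,z=ch/2$ rigorous, since a naive perturbative argument around $X_0^h$ is not directly available: $\mathrm{Re}\,z$ is itself of order $h$, comparable to $\|Q_h\|$. This is however already addressed in \cite{Robbe} with the level of precision needed here; the only new input from the present paper is the sharp value of $\lambda^*$ given by Theorem \ref{thmToto}, which upgrades the rough exponential decay rate of \cite{Robbe} into the optimal rate $\lambda^*/h$.
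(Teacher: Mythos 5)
Your overall decomposition (small-spectrum part + decaying remainder) matches the strategy the paper takes via \cite{Robbe}, and the treatment of the decaying part via a quantitative Gearhart-Pr\"uss argument is indeed what is done there. However, there is a genuine gap in the finite-dimensional part: the assertion that the restriction of $P_h$ to $\mathrm{Ran}\,\mathcal P$, having real and mutually distinct eigenvalues, is ``diagonalizable with bounded eigenprojectors $\mathcal P_{\mathbf m}$'' does not follow. Reality and simplicity of eigenvalues give diagonalizability, but say nothing about the $h$-uniform norms of the spectral projectors of a non-normal matrix; for instance a $2\times 2$ upper-triangular matrix with diagonal $(\epsilon,0)$ has real, distinct eigenvalues and yet its eigenprojector for $\epsilon$ has norm $O(1/\epsilon)$. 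Since the $\lambda(\mathbf m,h)$ are exponentially close to one another, this is precisely the dangerous regime. The resolvent estimate of Theorem \ref{thmRobbe} only controls the resolvent outside $B(0,\tilde c h)$, hence only bounds the global Riesz projector $\mathcal P$, not the individual eigenprojectors inside.

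The paper sidesteps this by invoking the PT-symmetry argument of \cite{Robbe} (Section 4), which produces a system of rank-one projectors $(\Pi_j)$ satisfying $\Pi_j\Pi_k=\delta_{jk}\Pi_j$, $\Pi_1=\Pro_1$, and crucially $\|\Pi_j\|=O(1)$ uniformly in $h$, together with the expansion $\e^{-tP_h/h}=\sum_j \e^{-t\lambda(\mathbf m_j,h)/h}\Pi_j+O(\e^{-ct})$. The uniform boundedness of the $\Pi_j$ is the nontrivial input your proof is missing; without it the bound $\|\sum_{\mathbf m\neq\underline{\mathbf m}}\e^{-t\lambda(\mathbf m,h)/h}\mathcal P_{\mathbf m}\|\leq C\e^{-t\lambda^*/h}$ cannot be concluded, because the prefactor $C$ would a priori depend on $h$ (and could blow up). You correctly flag the uniform resolvent bound on $\mathrm{Re}\,z=ch/2$ as a difficulty handled in \cite{Robbe}, but the uniform control of the individual small-spectrum eigenprojectors is a second and independent difficulty of that reference which needs to be cited or reproved, not asserted.
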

\hip
Besides, in the spirit of \cite{BonyLPMichel,me}, we also show the metastable behavior of the solutions of \eqref{Boltgene}:
\begin{cor}\label{meta}
Suppose that the assumptions of Theorem \ref{thmToto} hold true.
Let us consider some local minima $\mathbf m_1=\underline{\mathbf m}$, $\mathbf m_2$, $\dots$, $\mathbf m_p$ such that 
$$S\big(\mathtt U^{(0)}\big)=\{+\infty=S(\mathbf m_1) > S(\mathbf m_2) > \dots > S(\mathbf m_p)\}$$
for the map $S$ from Definition \ref{j et s}.
% using Lemma \ref{Pff} and the notation \eqref{lamtilde}, the sequence of approximated eigenvalues $\tilde \lambda_{\mathbf m_1}=0$, $\dots$, $\tilde \lambda_{\mathbf m_K}$ is increasing and each small eigenvalue of $P_h$ shares the classical expansion of one of the $(\tilde \lambda_{\mathbf m_k})_{1\leq k \leq K}$.
For $2\leq k \leq p$, denote $\Pro_k$ the spectral projection (which is not necessarily orthogonal) associated to the eigenvalues of $P_h$ that are $O\big(\e^{-2\frac{S(\mathbf m_k)}{h}}\big)$.
Then for any times $(t_k^\pm)_{1\leq k \leq p}$ satisfying 
$$t_p^-\geq |\ln (h^\infty)| \quad \text{and }\quad t_k^-\geq |\ln (h^\infty)|\e^{2\frac{S(\mathbf m_{k+1})}{h}}\quad \text{for }\quad k=1,\dots, p-1$$
as well as
$$t_1^+=+\infty \qquad \text{and }\quad t_k^+=O\Big(h^\infty\e^{2\frac{S(\mathbf m_{k})}{h}}\Big)\qquad \text{for }\quad k=2,\dots, p$$
one has 
$$\e^{-tP_h/h}=\Pro_k+O(h^\infty)\qquad \text{on }[t_k^-,t_k^+].$$ 
\end{cor}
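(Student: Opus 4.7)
The plan is to decompose the semigroup along the spectral projector onto the exponentially small eigenvalues of $P_h$ and to combine this with the sharp asymptotics of Theorem~\ref{thmToto}. Let
\[
\Pi^{\mathrm{small}}=\frac{1}{2i\pi}\oint_{|z|=\tilde c h}(z-P_h)^{-1}\,\D z;
\]
by Theorem~\ref{thmRobbe}, this is a rank-$n_0$ projector commuting with $P_h$, whose range carries all the small eigenvalues. Standard arguments using the resolvent estimate of Theorem~\ref{thmRobbe} (following the same reasoning as in the proof of Corollary~\ref{ral}) then give
\[
\bigl\|(\mathrm{Id}-\Pi^{\mathrm{small}})\e^{-tP_h/h}\bigr\|\lesssim\e^{-ct},
\]
which is $O(h^\infty)$ as soon as $t\geq|\ln(h^\infty)|$; this condition is implied by each of the assumed lower bounds on $t_k^-$.

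\medskip

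On the $n_0$-dimensional range of $\Pi^{\mathrm{small}}$, the operator $P_h$ reduces to a matrix with spectrum $\{\lambda(\mathbf m)\,:\,\mathbf m\in\mathtt U^{(0)}\}$ and $\lambda(\underline{\mathbf m})=0$. Theorem~\ref{thmToto} gives $\lambda(\mathbf m,h)\asymp h\,\e^{-2S(\mathbf m)/h}$, so these eigenvalues are asymptotically distinct, hence simple for small $h$. Writing $\Pi^{\mathrm{small}}=\sum_{\mathbf m}\Pro_{\{\lambda(\mathbf m)\}}$ as a sum of rank-one spectral projectors, we get
\[
\Pi^{\mathrm{small}}\e^{-tP_h/h}=\sum_{\mathbf m\in\mathtt U^{(0)}}\e^{-t\lambda(\mathbf m)/h}\,\Pro_{\{\lambda(\mathbf m)\}},
\]
and by construction $\Pro_k=\sum_{S(\mathbf m)\geq S(\mathbf m_k)}\Pro_{\{\lambda(\mathbf m)\}}$.

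\medskip

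It then suffices to check that on $[t_k^-,t_k^+]$ each factor $\e^{-t\lambda(\mathbf m)/h}$ is $1+O(h^\infty)$ if $S(\mathbf m)\geq S(\mathbf m_k)$ and $O(h^\infty)$ otherwise. If $S(\mathbf m)<S(\mathbf m_k)$, then $S(\mathbf m)\leq S(\mathbf m_{k+1})$ since $S$ takes only discrete values, so for $t\geq t_k^-$,
\[
\frac{t\,\lambda(\mathbf m)}{h}\gtrsim|\ln(h^\infty)|\,\e^{2(S(\mathbf m_{k+1})-S(\mathbf m))/h}\geq|\ln(h^\infty)|,
\]
whence $\e^{-t\lambda(\mathbf m)/h}=O(h^\infty)$. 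If on the other hand $S(\mathbf m)\geq S(\mathbf m_k)$, then for $t\leq t_k^+$,
\[
\frac{t\,\lambda(\mathbf m)}{h}\lesssim h^\infty\,\e^{2(S(\mathbf m_k)-S(\mathbf m))/h}\leq h^\infty,
\]
whence $\e^{-t\lambda(\mathbf m)/h}=1+O(h^\infty)$ (the case $\mathbf m=\underline{\mathbf m}$ being trivial since $\lambda(\underline{\mathbf m})=0$ and $t_1^+=+\infty$). Combining these with the first step yields $\e^{-tP_h/h}=\Pro_k+O(h^\infty)$ on $[t_k^-,t_k^+]$.

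\medskip

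The main obstacle is controlling the operator norms of the individual projectors $\Pro_{\{\lambda(\mathbf m)\}}$, which for a non self-adjoint operator are not a priori bounded. This is overcome by comparing each of them with an explicit rank-one operator built from the sharp gaussian quasimodes constructed in Section~\ref{sectionquasim}, which have uniformly bounded norm; any polynomial-in-$h^{-1}$ loss is then absorbed by the $h^\infty$ factors above.
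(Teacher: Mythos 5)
Your overall strategy — split the semigroup into the part outside $\mathrm{Ran}\,\Pi^{\mathrm{small}}$ controlled by a Gearhart--Pr\"uss-type estimate, then compare the exponentials $\e^{-t\lambda(\mathbf m)/h}$ with $1$ or $0$ on the windows $[t_k^-,t_k^+]$ — is the same structure as the argument the paper cites from \cite{Robbe}, and your elementary checks of the exponential factors on $[t_k^-,t_k^+]$ are correct. But two essential ingredients are not actually established, and one of them is wrong as stated.

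First, you write $\Pi^{\mathrm{small}}=\sum_{\mathbf m}\Pro_{\{\lambda(\mathbf m)\}}$ as a sum of rank-one spectral projectors on the grounds that the $\lambda(\mathbf m,h)$ are ``asymptotically distinct, hence simple for small $h$.'' Theorem~\ref{thmToto} does not give this: it yields $\lambda(\mathbf m,h)\sim h\varrho(\mathbf m)\e^{-2S(\mathbf m)/h}$, and nothing prevents two minima from having $S(\mathbf m)=S(\mathbf m')$ and $\varrho(\mathbf m)=\varrho(\mathbf m')$ (indeed the Corollary's hypotheses explicitly allow several minima to share the same $S$-value). With coincident or exponentially close eigenvalues, $P_h|_{\mathrm{Ran}\,\Pi^{\mathrm{small}}}$ could \emph{a priori} have a nontrivial Jordan structure, in which case $\e^{-tP_h/h}\Pi^{\mathrm{small}}$ is not a sum of scalar exponentials times projectors, and the whole decomposition $\sum_{\mathbf m}\e^{-t\lambda(\mathbf m)/h}\Pro_{\{\lambda(\mathbf m)\}}$ breaks down.

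Second — and this you do flag — even granting a diagonalizable restriction, the entire argument hinges on the individual spectral projectors being at most polynomially large in $h^{-1}$; for a non self-adjoint operator with exponentially clustered eigenvalues they can be exponentially large (think of a $2\times 2$ near-Jordan block). You assert this is ``overcome by comparing each of them with an explicit rank-one operator built from the sharp gaussian quasimodes,'' but this is precisely the delicate point and you do not carry it out. Controlling such a projector requires lower bounds on the angle between the quasimode and the corresponding co-eigenvector, which does not follow from the quasimode estimates alone. The paper sidesteps both issues at once by invoking the result established in \cite{Robbe} (via PT-symmetry of $P_h$ under $v\mapsto -v$ and a quantitative Gearhart--Pr\"uss argument): there exist projectors $\Pi_j$ with $\|\Pi_j\|=O(1)$, $\Pi_j\Pi_k=\delta_{jk}\Pi_j$, $\Pro_k=\sum_{S(\mathbf m_j)\geq S(\mathbf m_k)}\Pi_j$, and $\e^{-tP_h/h}=\sum_j\e^{-t\lambda(\mathbf m_j,h)/h}\Pi_j+O(\e^{-ct})$. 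That package is exactly the content your sketch leaves unproved; once it is in hand the remainder of the argument is the elementary exponential bookkeeping you do in paragraph three, adapted as in \cite{BonyLPMichel}.
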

\hip
In other words, we have shown the existence of timescales on which, during its convergence towards the global equilibrium, the solution of \eqref{Boltgene} will essentially visit the metastable spaces associated to the small eigenvalues of $P_h$.

%In fact, Theorems \ref{thmRobbe} and \ref{thmToto} and their corollaries apply for any collision operator $Q_h$ satisfying the result from Lemma \ref{hypom}.
Another perspective would then be to study the case of collision operators satisfying the local conservation laws of physics, such as the \emph{full linear Boltzmann operator}
$$Q^{FL}_h=h(\mathrm{Id}-\Pi_h^{FL})$$
with $\Pi_h^{FL}$ the orthogonal projector on the \emph{collision invariants} subspace
$$\mathrm{Vect}_{\R^d_v}\, \Big\{\e^{-\frac{v^2}{4h}},\, v_1\e^{-\frac{v^2}{4h}},\, \dots , \, v_d\e^{-\frac{v^2}{4h}},\, v^2\e^{-\frac{v^2}{4h}}\Big\}\,L^2(\R^d_x)$$
which was recently studied in \cite{CaDoHeMiMoSc} at fixed temperature.

\section{Preliminaries} \label{prelim}
\hip
From now on, the letter $r$ will denote a small universal positive constant whose value may decrease as we progress in this paper (one can think of $r$ as $1/C$).

\subsection{Naive approach}

In order to investigate a first natural approach to our problem consisting in trying to reproduce the method from \cite{me} which was itself inspired by \cite{LPMichel,BonyLPMichel}, let us make for simplicity and for this subsection only an additional assumption.
\begin{hypo}\label{naive}
%$\text{ }$\\
%\begin{enumerate}[label=\textbullet]
%\item 
The potential $V$ has exactly one saddle point $\mathbf s$. %and $V$ is quadratic near $\mathbf s$:
%$$V(x)=\frac12 \mathcal V_{\mathbf s}\,(x-\mathbf s)\cdot (x-\mathbf s)+V(\mathbf s) \qquad \text{on }B(\mathbf s,r).$$
%\end{enumerate}
\end{hypo}
\hip
Roughly speaking, this approach consists in introducing a linear form $\ell(x,v)=\ell_x\cdot(x-\mathbf s)+\ell_v\cdot v$ in the variables $(x-\mathbf s,v)$ as well as a gaussian cut-off $\theta$ which is essentially given by
$$\theta(x,v)=\int_0^{\ell(x,v)}\e^{-\frac{s^2}{2h}}\D s.$$
With the notation \eqref{W}, the idea is then to introduce the so-called gaussian quasimode
$$\varphi(x,v)=\theta(x,v)\;\e^{-\frac{W(x,v)}{h}}$$
and compute $P_h\varphi$ in order to then choose the linear form $\ell$ minimizing the norm of $P_h\varphi$.
We already know from \cite{me} (proof of Proposition 3.13) that 
\begin{align}\label{x0phi}
X_0^h\varphi(x,v)=h\,p_\ell(x,v)\e^{-\frac 1h \big(W(x,v)+\frac12\ell^2(x,v)\big)}\big(1+O(h)\big)\qquad \text{with }p_\ell=O_{L^\infty}(1),\quad |x-\mathbf s|,|v|<r.
\end{align}
It is also shown that the collision operator studied in this reference, that we denote $Q_h^{S^0}$, satisfies a similar result:
\begin{align}\label{qs0}
Q^{S^0}_h\varphi(x,v)=h\,q_\ell(x,v)\e^{-\frac 1h \big(W(x,v)+\frac12\ell^2(x,v)\big)}\big(1+O(h)\big)\qquad \text{with }q_\ell=O_{L^\infty}(1),\quad |x-\mathbf s|,|v|<r
\end{align}
and it is then sufficient in that case to choose $\ell$ so that $p_\ell=-q_\ell$.\\
In our case, although $Q_h$ may appear as a quite simple operator as it is just an orthogonal projection, in order to perform a computation similar to \eqref{qs0}, it will be more convenient to adopt a microlocal point of view.
This is the point of the two following results which are proven in Appendix \ref{qmicro}.

\begin{prop}\label{facto}
Let us denote 
\begin{align}\label{bh}
b_h=h\partial_v+v/2.
\end{align}
There exists a symbol $m_h\in S^{1/2}\big(\langle v, \eta \rangle^{-2}\big)$ given by
$$m_h(v,\eta)=2\int_0^1 (y+1)^{d-2} \e^{-\frac{y}{h}\big(\frac{v^2}{2}+2\eta^2\big)} \D y$$
such that 
$$Q_h=b_h^*\circ \mathrm{Op}_h(m_h \, \mathrm{Id})\circ b_h.$$
\end{prop}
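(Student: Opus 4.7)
The strategy is to realize $Q_h=h(\mathrm{Id}-\Pi_h)$ as the semigroup integral of the shifted harmonic-oscillator number operator $b_h^*b_h/h$, factor out $b_h^*$ and $b_h$ using a canonical commutation relation, and finally compute the Weyl symbol of the remaining scalar middle operator via the Mehler formula.

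First, I observe that $b_h\mu_h=(h\partial_v+v/2)\mu_h=0$, so $\ker b_h=\mu_h L^2(\R^d_x)$ and $\Pi_h$ is exactly the orthogonal projection onto $\ker b_h$ (acting only in the $v$ variable). A direct computation gives
\begin{equation*}
b_h^* b_h = -h^2\Delta_v + \tfrac{v^2}{4} - \tfrac{hd}{2} =: H_h - \tfrac{hd}{2},
\end{equation*}
where $H_h$ is the semiclassical harmonic oscillator on $\R^d_v$; its spectrum is $h\N$, with $\mu_h$ as sole ground-state vector. Consequently $e^{-tb_h^*b_h/h}\to \Pi_h$ strongly as $t\to\infty$, and
\begin{equation*}
Q_h = h(\mathrm{Id}-\Pi_h) = \int_0^\infty b_h^*b_h\, e^{-t b_h^*b_h/h}\,dt.
\end{equation*}

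Second, from $[(b_h)_j,(b_h)_k^*]=h\delta_{jk}$ one derives $[b_h^*b_h,b_h]=-h\,b_h$, hence by iteration $b_h\,e^{-tb_h^*b_h/h}=e^{-t}e^{-tb_h^*b_h/h}\,b_h$. Plugging this into the integral representation and using $b_h^*b_h=H_h-hd/2$, I obtain the factorization
\begin{equation*}
Q_h = b_h^* \circ M_h \circ b_h, \qquad M_h := \int_0^\infty e^{-t(1-d/2)}\,e^{-tH_h/h}\,dt,
\end{equation*}
where $M_h$ is a scalar operator on $L^2(\R^d_v)$ acting componentwise on the vector $b_hf$. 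It remains to identify the Weyl symbol of $M_h$ with $m_h$ and then to check that $m_h\in S^{1/2}(\langle v,\eta\rangle^{-2})$.

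Third, I compute $\sigma(e^{-tH_h/h})$ via the Mehler formula. The semiclassical rescaling $v=\sqrt h\,u,\ \eta=\sqrt h\,\xi$ turns $\mathrm{Op}_h$ into the standard ($h=1$) Weyl quantization and conjugates $H_h/h$ into $-\Delta_u+u^2/4$, for which the classical Mehler formula gives
\begin{equation*}
\sigma\big(e^{-tH_h/h}\big)(v,\eta) = \cosh(t/2)^{-d}\exp\!\left(-\tfrac{2\tanh(t/2)}{h}\big(\tfrac{v^2}{4}+\eta^2\big)\right).
\end{equation*}
Substituting $y=\tanh(t/2)$, with $dt=2\,dy/(1-y^2)$, $e^{-t}=(1-y)/(1+y)$, $\cosh(t/2)^{-2}=1-y^2$, and using the algebraic simplification $(1-y)^{1-d/2}(1+y)^{d/2-1}(1-y^2)^{d/2-1}=(1+y)^{d-2}$, the time integral collapses to
\begin{equation*}
m_h(v,\eta) = 2\int_0^1 (1+y)^{d-2}\exp\!\left(-\tfrac{y}{h}\big(\tfrac{v^2}{2}+2\eta^2\big)\right)dy,
\end{equation*}
as desired.

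Finally, I verify $m_h\in S^{1/2}(\langle v,\eta\rangle^{-2})$ by splitting along $\phi:=v^2/2+2\eta^2$. In the regime $\phi\lesssim h$, $|(v,\eta)|\lesssim h^{1/2}$ and $e^{-y\phi/h}\asymp 1$, so each $v$- or $\eta$-derivative brings down a factor $y(v,\eta)/h$ of size $h^{-1/2}$. In the regime $\phi\gtrsim h$, I use $\int_0^1 y^k e^{-y\phi/h}\,dy\lesssim (h/\phi)^{k+1}$ to bound $|\partial^\alpha m_h|\lesssim h/\phi^{|\alpha|/2+1}\lesssim h^{-|\alpha|/2}\langle v,\eta\rangle^{-2}$. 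The main obstacle will be the bookkeeping in these regime-by-regime estimates and the careful tracking of $\sqrt h$ factors through the semiclassical rescaling in the Mehler step; conceptually, however, everything reduces to recognizing $Q_h$ as the \emph{identity minus ground-state projector} for the number operator $b_h^*b_h$, and the symbol $m_h$ then appears as a ``resolvent-like'' integral of the oscillator heat kernel.
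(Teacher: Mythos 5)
Your proof is correct but takes a genuinely different route from the paper. The paper derives $m_h$ by solving the symbolic identity $(-i\eta^t+v^t/2)\#(m_h\,\mathrm{Id})\#(i\eta+v/2)=h(1-\varpi_h)$ — it expands the Moyal products (the factors being degree-one polynomials), makes the exponential ansatz $m_h=u_h\,\e^{(v^2+4\eta^2)/2h}$, reduces to a Poisson-type equation for $u_h$, and solves by semiclassical Fourier transform; the computation of the symbol $\varpi_h$ of $\Pi_h$ from its kernel is a needed preliminary. You instead recognize $\Pi_h$ as the ground-state projector of the number operator $b_h^*b_h$, with spectrum $h\N$, write $Q_h=\int_0^\infty b_h^*b_h\,\e^{-tb_h^*b_h/h}\D t$, use the commutation-derived intertwining $b_h\,\e^{-tb_h^*b_h/h}=\e^{-t}\,\e^{-tb_h^*b_h/h}\,b_h$ to peel off $b_h^*$ and $b_h$, and then compute the Weyl symbol of the scalar middle operator via Mehler's formula and the substitution $y=\tanh(t/2)$. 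The commutation relations, the Mehler symbol $\cosh(t/2)^{-d}\exp\big(-\tfrac{2\tanh(t/2)}{h}(\tfrac{v^2}{4}+\eta^2)\big)$, and the algebraic collapse of the prefactor to $2(1+y)^{d-2}$ all check out, as does the integrability of the $t$-integral for every $d$ (the factor $\e^{-t(1-d/2)}$ is compensated by $\|\e^{-tH_h/h}\|=\e^{-td/2}$). Your approach is conceptually more transparent — it explains \emph{why} the factorization exists (it is the "identity minus ground-state projector" for the oscillator number operator) and reveals $m_h$ as a resolvent-like heat-kernel integral — at the cost of invoking Mehler's formula; the paper's approach is more elementary and self-contained but presents $m_h$ as the solution to a somewhat unmotivated symbolic PDE. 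Your verification that $m_h\in S^{1/2}(\langle(v,\eta)\rangle^{-2})$ by splitting along $\phi=v^2/2+2\eta^2$ is correct in spirit and equivalent in content to the paper's rescale-to-$S^0$-and-integrate-by-parts argument; for a fully rigorous write-up you would want to spell out the bookkeeping of the powers of $y$, $(v,\eta)$, and $h^{-1}$ produced by $\partial^\alpha$ and to treat the intermediate regime $h\lesssim\phi\lesssim 1$ explicitly, but this is routine.
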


\begin{cor}\label{opgb}
One has
$$Q_h= \mathrm{Op}_h(g_h )\circ b_h$$
with 
$$g_h(v,\eta)=\int_0^1 (y+1)^{d-1} \e^{-\frac{y}{h}\big(\frac{v^2}{2}+2\eta^2\big)} \D y\,(-2i\eta^t+v^t)\in S^{1/2}\big(\langle v, \eta \rangle^{-1}\big).$$
\end{cor}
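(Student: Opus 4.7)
The plan is to apply Proposition \ref{facto} and compute the composition $b_h^* \circ \mathrm{Op}_h(m_h\,\mathrm{Id})$ as a single pseudo-differential operator via the Weyl symbolic calculus. Since $b_h = h\partial_v + v/2$ is the Weyl quantization of $i\eta + v/2$, its adjoint $b_h^*$ is the quantization of the symbol $-i\eta + v/2$. The decisive structural point is that this symbol is polynomial of degree one in $(v,\eta)$: the Moyal expansion $\sigma(b_h^*)\,\#_h\,m_h$ truncates exactly after the first-order correction, so no remainder needs to be estimated and the composition reduces to an explicit algebraic computation.

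Spelling this out componentwise,
$$(-i\eta + v/2)\,\#_h\,m_h = (-i\eta + v/2)\,m_h + \frac{h}{2i}\bigl\{-i\eta + v/2,\,m_h\bigr\} = (-i\eta + v/2)\,m_h - \frac{h}{2}\partial_v m_h + \frac{ih}{4}\partial_\eta m_h.$$
Using the explicit expression of $m_h$ from Proposition \ref{facto}, each derivative of the exponential produces a factor $-y/h$ multiplied by $v$ or $4\eta$, which exactly compensates the $h$ in front of the two correction terms. After collecting, the full expression takes the form
$$(v - 2i\eta)\int_0^1 (y+1)^{d-2}(1 + y)\,e^{-\frac{y}{h}\left(\frac{v^2}{2}+2\eta^2\right)}\,dy,$$
and the elementary identity $(y+1)^{d-2}(1+y) = (y+1)^{d-1}$ yields exactly the stated expression for $g_h$. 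The corollary then follows by right composition with $b_h$.

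For the membership $g_h \in S^{1/2}(\langle v,\eta\rangle^{-1})$, the change of variable $t = y(\tfrac{v^2}{2} + 2\eta^2)/h$ gives the uniform estimate $\int_0^1 e^{-y|\mathbf{w}|^2/h}\,dy \lesssim \min(1, h/|\mathbf{w}|^2)$ with $|\mathbf{w}|^2 = \tfrac{v^2}{2} + 2\eta^2$, so the linear prefactor $(v - 2i\eta)$ is tamed to a decay of order $\langle v,\eta\rangle^{-1}$. Each additional derivative falls either on the prefactor (which can only improve the estimate) or on the exponential, where it produces a factor of $y/h$ times $v$ or $\eta$, costing at most $h^{-1/2}$ in the worst regime, in agreement with $\kappa = 1/2$.

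The main difficulty is not analytical but purely bookkeeping: keeping signs straight in the Moyal formula and spotting the algebraic collapse $(y+1)^{d-2} + y\,(y+1)^{d-2} = (y+1)^{d-1}$ that allows the leading term and the first-order correction to merge into a single integral with weight $(y+1)^{d-1}$. Once this identity is isolated, everything falls into place.
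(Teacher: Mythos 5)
Your proof is correct and follows essentially the same route as the paper's: both apply Proposition \ref{facto}, use that $b_h^*$ has Weyl symbol $-i\eta + v/2$ which is affine so the Moyal expansion truncates at first order, compute $-\tfrac h2\partial_v m_h + \tfrac{ih}{4}\partial_\eta m_h$ explicitly, and observe the collapse $(y+1)^{d-2}(1+y)=(y+1)^{d-1}$. Your added verification that $g_h\in S^{1/2}(\langle v,\eta\rangle^{-1})$ is a small bonus the paper leaves implicit (it follows from $m_h\in S^{1/2}(\langle v,\eta\rangle^{-2})$ established in Proposition \ref{facto} and symbolic calculus), but does not change the substance.
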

\hip
We are now in position to establish the following fundamental computation which shows that the balancing obtained between $X_0^h\varphi$ and $Q_h^{S^0}\varphi$ cannot happen between $X_0^h\varphi$ and $Q_h\varphi$.
This will motivate the introduction of some new quasimodes later on.
\begin{lem}\label{actionq}
Assume for simplicity that Hypothesis \ref{naive} holds true and let $\ell$ a linear form in the variables $(x-\mathbf s,v)$.
We have%for $|x-\mathbf s|<r$ and $|v|<r$
%$$Q_h\varphi(x,v)=h\int_0^1 \frac{(1-y)\ell_v^2 \ell_x\cdot(x-\mathbf s)+(1+y)(1+\ell_v^2)\ell_v\cdot v}{\big(4y\ell_v^2+(y+1)^2\big)^{3/2}}\e^{-\frac{W(x,v)+\frac12 L_y^2(x,v)}{h}}\,\D y$$
$$Q_h\varphi(x,v)
%\Big(1+O(h)\Big)
=-h\int_0^1 \partial_y(L_y)\,\e^{-\frac{W(x,v)+\frac12 L_y^2(x,v)}{h}}\,\D y\,\cdot \begin{pmatrix}x-\mathbf s\\v\end{pmatrix}$$
where with a slight abuse of notations, $L_y$ denotes both the linear form
$$L_y(x,v)=\frac{(1+y) \ell_x\cdot(x-\mathbf s)+(1-y)\ell_v\cdot v}{\Big(4y\ell_v^2+(y+1)^2\Big)^{1/2}}$$
and the vector representing it.
Moreover, denoting 
\begin{align}\label{my}
m_{y,h}(v,\eta)=2 (y+1)^{d-2} \e^{-\frac{y}{h}\big(\frac{v^2}{2}+2\eta^2\big)},
\end{align}
we have
\begin{align}\label{opm}
\mathrm{Op}_h(m_{y,h}) \circ b_h \varphi(x,v)&=2h(2\pi h)^{-d/2}\e^{-\frac{V(x)}{2h}} \frac{(y+1)^{d-2}}{(4y)^{\frac d2}}\\
		&\qquad \qquad \qquad \qquad\times \int_{v'\in \R^d} \e^{-\frac 1h \big(\frac{v'^2}{4}+\frac y8 (v+v')^2+\frac{(v-v')^2}{8y}+\frac12\ell^2(x,v')\big)} \,\D v' \, \ell_v.
\end{align}
\end{lem}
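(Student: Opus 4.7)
My plan is to use Proposition~\ref{facto} to decompose $Q_h = b_h^* \circ \int_0^1 \mathrm{Op}_h(m_{y,h}) \, \D y \circ b_h$ and compute the action on $\varphi$ in three stages: first apply $b_h$ to $\varphi$, then $\mathrm{Op}_h(m_{y,h})$ (which yields \eqref{opm}), and finally $b_h^*$ followed by integration in $y$.

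For the first two stages, note that $\partial_v W = v/2$ and $\partial_v \theta = \e^{-\ell^2/(2h)}\,\ell_v$, so the $v/2$ contribution in $b_h = h\partial_v + v/2$ cancels the one coming from $h\partial_v \e^{-W/h}$, leaving
\begin{equation*}
b_h \varphi = h\, \ell_v\, \e^{-(W + \ell^2/2)/h}.
\end{equation*}
Substituting this into the Weyl formula for $\mathrm{Op}_h(m_{y,h})$ produces a double integral over $(v',\eta)$; the $\eta$-integration is the standard Gaussian $\int \e^{i(v-v')\cdot\eta/h - 2y\eta^2/h}\,\D\eta = (\pi h/(2y))^{d/2} \e^{-(v-v')^2/(8yh)}$, and collecting constants yields \eqref{opm} directly.

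For the first formula I still need to carry out the $v'$-Gaussian, apply $b_h^*$, and integrate in $y$. The exponent
\begin{equation*}
F_y(v,v') = \tfrac{v'^2}{4} + \tfrac{y}{8}(v+v')^2 + \tfrac{1}{8y}(v-v')^2 + \tfrac{1}{2}\ell(x,v')^2
\end{equation*}
is quadratic in $v'$ with Hessian $M_y = a\, I + \ell_v \otimes \ell_v$, where $a = (y+1)^2/(4y)$. Using the Sherman--Morrison formula for $M_y^{-1}$ and the matrix determinant lemma giving $\det M_y = a^{d-1}(a+|\ell_v|^2)$, I complete the square in $v'$ and check coefficient by coefficient (of $\beta^2$, $\beta\, \ell_v \cdot v$, $(\ell_v \cdot v)^2$ and $|v|^2$, where $\beta = \ell_x \cdot (x-\mathbf s)$) that the minimum of $F_y$ in $v'$ equals $v^2/4 + L_y^2(x,v)/2$ for the $L_y$ of the statement: the denominator $D_y := (4y|\ell_v|^2 + (y+1)^2)^{1/2}$ arises from the identity $a + |\ell_v|^2 = D_y^2/(4y)$, and the numerator $(1+y)\ell_x\cdot(x-\mathbf s) + (1-y)\ell_v\cdot v$ from the completion of the square. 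Collecting prefactors, the combination $2h(2\pi h)^{-d/2}(y+1)^{d-2}(4y)^{-d/2} \cdot (2\pi h)^{d/2}(\det M_y)^{-1/2}$ simplifies to $2h/[(1+y)D_y]$, so that
\begin{equation*}
\mathrm{Op}_h(m_{y,h})\, b_h \varphi(x,v) = \Lambda_y\, \e^{-(W + L_y^2/2)/h}\, \ell_v, \qquad \Lambda_y = \frac{2h}{(1+y) D_y}.
\end{equation*}

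Finally, applying $b_h^* = -h\, \partial_v\cdot{} + v/2 \cdot{}$ to this vector-valued function and using $\partial_v(W + L_y^2/2) = v/2 + L_y b_y\, \ell_v$ with $b_y = (1-y)/D_y$, a short computation gives
\begin{equation*}
b_h^*\big(\Lambda_y\, \e^{-(W + L_y^2/2)/h}\, \ell_v\big) = \Lambda_y \big(\ell_v \cdot v + L_y b_y |\ell_v|^2\big)\, \e^{-(W + L_y^2/2)/h}.
\end{equation*}
Writing $a_y = (1+y)/D_y$ and differentiating the explicit formulas for $a_y$, $b_y$ in $y$, one checks the pointwise identity
\begin{equation*}
\Lambda_y \big(\ell_v \cdot v + L_y b_y |\ell_v|^2\big) = -h\, \partial_y L_y(x - \mathbf s, v)
\end{equation*}
by matching the coefficients of $\ell_v \cdot v$ and of $\ell_x \cdot (x-\mathbf s)$ separately; both reduce to the same expression $\Lambda_y = 2h/[(1+y)D_y]$. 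Integrating in $y$ then yields the announced formula. The main obstacle is the algebraic work of the third paragraph: the precise form of $L_y$ with its specific denominator $D_y$ has to emerge cleanly from the Sherman--Morrison inversion, and the delicate cancellation of the $|v|^2$ contributions between $d_y$ and $\tfrac{1}{2}c_y^T M_y^{-1} c_y$ (leaving exactly $v^2/4$) is the algebraic pivot on which the whole statement rests.
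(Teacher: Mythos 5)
Your proof is correct and takes a genuinely different route from the paper's. The paper starts from Corollary~\ref{opgb}, i.e.\ from $Q_h=\mathrm{Op}_h(g_h)\circ b_h$, where the symbol $g_h$ already carries the factor $(-2i\eta^t+v^t)$ coming from $b_h^*$; after computing $b_h\varphi=h\,\ell_v\,\e^{-(W+\ell^2/2)/h}$ it performs the $\eta$-integral (producing a polynomial factor $(v+v')y+v-v'$) and then the $v'$-Gaussian with that polynomial prefactor, extracting $-\tfrac12\partial_y L_y$ through \eqref{dyl}. You instead use Proposition~\ref{facto} in its raw form $Q_h=b_h^*\circ\int_0^1\mathrm{Op}_h(m_{y,h})\,\D y\circ b_h$ and carry out the three stages in order: $b_h$, then $\mathrm{Op}_h(m_{y,h})$ (this directly yields \eqref{opm} — the paper obtains \eqref{opm} only as an afterthought, ``a straightforward adaptation'' of its $g_h$-computation), then the pure $v'$-Gaussian via Sherman--Morrison and the matrix determinant lemma, and only then apply the first-order differential operator $b_h^*$. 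The final coefficient-matching identity $\Lambda_y(\ell_v\cdot v + L_y b_y|\ell_v|^2)=-h\,\partial_y L_y\cdot(x-\mathbf s,v)$ is exactly right — I checked that both the $\ell_x\cdot(x-\mathbf s)$ coefficient and the $\ell_v\cdot v$ coefficient reduce to $\tfrac{2h(1-y)\ell_v^2}{D_y^3}$ and $\tfrac{2h(1+y)(1+\ell_v^2)}{D_y^3}$ on each side, matching $-h\partial_y L_{y,x}$ and $-h\partial_y L_{y,v}$. The trade-off is that your version keeps the Gaussian integrations polynomial-free at the cost of one extra differentiation step, and it has the mild bonus of producing \eqref{opm} as a natural waypoint rather than a side remark.
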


\begin{proof}
%First notice that $Q_h$ is local in $x$. Now for $x\in B(\mathbf s,r)$ and $v\in\R^d$, we have 
According to Corollary \ref{opgb}, we have
\begin{align}\label{opgphaseell}
Q_h\varphi(x,v)&=\mathrm{Op}_h(g_h)\Big[h\partial_v\theta \e^{-W/h}\Big](x,v)\nonumber\\
	&=h\mathrm{Op}_h(g_h)\Big[\e^{-\frac 1h \big(W+\frac12\ell^2\big)}\ell_v\Big](x,v)\\
	&=h(2\pi h)^{-d}\int_{v'\in \R^d} \int_{\eta\in\R^d} \e^{\frac ih (v-v')\cdot\eta} g_h\Big(\frac{v+v'}{2},\eta\Big) \e^{-\frac 1h \big(W(x,v')+\frac12\ell^2(x,v')\big)} \,\D v' \D\eta\, \ell_v.\nonumber
\end{align}
Let us now compute the integral in $\eta$ with the expression of $g_h$ from Corollary \ref{opgb}:
\begin{align}
\int_{\eta\in \R^d} \e^{\frac{i}{h}(v-v')\cdot\eta} g_h\Big(\frac{v+v'}{2},\eta\Big) \D \eta=\int_0^1 (y+1)^{d-1} \e^{-\frac{y(v+v')^2}{8h}} \bigg[\frac{(v+v')^t}{2} \int_{\eta\in \R^d} \e^{\frac{i}{h}(v-v')\cdot\eta} \e^{-\frac{2y\eta^2}{h}} \D \eta\nonumber\\
		-2i\int_{\eta\in \R^d} \eta^t \e^{\frac{i}{h}(v-v')\cdot\eta} \e^{-\frac{2y\eta^2}{h}}\D\eta\bigg] \D y\\%\int_{\eta\in \R^d} \e^{\frac{i}{h}(v-v')\cdot\eta} \e^{-\frac{2y\eta^2}{h}} \Big(-2i\eta^t+\frac{v^t}{2}+\frac{v'^t}{2}\Big) \D y
	=\int_0^1 (y+1)^{d-1} \e^{-\frac{y(v+v')^2}{8h}} \bigg[\frac{(v+v')^t}{2}+\frac{(v-v')^t}{2y}\bigg] \int_{\eta\in \R^d} \e^{\frac{i}{h}(v-v')\cdot\eta} \e^{-\frac{2y\eta^2}{h}} \D \eta \D y\\
	=2(2\pi h)^{d/2}\int_0^1 \frac{(y+1)^{d-1}}{(4y)^{\frac d2 +1}}  \Big((v+v')y+v-v'\Big)^t \e^{-\frac{1}{8h}\big(y(v+v')^2+\frac{(v-v')^2}{y}\big)} \D y.
\end{align}
Hence, %using Hypothesis \ref{naive} and denoting $x_{\mathbf s}=x-\mathbf s$, 
we get
\begin{align}\label{intv'}
Q_h\varphi(x,v)&=2h(2\pi h)^{-d/2}\e^{-\frac{V(x)}{2h}}\int_0^1 \frac{(y+1)^{d-1}}{(4y)^{\frac d2 +1}}\int_{v'\in \R^d}\Big((v+v')y+v-v'\Big)\\
		&\qquad \qquad \qquad \qquad\qquad \qquad\times  \e^{-\frac 1h \big(\frac{v'^2}{4}+\frac y8 (v+v')^2+\frac{(v-v')^2}{8y}+\frac12\ell^2(x,v')\big)} \,\D v' \D y\cdot \ell_v
\end{align}
and \eqref{opm} is now a straightforward adaptation of \eqref{intv'} with $m_{y,h}$ instead of $g_h$.
Denoting $x_{\mathbf s}=x-\mathbf s$,
$$M_y=\frac12\mathrm{Id} +\ell_v\ell_v^t+\frac{y^2+1}{4y}\mathrm{Id}\qquad \text{and}\qquad u_y(x_{\mathbf s},v)=\ell_x\cdot x_{\mathbf s}\,\ell_v+\frac{y^2-1}{4y}v,$$
\eqref{intv'} becomes by the change of variables $w=v'+M_y^{-1}u_y(x_{\mathbf s},v)$
\begin{align}\label{qphi}
Q_h\varphi(x,v)&=2h(2\pi h)^{-d/2}\e^{-\frac{V(x)}{2h}} \int_0^1 \frac{(y+1)^{d-1}}{(4y)^{\frac d2 +1}}\nonumber\\  
	&\qquad \times\exp\bigg[\frac{-1}{2h}\Big(\ell_x\ell_x^t x_{\mathbf s}\cdot x_{\mathbf s}+\frac{y^2+1}{4y} v^2-M_y^{-1} u_y(x_{\mathbf s},v)\cdot u_y(x_{\mathbf s},v)\Big)\bigg]\nonumber\\
	&\qquad  \times \int_{w\in \R^d} \Big[\Big(v-M_y^{-1}u_y(x_{\mathbf s},v)\Big)y+v+M_y^{-1}u_y(x_{\mathbf s},v)
	%+\color{red}O(w^2)\color{black}
	\Big]\e^{-\frac{M_y w\cdot w}{2h}}\D w  \D y\cdot \ell_v\nonumber\\
	&=2h\e^{-\frac{V(x)}{2h}} \int_0^1 \frac{(y+1)^{d-1}}{(4y)^{\frac d2 +1}}\det(M_y)^{-1/2}\Big((1+y)v+(1-y)M_y^{-1}u_y(x_{\mathbf s},v)\Big)\cdot \ell_v\\
	&\qquad\times\exp\bigg[\frac{-1}{2h}\Big(\ell_x\ell_x^t x_{\mathbf s}\cdot x_{\mathbf s}+\frac{y^2+1}{4y} v^2-M_y^{-1} u_y(x_{\mathbf s},v)\cdot u_y(x_{\mathbf s},v)\Big)\bigg]\,\D y
	%\color{red}\Big(1+O(h)\Big)\; \text{pareil énoncé}\color{black}\nonumber.
\end{align}
Now 
$$\frac{(y+1)^{d-1}}{(4y)^{\frac d2 +1}}\det(M_y)^{-1/2}=\frac{1}{4y\Big(4y\ell_v^2+(y+1)^2\Big)^{1/2}}$$
while 
\begin{align}\label{Mlv}
 M_y^{-1}\ell_v=\frac{4y}{4y\ell_v^2+(y+1)^2}\,\ell_v
\end{align}
so the prefactor in the integral from \eqref{qphi} becomes
$$\frac{1}{4y\Big(4y\ell_v^2+(y+1)^2\Big)^{1/2}}\bigg[\frac{4y(1-y)\ell_v^2}{4y\ell_v^2+(y+1)^2}\ell_x\cdot x_{\mathbf s}+\bigg((1+y)+\frac{(1-y)(y^2-1)}{4y\ell_v^2+(y+1)^2}\bigg)\ell_v\cdot v\bigg]$$
which is further equal to
\begin{align}\label{dyl}
\frac{(1-y)\ell_v^2 \ell_x\cdot x_{\mathbf s}+(1+y)(1+\ell_v^2)\ell_v\cdot v}{\Big(4y\ell_v^2+(y+1)^2\Big)^{3/2}}=-\frac12 \partial_y(L_y) \cdot \begin{pmatrix}x_{\mathbf s}\\v\end{pmatrix}.
\end{align}
Thus, it only remains to show that the exponentials coincide, i.e
$$\ell_x\ell_x^t x_{\mathbf s}\cdot x_{\mathbf s}+\frac{y^2+1}{4y} v^2-M_y^{-1} u_y(x_{\mathbf s},v)\cdot u_y(x_{\mathbf s},v)=\frac{v^2}{2}+L_y^2(x,v)$$
or equivalently
\begin{align}\label{egalphase}
\ell_x\ell_x^t x_{\mathbf s}\cdot x_{\mathbf s}+\frac{(y-1)^2}{4y} v^2-M_y^{-1} u_y(x_{\mathbf s},v)\cdot u_y(x_{\mathbf s},v)=%\frac{(1+y)^2(\ell_x\cdot x_{\mathbf s})^2+2(1-y^2)\ell_x\cdot x_{\mathbf s} \ell_v\cdot v+(1-y)^2(\ell_v\cdot v)^2}{4y\ell_v^2+(y+1)^2}
\frac{\Big((1+y)\ell_x\cdot x_{\mathbf s}+(1-y)\ell_v\cdot v\Big)^2}{4y\ell_v^2+(y+1)^2}.
\end{align}
Using \eqref{Mlv}, we already obtain 
$$M_y^{-1} u_y(x_{\mathbf s},v)\cdot u_y(x_{\mathbf s},v)=\frac{4y\ell_v^2}{4y\ell_v^2+(y+1)^2}\ell_x\ell_x^t x_{\mathbf s}\cdot x_{\mathbf s}+2\frac{y^2-1}{4y\ell_v^2+(y+1)^2}\ell_x\cdot x_{\mathbf s}\ell_v\cdot v+\frac{(y^2-1)^2}{16y^2}M_y^{-1}v\cdot v$$
so the LHS of \eqref{egalphase} becomes
\begin{align}\label{lhs}
\frac{(1+y)^2}{4y\ell_v^2+(y+1)^2}(\ell_x\cdot x_{\mathbf s})^2+2\frac{1-y^2}{4y\ell_v^2+(y+1)^2}\ell_x\cdot x_{\mathbf s}\ell_v\cdot v+\Big(\frac{(y-1)^2}{4y}-\frac{(y^2-1)^2}{16y^2}M_y^{-1}\Big)v\cdot v.
\end{align}
Finally, still using \eqref{Mlv}, one can easily check that
$$\frac{(y-1)^2}{4y}-\frac{(y^2-1)^2}{16y^2}M_y^{-1}=\frac{(1-y)^2}{4y\ell_v^2+(y+1)^2}\ell_v\ell_v^t$$
so \eqref{lhs} equals the RHS of \eqref{egalphase} and the proof is complete.
\end{proof}

This result shows that unlike in the case of some $S^0$ collisions operators as studied in \cite{me} (or even in the case of differential operators \cite{LPMichel,BonyLPMichel}), here the action of $Q_h$ on the quasimode $\varphi$ does not yield a precise exponential, but rather a superposition of exponentials with the linear form in the phase varying.
This suggests the introduction of some new quasimodes given by a superposition of functions similar to $\varphi$ with the linear form varying.

\subsection{Labeling of the potential minima}
%\subsection{Topology of the potential}\label{topo}
We now drop Hypothesis \ref{naive}.
Before we can construct our quasimodes, we need to recall the general labeling of the minima which originates from \cite{BoGaKl,HeKlNi} and was generalized in \cite{HHS11}, as well as the topological constructions that go with it.
Here we only introduce the essential objects and omit the proofs.
For more details, we refer to \cite{me} where it is in particular shown that, roughly speaking, the constructions for the potential $V/2$ are the projections on $\R^d_x$ of the ones for the global potential $W$.
Recall that we denote 
\begin{align}\label{UkY}
\mathtt U^{(k)} \text{ the critical points of }V \text{ of index }k.
\end{align}
For shortness, we will write \og CC \fg{} instead of \og connected component \fg{}.
The constructions rely on the following fundamental observation which is an easy consequence of the Morse Lemma (see for instance \cite{me}, Lemma 3.1 for a proof):
\begin{lem}\label{1.4}
If $x\in \mathtt U^{(1)}$, then there exists $r_0>0$ such that for all $0<r<r_0$, $(x,0)$ has a connected neighborhood $\mathcal O_r$ in $B_0(x,r)$ such that $\mathcal O_r\cap \{W< W(x,0)\}$ has exactly 2 CCs.\\
\end{lem}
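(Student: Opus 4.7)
The plan is to recognise $(x,0)$ as a Morse critical point of the global potential $W$ of index exactly $1$, and then apply the classical Morse Lemma to put $W$ into normal form and read off the connected components.

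First I would check the Morse data at $(x,0)$. Since $x\in\mathtt U^{(1)}$, we have $\nabla V(x)=0$ and the Hessian $\mathcal V_x$ admits exactly one negative eigenvalue $\tau_x$, all others being positive. Because $W(y,v)=V(y)/2+v^2/4$, the gradient $\nabla W=\tfrac12(\nabla V,v)$ vanishes at $(x,0)$, and the Hessian
$$\mathcal W_x=\begin{pmatrix}\tfrac12\mathcal V_x & 0\\ 0 & \tfrac12\mathrm{Id}_d\end{pmatrix}$$
is non-degenerate with exactly one negative eigenvalue (namely $\tau_x/2$). Hence $(x,0)$ is a Morse critical point of $W$ of index $1$, and the Morse Lemma produces a neighbourhood $U$ of $(x,0)$, a neighbourhood $\widetilde U$ of $0$ in $\R^{2d}$ and a smooth diffeomorphism $\Phi:U\to\widetilde U$ with $\Phi(x,0)=0$ and
$$(W\circ\Phi^{-1})(\xi)-W(x,0)=-\xi_1^2+\xi_2^2+\cdots+\xi_{2d}^2.$$

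Next I would analyse the sublevel set in Morse coordinates. Writing $\xi'=(\xi_2,\ldots,\xi_{2d})$, the set $\Phi\bigl(\{W<W(x,0)\}\cap U\bigr)$ is the double cone $\{\xi_1^2>|\xi'|^2\}=\mathcal C_+\sqcup\mathcal C_-$ with $\mathcal C_\pm=\{\pm\xi_1>|\xi'|\}$. Each $\mathcal C_\pm$ is convex (if $\pm\xi_1,\pm\xi'_1>|\xi'|,|\xi''|$, the triangle inequality gives $\pm(\xi_1+\xi''_1)/2>|(\xi'+\xi'')/2|$), hence path-connected and open.

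Finally, I would choose the neighbourhood $\mathcal O_r$. Fix $r_0>0$ small enough that $B_0(x,r_0)\subset U$; for $0<r<r_0$, continuity of $\Phi^{-1}$ provides $\rho=\rho(r)>0$ with $\Phi^{-1}(B(0,\rho))\subset B_0(x,r)$, and we set $\mathcal O_r:=\Phi^{-1}(B(0,\rho))$. Then $\mathcal O_r$ is an open connected neighbourhood of $(x,0)$ inside $B_0(x,r)$, and
$$\mathcal O_r\cap\{W<W(x,0)\}=\Phi^{-1}\bigl(B(0,\rho)\cap\mathcal C_+\bigr)\sqcup\Phi^{-1}\bigl(B(0,\rho)\cap\mathcal C_-\bigr),$$
where each $B(0,\rho)\cap\mathcal C_\pm$ is the nonempty intersection of two convex sets and is therefore connected. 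This gives exactly two connected components, as claimed. The argument is essentially textbook and there is no real obstacle; the only substantive observation is that adding the positive-definite quadratic $v^2/4$ does not disturb the index of the saddle, so the Morse normal form still has a unique negative square and the local geometry near $(x,0)$ is the same as near a standard index-$1$ critical point.
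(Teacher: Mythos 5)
Your proof is correct and takes the same route the paper indicates: the paper describes the lemma as "an easy consequence of the Morse Lemma" and defers the details to \cite{me}, Lemma 3.1, which is precisely what you carry out — computing that $(x,0)$ is a non-degenerate index-$1$ critical point of $W$, passing to Morse normal form, and reading off the two convex half-cones of the sublevel set. The only blemish is a notational collision near the convexity argument (you reuse $\xi'$ both for the tail variables and for a second point), but the underlying triangle-inequality argument is sound.
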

\hip
It motivates the following definition:
\begin{defi}\label{ssv}
\begin{enumerate}
\item We say that $x\in \mathtt U^{(1)}$ is a separating saddle point and we denote $x\in \mathtt V^{(1)}$ if for every $r>0$ small enough, the two CCs of $\mathcal O_r\cap \{W< W(x,0)\}$ are contained in different CCs of $\{W< W(x,0)\}$.
\item We say that $\sigma \in \R$ is a separating saddle value if $\sigma \in \frac V2(\mathtt V^{(1)})$.
%\item Finally, we say that a set $E\subset \R^{d'}$ is critical if there exists $\sigma \in Y(\mathtt V^{(1),Y})$ such that $E$ is a CC of $\{Y< \sigma\}$ satisfying $\partial E \cap \mathtt V^{(1),Y}\neq \emptyset$.
\end{enumerate}
\end{defi}

It is known (see for instance \cite{me}, Lemma 3.4) that $\mathtt V^{(1)}\neq \emptyset$ since $n_0\geq 2$. Let us then denote $\sigma_2>\dots>\sigma_N$ where $N\geq 2$ the different separating saddle values of $V/2$ and for convenience we set $\sigma_1=+\infty$.
For $\sigma \in \R\cup\{+\infty\}$, let us denote $\mathcal C_\sigma$ the set of all the CCs of $\{W<\sigma\}$.
We call \emph{labeling} of the minima of $V$ any injection $\mathtt U^{(0)}\to \llbracket 1,N \rrbracket \times \N^*$ which we denote for shortness $(\mathbf m_ {k,j})_{k,j}$.
Given a labeling $(\mathbf m_{k,j})_{k,j}$ of the minima of $V$, we denote for $k\in \llbracket 1,N \rrbracket$ 
$$\mathtt U^{(0)}_k=\big\{\mathbf m_{k',j}\,;\, 1\leq k'\leq k\big\}\cap \Big\{\frac V2<\sigma_k\Big\}$$
and we say that the labeling is \emph{adapted} to the separating saddle values if for all $k\in \llbracket 1,N \rrbracket$, each $\mathbf m_{k,j}$ is a global minimum of $V$ restricted to some CC of $\{V/2<\sigma_k\}$ and the map 
\begin{align}\label{TkY}
T_k: \mathtt U^{(0)}_k \to \mathcal C_ {\sigma_k}
\end{align}
sending $\mathbf m \in \mathtt U^{(0)}_k$ on the element of $\mathcal C_ {\sigma_k}$ containing $(\mathbf m,0)$ is bijective.
In particular, it implies that each $\mathbf m_{k,j}$ belongs to $\mathtt U^{(0)}_k$.
Such labelings exist, one can for instance easily check that the usual labeling procedure presented in \cite{HHS11} is adapted to the separating saddle values.
From now on, we fix a labeling $(\mathbf m_{k,j})_ {k,j}$ adapted to the separating saddle values of $V$.

\begin{defi}\label{j et s}
Recall the notation \eqref{UkY} and Definition \ref{ssv}.
We define the following mappings:
\begin{enumerate}[label=\textbullet]
\item $E: \mathtt U^{(0)}\xrightarrow{{}\quad{}} \mathcal P(\R^{2d})$\\
	${} \; \mathbf m_{k,j} \xmapsto{{}\quad{}} T_k(\mathbf m_{k,j})$\\
where $T_k$ is the map defined in \eqref{TkY}.
\item 
$\mathbf j^W:\mathtt U^{(0)}\to \mathcal P\Big(\big(\mathtt V^{(1)}\cup \{\mathbf s_1\}\big) \times \{0\}\Big)$\\
given by $\mathbf j^W(\mathbf m_{1,1})=(\mathbf s_1,0)$ where $\mathbf s_1$ is a fictive saddle point such that $V(\mathbf s_1)=\sigma_1=+\infty$; and for $2\leq k\leq N$, $\mathbf j^W(\mathbf m_{k,j})=\partial E(\mathbf m_{k,j})\cap \big(\mathtt V^{(1)}\times \{0\}\big)$ which is not empty (see for instance Lemma 3.5 from \cite{me}), finite and included in $\{W=\sigma_k\}$.
\item 
$\mathbf j:\mathtt U^{(0)}\to \mathcal P\big(\mathtt V^{(1)}\cup \{\mathbf s_1\}\big)$\\
such that $\mathbf j(\mathbf m)\times\{0\}=\mathbf j^W(\mathbf m)$.
\item
$\boldsymbol \sigma:\mathtt U^{(0)}\to \frac V2(\mathtt V^{(1)})\cup \{\sigma_1\}$\\
${} \quad \mathbf m \mapsto \frac V2(\mathbf j(\mathbf m))$\\
where we allow ourselves to identify the set $\frac V2(\mathbf j(\mathbf m))$ and its unique element in $\frac V2(\mathtt V^{(1)})\cup \{\sigma_1\}$.
\item
$S: \mathtt U^{(0)}\xrightarrow{{}\quad{}} ]0,+\infty]$\\
	${} \quad \mathbf m \xmapsto{{}\quad{}} \boldsymbol \sigma(\mathbf m)-\frac V2(\mathbf m)$.
\end{enumerate}
\end{defi}
\hip
Following \cite{BoGaKl, HeKlNi, HHS11, LPMichel}, we can now state our last assumption that allows us to treat the generic case.
As mentionned in the introduction, this assumption could actually be omitted (see \cite{Michel} or \cite[section 6]{BonyLPMichel}) but this would introduce additionnal difficulties that are not the main concern of this paper.

\begin{hypo}\label{jvide}
Recall that we fixed a labeling $(\mathbf m_{k,j})_ {k,j}$ adapted to the separating saddle values of $V$.
We assume the following:
\begin{enumerate}[label=\alph*)]
\item Each $\mathbf m_{k,j}$ is the only global minimum of $V$ on the CC of $\{\frac V2<\sigma_k\}$ to which it belongs. \label{mseul}
\item For all $\mathbf m\neq\mathbf m' \in \mathtt U^{(0)}$, the sets $\mathbf j(\mathbf m)$ and $\mathbf j(\mathbf m')$ do not intersect.\label{jvide2}
\end{enumerate}
\end{hypo}
\hip
According to Proposition 3.9 from \cite{me}, this hypothesis is equivalent to the facts that $(\mathbf m,0)$ is the only global minimum of $W|_{E(\mathbf m)}$ and $\mathbf j^W(\mathbf m)\cap\mathbf j^W(\mathbf m')=\emptyset$ which is what we use in practice.

\section{Accurate quasimodes} \label{sectionquasim}

\subsection{Gaussian quasimodes superposition}
By Hypothesis \ref{jvide}, the potential $V$ has a unique global minimum that we denote $\underline{\mathbf m}$.
For $r>0$, denote $\tilde r$ a positive number such that for all $\mathbf m  \in \mathtt U^{(0)}\backslash\{\underline{\mathbf m}\}$ and $\mathbf s \in \mathbf j(\mathbf m)$, 
\begin{align}\label{rtilde}
W(x,v)\geq \boldsymbol \sigma(\mathbf m)+\frac{r^2}{8}+\frac{v^2-r^2}{4} \quad \text{as soon as }|x-\mathbf s|<\tilde r \text{ and }|v|\geq r.
\end{align}
We also denote for $x\in \R^d$
\begin{align}\label{b0}
B_0(x,r)=B(x,\tilde r)\times B(0,r)\subseteq \R^{2d}.
\end{align}
Let $\mathbf m  \in \mathtt U^{(0)}\backslash\{\underline{\mathbf m}\}$; for each $\mathbf s \in \mathbf j(\mathbf m)$ we introduce a %linear form 
vector $\ell^{\mathbf s}=(\ell^{\mathbf s}_x,\ell^{\mathbf s}_v)\in \R^{2d}$ which will represent a linear form involved in the construction of our quasimodes. %in the variables $(x-\mathbf s,v)$.
Note that thanks to item \ref{jvide2} from Hypothesis \ref{jvide}, each $\ell^{\mathbf s}$ corresponds to a unique $\mathbf m  \in \mathtt U^{(0)}\backslash\{\underline{\mathbf m}\}$.
In the spirit of \cite{LPMichel,BonyLPMichel,me} and more precisely in view of \eqref{x0phi}-\eqref{qs0}, we want $\mathbf s$ to be a local minimum of the function $W(x,v)+(\ell^{\mathbf s}_x\cdot (x-\mathbf s)+\ell^{\mathbf s}_v\cdot v)^2/2$ so according to Lemma \ref{det} and using the notation \eqref{hessiennes}, we take $\ell^{\mathbf s}$ satisfying
$$-\mathcal V_{\mathbf s}^{-1}\ell^{\mathbf s}_x\cdot \ell^{\mathbf s}_x-|\ell_v^{\mathbf s}|^2>\frac12.$$
This condition would be sufficient to develop a framework for the construction of our quasimodes.
However, it would appear later on when establishing a result analogous to the one of Lemma \ref{sisol} that the optimal choice of $\ell^{\mathbf s}$ would actually satisfy
$$-\mathcal V_{\mathbf s}^{-1}\ell^{\mathbf s}_x\cdot \ell^{\mathbf s}_x-|\ell_v^{\mathbf s}|^2=1.$$
Similarly, one could show in this framework from the analogous of \eqref{thetainte} that our quasimodes would not depend on the norm of $\ell^{\mathbf s}$.
Thus, we set  
\begin{align}\label{lv2}
|\ell^{\mathbf s}_v|^2=1% \qquad \text{and} \qquad -\mathcal V_{\mathbf s}^{-1}\ell^{\mathbf s}_x\cdot\ell^{\mathbf s}_x=2
\end{align}
as well as
\begin{align}\label{-Vlxlx}
-\mathcal V_{\mathbf s}^{-1}\ell^{\mathbf s}_x\cdot\ell^{\mathbf s}_x=2
\end{align}
straight away as it leads to significant simplifications in the study.\\
We now introduce the polynomial
\begin{align}\label{P}
P(\gamma)=4\gamma+(\gamma+1)^2
\end{align}
and its two roots
$$\gamma_1=-3+2\sqrt2\in(-1,0)\qquad \text{and}\qquad \gamma_2=-3-2\sqrt2<-1.$$
In the spirit of Lemma \ref{actionq}, we also introduce for $\gamma\in (\gamma_1,1]$ the %linear form 
vector $(L^{\mathbf s}_{\gamma;x}\, ,\, L^{\mathbf s}_{\gamma;v})\in \R^{2d}$ % in the variables $(x-\mathbf s,v)$ 
where
\begin{align}\label{defL}
L^{\mathbf s}_{\gamma;x}=\frac{1+\gamma}{P(\gamma)^{1/2}}\,\ell_x^{\mathbf s}\qquad\text{and }\qquad L^{\mathbf s}_{\gamma;v}=\frac{1-\gamma}{P(\gamma)^{1/2}}\,\ell_v^{\mathbf s}.
\end{align}
%and the associated smooth function $L_\gamma^{\mathbf s}$ supported in $B_0(\mathbf s,2r)$ given near $(\mathbf s,0)$ by the linear form:
%$$L_\gamma^{\mathbf s}(x,v)=L^{\mathbf s}_{\gamma;x}\cdot (x-\mathbf s)+L^{\mathbf s}_{\gamma;v}\cdot v\;\quad \text{for } (x,v)\in B_0(\mathbf s,r).$$
Note that $(L^{\mathbf s}_{0;x}\, ,\, L^{\mathbf s}_{0;v})=\ell^{\mathbf s}$.
Lemma \ref{actionq} would actually suggest to consider only $\gamma\in [0,1]$, but doing so it would appear with the notation \eqref{K} that \eqref{ode} has no non-trivial solution, which is not true anymore when working on $(\gamma_1,1]$.
We do not consider $\gamma$ outside $(\gamma_1,1]$ as it would add a condition similar to \eqref{choixl} which would be incompatible with \eqref{choixl}.
Here is the picture of an example in the case $d=1$:
\begin{center}
\begin{tikzpicture}
    \begin{axis}[
            xmin=-4,xmax=0.5,
            ymin=-1,ymax=3,
            grid=both,
            ]
\addplot [color=blue,semithick,domain=1:0,samples=20]({-2*(1+x)/((x^2+6*x+1)^(1/2))},{(1-x)/((x^2+6*x+1)^(1/2))}) node[color=black] {$\times$};
\addplot [color=blue,semithick,domain=0:-0.1,samples=20]({-2*(1+x)/((x^2+6*x+1)^(1/2))},{(1-x)/((x^2+6*x+1)^(1/2))}) node[color=black] {$\times$};
\addplot [color=blue,semithick,domain=-0.1:-0.14,samples=20]({-2*(1+x)/((x^2+6*x+1)^(1/2))},{(1-x)/((x^2+6*x+1)^(1/2))});
\addplot [color=blue,semithick,domain=1:0,samples=20]({-2*(1+x)/((x^2+6*x+1)^(1/2))},{(1-x)/((x^2+6*x+1)^(1/2))}) node[color=black,above right] {$\ell^{\mathbf s}$};
\addplot [color=blue,semithick,domain=0:-0.1,samples=20]({-2*(1+x)/((x^2+6*x+1)^(1/2))},{(1-x)/((x^2+6*x+1)^(1/2))}) node[color=black,above right] {$(L^{\mathbf s}_{-0.1,x},L^{\mathbf s}_{-0.1,v})$};
\addplot [color=blue,semithick,domain=0:1,samples=20]({-2*(1+x)/((x^2+6*x+1)^(1/2))},{(1-x)/((x^2+6*x+1)^(1/2))}) node[color=black] {$\times$};
\addplot [color=blue,semithick,domain=0:1,samples=20]({-2*(1+x)/((x^2+6*x+1)^(1/2))},{(1-x)/((x^2+6*x+1)^(1/2))}) node[color=black,above right] {$(L^{\mathbf s}_{1,x},L^{\mathbf s}_{1,v})$};
    \end{axis}
%\draw (3.1,2.7) node {$\times$};
%\draw (2,2) node[right] {$\ell^{\mathbf s}$};
\end{tikzpicture}
\end{center}
Since by Hypothesis \ref{V} we have that $V$ is a Morse function, there exists according to the Morse Lemma a smooth diffeomorphism $\phi_{\mathbf s}$ defined on $B(\mathbf s, \tilde r)$, sending $\mathbf s$ on $0$, whose differential at $\mathbf s$ is the identity and such that
\begin{align}\label{phi}
V\circ \phi^{-1}_{\mathbf s} =V(\mathbf s)+\frac12 \langle \mathcal V_{\mathbf s} \, \cdot, \cdot \rangle.
\end{align}
For shortness, we will use for $x\in B(\mathbf s, \tilde r)$ the notation
\begin{align}\label{xtilde}
\tilde x_{\mathbf s}=\phi_{\mathbf s}(x)
\end{align}
and we introduce the smooth function $L_\gamma^{\mathbf s}$ supported in $B(\mathbf s,2\tilde r)\times \R^d_v$ and given when $x$ is close to $\mathbf s$ %$B_0(\mathbf s,2r)$ given near $(\mathbf s,0)$ 
by the twisted linear form:
$$L_\gamma^{\mathbf s}(x,v)=L^{\mathbf s}_{\gamma;x}\cdot \tilde x_{\mathbf s}+L^{\mathbf s}_{\gamma;v}\cdot v\;\quad \text{for } (x,v)\in B(\mathbf s,\tilde r)\times \R^d_v.$$
Now, let us denote $\zeta \in \mathcal C^{\infty}_c(\R, [0,1])$ an even cut-off function supported in $[-\delta,\delta]$ that is equal to $1$ on $[-\delta/2,\delta/2]$ where $\delta>0$ is a parameter to be fixed later.
As we will not be able to produce some remainder terms that are uniform with respect to $\gamma\in (\gamma_1,1]$, we will work on $[\gamma_1+\nu,1]$ with 
$$\nu>0 \text{ that will be fixed small enough before letting }h\to0.$$
Consider also a probability density $k^{\mathbf s}_\nu$ on $[\gamma_1+\nu,1]$ as well as the quantity
\begin{align}\label{approxa}
A^{\mathbf s}_{\nu,h}=\int_{\gamma_1+\nu}^1 k^{\mathbf s}_\nu(\gamma)\int_0^{\infty} \zeta\Big(\frac{s}{N^{\mathbf s}(\gamma)}\Big) \e^{-\frac{s^2}{2h}}\D s\, \D \gamma=%\int_0^{\delta} \zeta(s) \e^{-\frac{s^2}{2h}}\D s=
\frac{\sqrt{\pi h}}{\sqrt 2}\big(1+O(\e^{-\alpha/h})\big)\qquad \text{for some }\alpha>0,
\end{align}
where
$$N^{\mathbf s}(\gamma)=\Big(|L^{\mathbf s}_{\gamma;x}|^2+|L^{\mathbf s}_{\gamma;v}|^2\Big)^{1/2}\geq \frac1C.$$
%Note that the function 
%\begin{align}\label{U}(\gamma_1,1]\times B_0(\mathbf s,r)\ni (\gamma,x,v)\mapsto \frac{L_\gamma^{\mathbf s}(x,v)}{N^{\mathbf s}(\gamma)}=:U_\gamma^{\mathbf s}(x,v)
%\end{align}
%can be extended smoothly to $[\gamma_1,1]\times B_0(\mathbf s,r)$.
We will also use the notation
$$U_\gamma^{\mathbf s}=\frac{L_\gamma^{\mathbf s}}{N^{\mathbf s}(\gamma)}.$$
We now define for each $\mathbf m  \in \mathtt U^{(0)}\backslash\{\underline{\mathbf m}\}$ the \emph{gaussian cut-off superposition} $\theta^{\mathbf m}_{\nu,h}$ as follows: if $(x,v)$ belongs to
\begin{align}
 \bigcup_{\gamma \in [\gamma_1+\nu,1]}\{|U_\gamma^{\mathbf s}|\leq 2\delta \} \cap B_0(\mathbf s,r)
\end{align}
for some $\mathbf s \in \mathbf j(\mathbf m)$, then 
\begin{align}\label{thetainte}
\theta^{\mathbf m}_{\nu,h}(x,v)=\frac12 \bigg(1+(A_{\nu,h}^{\mathbf s})^{-1}\int_{\gamma_1+\nu}^1 k^{\mathbf s}_\nu(\gamma)\int_0^{L_\gamma^{\mathbf s}(x,v)}\zeta\Big(\frac{s}{N^{\mathbf s}(\gamma)}\Big)\e^{-s^2/2h}\D s\, \D \gamma \bigg).
\end{align}
Here are some pictures of the set $\{|U_\gamma^{\mathbf s}|\leq 2\delta \} \cap B_0(\mathbf s,r)$ for $\gamma=\gamma_1+\nu$; $\gamma=0$ and $\gamma=1$:
\begin{center}
\begin{tikzpicture}[scale=3]

\draw[gray] (-0.5,0) -- (0.5,0) ;
\draw[gray] (0,-0.5) -- (0,0.5) ;

\filldraw[red,fill opacity=0.5] (0,-0.65)--(0.2,0.65)--(0,0.65)--(-0.2,-0.65)-- cycle;

\draw (0,0) node {$\bullet$};

\fill[white] (-0.8,0.8) rectangle (0.8,0.5);
\fill[white] (-0.8,-0.8) rectangle (0.8,-0.5);

\draw (0.04,-0.05) node[scale=0.8] {$\mathbf s$};

\draw[gray] (-0.8,0) -- (-0.5,0) ;
\draw[gray] (0,-0.8) -- (0,-0.5) ;
\draw[gray,>=stealth,->] (0.5,0) -- (0.8,0) node[below] {$x$};
\draw[gray,>=stealth,->] (0,0.5) -- (0,0.8) node[left] {$v$};

\draw[orange] (-0.4,0.5) rectangle (0.4,-0.5) node[scale=0.9,right] {$B_0(\mathbf s,r)$};

\draw (-0.1,-0.65)--(0.1,0.65) node[right] {$\{L^{\mathbf s}_0=0\}$};

\begin{scope}[xshift=1.85 cm]  
\draw[gray] (-0.5,0) -- (0.5,0) ;
\draw[gray] (0,-0.5) -- (0,0.5) ;

\filldraw[red,fill opacity=0.5] (0.1,-0.65)--(0.1,0.65)--(-0.1,0.65)--(-0.1,-0.65)-- cycle;

\draw (0,0) node {$\bullet$};

\fill[white] (-0.8,0.8) rectangle (0.8,0.5);
\fill[white] (-0.8,-0.8) rectangle (0.8,-0.5);

\draw (0.06,-0.05) node[scale=0.8] {$\mathbf s$};

\draw[gray] (-0.8,0) -- (-0.5,0) ;
\draw[gray] (0,-0.8) -- (0,-0.5) ;
\draw[gray,>=stealth,->] (0.5,0) -- (0.8,0) node[below] {$x$};
\draw[gray,>=stealth,->] (0,0.5) -- (0,0.8) node[left] {$v$};

\draw[orange] (-0.4,0.5) rectangle (0.4,-0.5) node[scale=0.9,right] {$B_0(\mathbf s,r)$};

\draw (0,-0.65)--(0,0.65) node[right] {$\{L^{\mathbf s}_1=0\}$};
\end{scope}

\begin{scope}[xshift=-1.85 cm]  
\draw[gray] (-0.5,0) -- (0.5,0) ;
\draw[gray] (0,-0.5) -- (0,0.5) ;

\filldraw[red,fill opacity=0.5] (-0.25,-0.65)--(0.45,0.65)--(0.25,0.65)--(-0.45,-0.65)-- cycle;

\draw (0,0) node {$\bullet$};

\fill[white] (-0.8,0.8) rectangle (0.8,0.5);
\fill[white] (-0.8,-0.8) rectangle (0.8,-0.5);

\draw (0.03,-0.05) node[scale=0.8] {$\mathbf s$};

\draw[gray] (-0.8,0) -- (-0.5,0) ;
\draw[gray] (0,-0.8) -- (0,-0.5) ;
\draw[gray,>=stealth,->] (0.5,0) -- (0.8,0) node[below] {$x$};
\draw[gray,>=stealth,->] (0,0.5) -- (0,0.8) node[left] {$v$};

\draw[orange] (-0.4,0.5) rectangle (0.4,-0.5) node[scale=0.9,right] {$B_0(\mathbf s,r)$};

\draw (-0.35,-0.65)--(0.35,0.65) node[right] {$\{L^{\mathbf s}_{\gamma_1+\nu}=0\}$};
\end{scope}
\end{tikzpicture}
\end{center}
Furthermore, we set 
\begin{align}\label{theta1}
\theta^{\mathbf m}_{\nu,h}=1 \quad  \text{on } \bigg(E(\mathbf m)+B(0, \varepsilon) \bigg)\Big\backslash \bigg(\bigsqcup_{\mathbf s\in\mathbf j(\mathbf m)} \Big(\bigcup_{\gamma \in [\gamma_1+\nu,1]}\{|U_\gamma^{\mathbf s}|\leq 2\delta \} \cap B_0(\mathbf s,r)\Big) \bigg)
\end{align}
with $\varepsilon=\varepsilon(r)>0$ to be fixed later and 
\begin{align}\label{theta0}
\theta^{\mathbf m}_{\nu,h}=0 \quad  \text{everywhere else.}
\end{align}
Note that $\theta^{\mathbf m}_{\nu,h}$ takes values in $[0,1]$ and that, thanks to \eqref{thetainte}, we also have
$$\theta^{\mathbf m}_{\nu,h}=1 \quad  \text{on } \Big( \bigcup_{\gamma \in [\gamma_1+\nu,1]}\{|U_\gamma^{\mathbf s}|\leq 2\delta \} \cap B_0(\mathbf s,r)\Big) \bigcap \Big( \bigcap_{\gamma \in [\gamma_1+\nu,1]} \{U_\gamma^{\mathbf s}\geq \delta \} \Big) $$
and
$$\theta^{\mathbf m}_{\nu,h}=0 \quad  \text{on } \Big( \bigcup_{\gamma \in [\gamma_1+\nu,1]}\{|U_\gamma^{\mathbf s}|\leq 2\delta \} \cap B_0(\mathbf s,r)\Big) \bigcap \Big( \bigcap_{\gamma \in [\gamma_1+\nu,1]} \{U_\gamma^{\mathbf s}\leq -\delta \} \Big) .$$
%and that for the moment it is not clear wether $\theta^{\mathbf m}_{\nu,h}$ is smooth or not.
Denote $\Omega$ the CC of $\{W\leq \boldsymbol \sigma(\mathbf m)\}$ containing $\mathbf m$.
The CCs of $\{W\leq \boldsymbol \sigma(\mathbf m)\}$ are separated so for $\varepsilon>0$ small enough, there exists $\tilde \varepsilon>0$ such that 
$$\min\,\big\{W(x,v)\, ; \, \mathrm{dist} \big((x,v),\Omega\big)=\varepsilon\big\}=\boldsymbol \sigma(\mathbf m)+2 \tilde \varepsilon.$$
Thus the distance between $\{W \leq \boldsymbol \sigma(\mathbf m)+\tilde \varepsilon\}\cap \big(\Omega+B(0,\varepsilon)\big)$ and $\partial \big(\Omega+B(0,\varepsilon)\big)$ is positive and we can consider a cut-off function 
$$\chi_{\mathbf m}\in \mathcal C^{\infty}_c(\R^{2d},[0,1])$$
such that 
\begin{align}\label{chim}
\chi_{\mathbf m} =1 \text{ on } \{W \leq \boldsymbol \sigma(\mathbf m)+\tilde \varepsilon\}\cap \big(\Omega+B(0,\varepsilon)\big)
\qquad \text{ and } \qquad  
\mathrm{supp} \,\chi_{\mathbf m} \subset  \big(\Omega+B(0,\varepsilon)\big).
\end{align}
To sum up, we have the following picture:
\begin{center}
\begin{tikzpicture}[scale=5]
%\fill[pattern color=blue,pattern=north east lines,opacity=0.6] (0.1,0) -- plot[samples=100,domain=0:-0.9] (\x, {\x*sqrt(1-(\x)^2)-0.1}) -- plot[samples=100,domain=-0.85:-1] (\x-0.1, {\x*sqrt(1-(\x)^2)}) --  plot[samples=100,domain=-1:-0.85] (\x-0.1, {-\x*sqrt(1-(\x)^2)}) -- plot[samples=100,domain=-0.9:0] (\x, {-\x*sqrt(1-(\x)^2)+0.1}) -- cycle ;

\draw plot[samples=100,domain=1:2] (\x, {(\x-1)*sqrt(1-((\x-1))^2)}) 
		-- plot[samples=100,domain=2:1] (\x, {-(\x-1)*sqrt(1-((\x-1))^2)}) -- cycle ;
\draw plot[samples=100,domain=0:1] (\x, {\x*(1-\x)}) -- plot[samples=100,domain=1:0] (\x, {-\x*(1-\x)}) -- cycle ;
\draw plot[samples=100,domain=-1:0] (\x, {\x*sqrt(1-(\x)^2)}) -- plot[samples=100,domain=0:-1] (\x, {-\x*sqrt(1-(\x)^2)}) -- cycle ;

\begin{scope} [xshift=-6.5,scale=1.15]
\draw[densely dashed] plot[samples=100,domain=1.07:2] (\x, {(\x-1)*sqrt(1-((\x-1))^2)}) 
		-- plot[samples=100,domain=2:1.07] (\x, {-(\x-1)*sqrt(1-((\x-1))^2)}) ;
\end{scope}
\begin{scope} [xshift=-2,scale=1.15]
\draw[densely dashed] plot[samples=100,domain=0.07:0.93] (\x, {\x*(1-\x)});
\draw[densely dashed] plot[samples=100,domain=0.93:0.07] (\x, {-\x*(1-\x)});
\end{scope}
\begin{scope} [xshift=2.4,scale=1.15]
\draw[densely dashed] plot[samples=100,domain=-0.07:-1] (\x, {\x*sqrt(1-(\x)^2)}) -- plot[samples=100,domain=-1:-0.07] (\x, {-\x*sqrt(1-(\x)^2)});
\fill[pattern={Lines[angle=45,distance=5pt]},pattern color=blue,opacity=0.7] (-0.02,0) -- plot[samples=100,domain=-0.07:-1] (\x, {\x*sqrt(1-(\x)^2)}) -- plot[samples=100,domain=-1:-0.07] (\x, {-\x*sqrt(1-(\x)^2)}) -- cycle;
\end{scope}

\filldraw[red,fill opacity=0.5] (-0.1,0.5)--(-0.1,0)--(-0.37,-0.5)--(0.1,-0.5)--(0.1,0)--(0.37,0.5)-- cycle;

\draw (0,0) node {$\bullet$};
%\draw (-0.075,0.003) node[scale=0.72] {$\mathbf j(\mathbf m)$};
\draw (-0.8,0) node {$\bullet$};
\draw (-0.75,-0.02) node[scale=0.9] {$\mathbf m$};
\draw (-0.55,0.2) node[scale=0.9] {$\theta^{\mathbf m}_{\nu,h}=1$};
\draw (0.55,0.05) node[scale=0.9] {$\theta^{\mathbf m}_{\nu,h}=0$};
\draw (1.4,-0.08) node[scale=1.2] {$\Omega$};
\draw (0.75,0.27) node[scale=0.9,rotate=-19] {supp $\chi_{\mathbf m}$};

\begin{scope} [xshift=-3]
\draw[>=stealth,<-] (0.09,-0.3) arc ({(-0.7*pi) r}:{(-0.55*pi) r}:1.2) node[right] {$\theta^{\mathbf m}_{\nu,h}$ given by \eqref{thetainte}};
\end{scope}

\draw[>=stealth,<-] (0,0.02) arc ({0}:{(0.25*pi) r}:0.55) node[left,scale=0.72] {$\mathbf j(\mathbf m)$};
\end{tikzpicture}
\end{center}
The following Lemma will among other things help us discuss the regularity of $\theta_{\nu,h}^{\mathbf m}$.

\begin{lem}\label{bonmin}
Recall the notaion \eqref{hessiennes}. For all $\gamma\in (\gamma_1,1]$, we have
$$-\mathcal V_{\mathbf s}^{-1}L^{\mathbf s}_{\gamma,x}\cdot L^{\mathbf s}_{\gamma,x}-(L^{\mathbf s}_{\gamma,v})^2=1.$$
In particular, according to Lemma \ref{det}, $(\mathbf s,0)$ is a non degenerate minimum of $W+\frac12 (L_\gamma^{\mathbf s})^2$ and the associated hessian has determinant 
$$2^{-2d} \big|\det \mathcal V_{\mathbf s}\big|.$$
%\color{red}on voudrait hessienne minorée unift en $\gamma$
\end{lem}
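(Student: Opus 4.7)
The plan is to verify the identity by a direct algebraic substitution, then invoke Lemma \ref{det} for the structural consequence and compute the Hessian determinant by a rank-one update argument.

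\emph{Step 1: Reduction to a scalar computation.} Using the explicit definitions \eqref{defL} of $L^{\mathbf s}_{\gamma;x}$ and $L^{\mathbf s}_{\gamma;v}$ together with the two normalization conditions \eqref{lv2} and \eqref{-Vlxlx} imposed on $\ell^{\mathbf s}$, I would pull out the scalar prefactors to obtain
$$-\mathcal V_{\mathbf s}^{-1} L^{\mathbf s}_{\gamma;x}\cdot L^{\mathbf s}_{\gamma;x}-|L^{\mathbf s}_{\gamma;v}|^2=\frac{(1+\gamma)^2}{P(\gamma)}\bigl(-\mathcal V_{\mathbf s}^{-1}\ell^{\mathbf s}_x\cdot \ell^{\mathbf s}_x\bigr)-\frac{(1-\gamma)^2}{P(\gamma)}|\ell^{\mathbf s}_v|^2=\frac{2(1+\gamma)^2-(1-\gamma)^2}{P(\gamma)}.$$

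\emph{Step 2: Algebraic identity.} Expanding the numerator gives $1+6\gamma+\gamma^2$, which is exactly $P(\gamma)=4\gamma+(\gamma+1)^2=\gamma^2+6\gamma+1$. The ratio is therefore $1$, proving the first assertion. Note that this is precisely the "optimal" normalization already singled out in the discussion preceding \eqref{lv2}, so the lemma says that the entire family $\{L^{\mathbf s}_\gamma\}_{\gamma\in(\gamma_1,1]}$ inherits this property from $\ell^{\mathbf s}=L^{\mathbf s}_0$.

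\emph{Step 3: Non-degeneracy and the determinant.} The identity just established is exactly the hypothesis of Lemma \ref{det} applied to the linear form $L^{\mathbf s}_\gamma$ at $(\mathbf s,0)$, which immediately yields that $(\mathbf s,0)$ is a non-degenerate local minimum of $W+\tfrac12(L^{\mathbf s}_\gamma)^2$. For the determinant formula, I would write the Hessian of $W+\tfrac12(L^{\mathbf s}_\gamma)^2$ at $(\mathbf s,0)$ as
$$H_\gamma=\tfrac12\begin{pmatrix}\mathcal V_{\mathbf s}&0\\ 0&\mathrm{Id}\end{pmatrix}+\binom{L^{\mathbf s}_{\gamma;x}}{L^{\mathbf s}_{\gamma;v}}\bigl(L^{\mathbf s}_{\gamma;x},L^{\mathbf s}_{\gamma;v}\bigr),$$
(using that $\phi_{\mathbf s}$ has differential $\mathrm{Id}$ at $\mathbf s$ so the quadratic correction from the Morse chart is absorbed into lower order terms) and apply the matrix determinant lemma. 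With the first-step identity giving $v^\top A^{-1}u=-2$, this produces $\det H_\gamma=-2^{-2d}\det \mathcal V_{\mathbf s}$, whose modulus is $2^{-2d}|\det\mathcal V_{\mathbf s}|$ as claimed.

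There is no real obstacle here: the only point requiring care is book-keeping of signs, since $\mathcal V_{\mathbf s}$ has signature $(d-1,1)$ and the normalization \eqref{-Vlxlx} contains a minus sign that exactly compensates to make the quadratic form $-\mathcal V_{\mathbf s}^{-1}(\cdot)\cdot(\cdot)+|\cdot|^2$ (which is naturally of indefinite sign on $\R^{2d}$) evaluate to a definite positive number on the family $L_\gamma^{\mathbf s}$.
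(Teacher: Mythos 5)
Your proof is correct and follows essentially the same route as the paper's: you substitute \eqref{defL}, use the normalizations \eqref{lv2} and \eqref{-Vlxlx} to reduce to the scalar identity $2(1+\gamma)^2-(1-\gamma)^2=P(\gamma)$, and then observe that the Hessian at $(\mathbf s,0)$ is $\mathcal W_{\mathbf s}+\nabla L_\gamma^{\mathbf s}\,(\nabla L_\gamma^{\mathbf s})^t$. The only cosmetic difference is that for the determinant you re-run the rank-one update argument (matrix determinant lemma) directly, whereas the paper simply substitutes the scalar value $1$ into the determinant formula already provided by Lemma~\ref{det}; these are the same computation, with yours being marginally more self-contained. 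Your remark about the Morse chart being tangent to the identity at $\mathbf s$, so that the twisting does not affect the Hessian at $(\mathbf s,0)$, is a point the paper leaves implicit and is worth making explicit.
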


\begin{proof}
It suffices to use \eqref{lv2} and \eqref{-Vlxlx}:
$$-\mathcal V_{\mathbf s}^{-1}L^{\mathbf s}_{\gamma,x}\cdot L^{\mathbf s}_{\gamma,x}-(L^{\mathbf s}_{\gamma,v})^2=2\,\frac{(1+\gamma)^2}{P(\gamma)}-\frac{(1-\gamma)^2}{P(\gamma)}=\frac{P(\gamma)}{P(\gamma)}=1.$$
For the computation of the determinant, it is sufficient to notice that, with the notation \eqref{hessiennes}, the hessian of $W+\frac12 (L_\gamma^{\mathbf s})^2$ at $(\mathbf s,0)$ is 
%$$\mathrm{Hess}_{(\mathbf s,0)}\Big(W+\frac12 (L_\gamma^{\mathbf s})^2\Big)=
$$\mathcal W_{\mathbf s}+\begin{pmatrix}L^{\mathbf s}_{\gamma;x}\\L^{\mathbf s}_{\gamma;v}\end{pmatrix}\begin{pmatrix}L^{\mathbf s}_{\gamma;x}\\L^{\mathbf s}_{\gamma;v}\end{pmatrix}^t$$
and apply Lemma \ref{det}.
\end{proof}

\begin{prop}\label{thetalisse}
Up to changing %$(\ell_x^{\mathbf s},\ell_v^{\mathbf s})$ into $-(\ell_x^{\mathbf s},\ell_v^{\mathbf s})$
the sign of $\ell^{\mathbf s}$, for all $\nu \in (0,|\gamma_1|)$, we can choose $\varepsilon>0$ and $\delta>0$ small enough so that the function $\theta^{\mathbf m}_{\nu,h}$ is smooth on the neighborhood of the support of $\chi_{\mathbf m}$ given by $\Omega+B(0,\varepsilon)$.
\end{prop}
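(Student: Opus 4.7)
The plan is to verify smoothness piecewise and check the matching where the three pieces of the definition of $\theta^{\mathbf m}_{\nu,h}$ meet. On each piece, smoothness is immediate: the integral in \eqref{thetainte} depends smoothly on $(x,v)$ through $L^{\mathbf s}_\gamma(x,v)$, with the cut-off $\zeta$ keeping the inner integral over a compact $s$-interval; and the constants \eqref{theta1} and \eqref{theta0} are trivially smooth.

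The central saturation property is that whenever $U^{\mathbf s}_\gamma(x,v) \geq \delta$ for every $\gamma \in [\gamma_1+\nu,1]$, the support of $\zeta(\cdot/N^{\mathbf s}(\gamma))$ forces the inner integral to reach $\int_0^{\infty} \zeta(s/N^{\mathbf s}(\gamma))\e^{-s^2/2h}\D s$, so that averaging against $k^{\mathbf s}_\nu$ yields $A^{\mathbf s}_{\nu,h}$ and $\theta^{\mathbf m}_{\nu,h}=1$. By the evenness of $\zeta$ and the corresponding parity of the inner integral, $\theta^{\mathbf m}_{\nu,h}=0$ when $U^{\mathbf s}_\gamma \leq -\delta$ for all $\gamma$. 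Applied at the boundary of the cone $\bigcup_{\gamma}\{|U^{\mathbf s}_\gamma|\leq 2\delta\}$ (inside $B_0(\mathbf s,r)$), where every $|U^{\mathbf s}_\gamma|\geq 2\delta$, this combined with continuity of $\gamma\mapsto U^{\mathbf s}_\gamma(x,v)$ on the connected interval $[\gamma_1+\nu,1]$ (which prevents sign changes) shows that the integral is locally saturated to $0$ or $1$, matching the adjacent constant value together with all its derivatives.

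To match on the correct side, we use the freedom in the sign of $\ell^{\mathbf s}$. Condition \eqref{-Vlxlx} forces $\ell^{\mathbf s}_x$ to carry a nontrivial component along the unique unstable direction of $\mathcal V_{\mathbf s}$ (the eigenvector associated with $\tau_{\mathbf s}$), and after flipping its sign we may assume that $\{L^{\mathbf s}_\gamma>0\}$ lies on the $E(\mathbf m)$-side of $\partial\Omega$ near $(\mathbf s,0)$ for every $\gamma\in[\gamma_1+\nu,1]$, using that the family $\gamma\mapsto(L^{\mathbf s}_{\gamma;x},L^{\mathbf s}_{\gamma;v})$ is a continuous deformation of $\ell^{\mathbf s}$ on a compact interval. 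Then the half of the cone where all $U^{\mathbf s}_\gamma>0$ matches with $\theta=1$ on $E(\mathbf m)+B(0,\varepsilon)$ and the other half matches with $\theta=0$.

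The main obstacle is the matching across $\partial B_0(\mathbf s,r)\cap \bigcup_\gamma\{|U^{\mathbf s}_\gamma|\leq 2\delta\}\cap(\Omega+B(0,\varepsilon))$, where the integral formula meets the outer constant value $\theta=1$. One must show that at every such point the integral is already saturated, i.e.\ $U^{\mathbf s}_\gamma\geq\delta$ for all $\gamma$ in a full neighborhood. Exploiting the Morse normal form \eqref{phi} of $V$ near $\mathbf s$, the growth estimate \eqref{rtilde}, and the orientation above, one establishes that on $\partial B_0(\mathbf s,r)\cap(\Omega+B(0,\varepsilon))$ every $L^{\mathbf s}_\gamma$ is bounded below by a positive quantity depending only on $r,\tilde r,\varepsilon$ and the local Morse data at $\mathbf s$, uniformly in $\gamma\in[\gamma_1+\nu,1]$. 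Taking $\delta$ small enough (and $\varepsilon$ consistent with $r,\tilde r$) then makes this lower bound exceed $\delta N^{\mathbf s}(\gamma)$, yielding local constancy of the integral near the boundary and hence smoothness of $\theta^{\mathbf m}_{\nu,h}$ on all of $\Omega+B(0,\varepsilon)$.
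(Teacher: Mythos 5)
Your overall strategy is the same as the paper's: piecewise smoothness, a saturation argument at the interfaces, and a choice of sign of $\ell^{\mathbf s}$. However, there are a genuine error and two substantive gaps.

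\textbf{The error on $\partial B_0(\mathbf s,r)$.} You claim that on $\partial B_0(\mathbf s,r)\cap(\Omega+B(0,\varepsilon))$ ``every $L^{\mathbf s}_\gamma$ is bounded below by a positive quantity.'' This is false. The set $\Omega$, being the connected component of $\{W\leq \boldsymbol\sigma(\mathbf m)\}$ containing $\mathbf m$, contains both $E(\mathbf m)$ and the other local component $\Omega_{\mathbf s}$ of $\{W<\boldsymbol\sigma(\mathbf m)\}$ near $(\mathbf s,0)$, because the two are joined at $(\mathbf s,0)$ in $\{W\leq\boldsymbol\sigma(\mathbf m)\}$. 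After the sign is fixed, $U^{\mathbf s}_\gamma>0$ on $E(\mathbf m)\cap B_0(\mathbf s,r)$ but $U^{\mathbf s}_\gamma<0$ on $\Omega_{\mathbf s}\cap B_0(\mathbf s,r)$, so $L^{\mathbf s}_\gamma$ takes both signs on $\partial B_0(\mathbf s,r)\cap\Omega$. The correct statement, which the paper proves, is that $|U^{\mathbf s}_\gamma|$ is bounded away from $0$ uniformly in $\gamma$ on $\partial B_0(\mathbf s,r)\cap\Omega$ (via \eqref{zerodel} and a compactness argument), which in turn allows one to choose $\delta$ so small that the cone $\bigcup_\gamma\{|U_\gamma^{\mathbf s}|\leq 2\delta\}$ misses $\partial B_0(\mathbf s,r)\cap(\Omega+B(0,\varepsilon))$ entirely; then there is no matching to perform across that boundary.

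\textbf{The sign consistency is not established.} You assert that after flipping $\ell^{\mathbf s}$ one may assume $\{L^{\mathbf s}_\gamma>0\}$ lies on the $E(\mathbf m)$-side for every $\gamma$, invoking continuity of the deformation $\gamma\mapsto L^{\mathbf s}_\gamma$. But continuity alone does not show that $U^{\mathbf s}_\gamma$ has a $\gamma$-independent sign on $E(\mathbf m)\cap B_0(\mathbf s,r)$: the missing input is that for each fixed $\gamma$, $U^{\mathbf s}_\gamma$ does not change sign on that set, which in the paper follows from Lemma~\ref{bonmin} (so that $\{U^{\mathbf s}_\gamma=0\}\cap B_0(\mathbf s,r)$ is contained in $\{W\geq\boldsymbol\sigma(\mathbf m)\}$), Lemma~\ref{1.4}, and the separating saddle point property $\mathbf s\in\mathtt V^{(1)}$. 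Without that argument, flipping $\ell^{\mathbf s}$ to get the sign right at one point does not control the sign on all of $E(\mathbf m)\cap B_0(\mathbf s,r)$, nor for all $\gamma$.

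\textbf{The outer interface is not treated.} Your proof never addresses the boundary $\partial(E(\mathbf m)+B(0,\varepsilon))$ away from the cones (the paper's set $F_6$), where $\theta^{\mathbf m}_{\nu,h}$ would jump from $1$ to $0$. The paper shows this set does not meet $\Omega+B(0,\varepsilon)$, using \eqref{1.10}, \eqref{1.10bis}, \eqref{1.11} and the identity \eqref{antoine}. Your proof should contain an analogue of this step; as written, smoothness there is unproved. Your use of compactness and the intermediate value theorem in place of the explicit structural identity \eqref{antoine} is legitimate and arguably more conceptual, but the identity also underpins the paper's treatment of $F_6$ and the clean description of the pieces $F_1,\dots,F_4$, so if you bypass it you must supply a substitute for those applications as well.
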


\begin{proof}
%\color{red} A reprendre, si on veut sur $(\gamma_1,1]$ entier, prendre qqch comme $\varepsilon=\varepsilon_h\approx h^{1/2}$ ; $\tilde \varepsilon=\tilde \varepsilon_h\approx \varepsilon_h^2$ et $\nu$ qui tend vers 0 moins vite.
%Il faudrait abandonner $N(\gamma)$ et on perdrait l'égalité \eqref{antoine}.
%\color{black}
Recall that by item \ref{jvide2} from Hypothesis \ref{jvide}, each $\ell^{\mathbf s}$ corresponds to a unique $\mathbf m  \in \mathtt U^{(0)}\backslash\{\underline{\mathbf m}\}$.
Let us first show that in $B_0(\mathbf s,r)$, we have
\begin{align}\label{antoine}
\bigcup_{\gamma \in [\gamma_1+\nu,1]}\{|U_\gamma^{\mathbf s}|\leq 2\delta \} = \big(\{U^{\mathbf s}_1\geq -2\delta\}\cap\{U^{\mathbf s}_{\gamma_1+\nu}\leq 2\delta\}\big) \cup \big(\{U^{\mathbf s}_1\leq 2\delta\}\cap\{U^{\mathbf s}_{\gamma_1+\nu}\geq -2\delta\}\big)
\end{align}
(so in particular, this set is closed).

\begin{center}
\begin{tikzpicture}[scale=3]
\begin{scope}[xshift=1.5 cm]  
\draw[gray] (-0.5,0) -- (0.5,0) ;
\draw[gray] (0,-0.5) -- (0,0.5) ;
\begin{scope}[rotate=180]
\draw (-0.1,0.5)--(-0.1,0)--(-0.37,-0.5)--(0.1,-0.5)--(0.1,0)--(0.37,0.5)-- cycle;

\filldraw[red,fill opacity=0.5] (-0.37,-0.5)--(0.1,-0.5)--(0.1,0.37)-- cycle;

\draw (0,0) node {$\bullet$};

\fill[white] (-0.8,0.8) rectangle (0.8,0.5);
\fill[white] (-0.8,-0.8) rectangle (0.8,-0.5);

\draw[orange] (0.4,-0.5) rectangle (-0.4,0.5) node[scale=0.9,right] {$B_0(\mathbf s,r)$};

\draw (-0.06,-0.05) node[scale=0.8] {$\mathbf s$};

\draw (0.37,0.5)--(-0.1,0.5);
\draw[red] (-0.37,-0.5)--(0.1,-0.5);
\draw[red,densely dashed] (0.277,0.7)--(-0.477,-0.7);
\draw[red,densely dashed] (0.1,-0.7)--(0.1,0.7);

\draw[>=stealth,<-] (-0.1,-0.3) arc ({(-0.65*pi) r}:{(-0.4*pi) r}:1) node[scale=0.9,left] {$\{U^{\mathbf s}_1\leq 2\delta\}\cap\{U^{\mathbf s}_{\gamma_1+\nu}\geq -2\delta\}$};
\end{scope}

\draw[gray] (-0.8,0) -- (-0.5,0) ;
\draw[gray] (0,-0.8) -- (0,-0.5) ;
\draw[gray,>=stealth,->] (0.5,0) -- (0.8,0) node[below] {$x$};
\draw[gray,>=stealth,->] (0,0.5) -- (0,0.8) node[left] {$v$};

%\draw[orange] (-0.4,0.5) rectangle (0.4,-0.5) node[scale=0.9,right] {$B_0(\mathbf s,r)$};
%
%\draw (-0.2,-0.65)--(0.2,0.65) node[right] {$\{L^{\mathbf s}_{\gamma_1+\nu}=0\}$};
\end{scope}

\begin{scope}[xshift=-1.5 cm]  
\draw[gray] (-0.5,0) -- (0.5,0) ;
\draw[gray] (0,-0.5) -- (0,0.5) ;

\draw (-0.1,0.5)--(-0.1,0)--(-0.37,-0.5)--(0.1,-0.5)--(0.1,0)--(0.37,0.5)-- cycle;

\filldraw[red,fill opacity=0.5] (-0.37,-0.5)--(0.1,-0.5)--(0.1,0.37)-- cycle;

\draw (0,0) node {$\bullet$};

\fill[white] (-0.8,0.8) rectangle (0.8,0.5);
\fill[white] (-0.8,-0.8) rectangle (0.8,-0.5);

\draw[orange] (0.4,-0.5) rectangle (-0.4,0.5) node[scale=0.9,left] {$B_0(\mathbf s,r)$};

\draw (-0.06,-0.05) node[scale=0.8] {$\mathbf s$};

\draw[gray] (-0.8,0) -- (-0.5,0) ;
\draw[gray] (0,-0.8) -- (0,-0.5) ;
\draw[gray,>=stealth,->] (0.5,0) -- (0.8,0) node[below] {$x$};
\draw[gray,>=stealth,->] (0,0.5) -- (0,0.8) node[left] {$v$};

\draw (0.37,0.5)--(-0.1,0.5);
\draw[red] (-0.37,-0.5)--(0.1,-0.5);
\draw[red,densely dashed] (0.277,0.7)--(-0.477,-0.7);
\draw[red,densely dashed] (0.1,-0.7)--(0.1,0.7);

\draw[>=stealth,<-] (-0.1,-0.3) arc ({(-0.65*pi) r}:{(-0.4*pi) r}:1) node[scale=0.9,right] {$\{U^{\mathbf s}_1\geq -2\delta\}\cap\{U^{\mathbf s}_{\gamma_1+\nu}\leq 2\delta\}$};
\end{scope}

%\begin{scope}[yshift=-0.65 cm]
%\draw (0,0) node {The sets $\big(\{U^{\mathbf s}_1\geq -2\delta\}\cap\{U^{\mathbf s}_{\gamma_1+\nu}\leq 2\delta\}\big)$};
%\end{scope}
%\begin{scope}[yshift=-0.85 cm]
%\draw (0,0) node {and $\big(\{U^{\mathbf s}_1\leq 2\delta\}\cap\{U^{\mathbf s}_{\gamma_1+\nu}\geq -2\delta\}\big)$};
%\end{scope}
\end{tikzpicture}
\end{center}
Let $(x,v)\in \{U^{\mathbf s}_1\geq -2\delta\}\cap\{U^{\mathbf s}_{\gamma_1+\nu}\leq 2\delta\}$. If $U^{\mathbf s}_1(x,v)\leq 2\delta$, then $(x,v)\in \{|U_1^{\mathbf s}|\leq 2\delta \}$ and similarly, if $U^{\mathbf s}_{\gamma_1+\nu}(x,v)\geq -2\delta$, then $(x,v)\in \{|U_{\gamma_1+\nu}^{\mathbf s}|\leq 2\delta \}$. Now if $U^{\mathbf s}_1(x,v)> 2\delta$ and $U^{\mathbf s}_{\gamma_1+\nu}(x,v)< -2\delta$, by the intermediate value theorem, there exists $\gamma\in [\gamma_1+\nu,1]$ such that $U_\gamma^{\mathbf s}(x,v)=0$ so in particular $(x,v)\in \{|U_{\gamma}^{\mathbf s}|\leq 2\delta \}$.
Thus, we have shown that
\begin{align}\label{incl1}
\{U^{\mathbf s}_1\geq -2\delta\}\cap\{U^{\mathbf s}_{\gamma_1+\nu}\leq 2\delta\}\subseteq \bigcup_{\gamma \in [\gamma_1+\nu,1]}\{|U_\gamma^{\mathbf s}|\leq 2\delta \}
\end{align}
and clearly the same strategy of proof enables to show that
\begin{align}\label{incl2}
\{U^{\mathbf s}_1\leq 2\delta\}\cap\{U^{\mathbf s}_{\gamma_1+\nu}\geq -2\delta\}\subseteq \bigcup_{\gamma \in [\gamma_1+\nu,1]}\{|U_\gamma^{\mathbf s}|\leq 2\delta \}.
\end{align}
Conversely, let 
$$(x,v)\notin \big(\{U^{\mathbf s}_1\geq -2\delta\}\cap\{U^{\mathbf s}_{\gamma_1+\nu}\leq 2\delta\}\big) \cup \big(\{U^{\mathbf s}_1\leq 2\delta\}\cap\{U^{\mathbf s}_{\gamma_1+\nu}\geq -2\delta\}\big).$$
Since $\{U^{\mathbf s}_1< -2\delta\}\cap\{U^{\mathbf s}_{1}> 2\delta\}$ and $\{U^{\mathbf s}_{\gamma_1+\nu}< -2\delta\}\cap\{U^{\mathbf s}_{\gamma_1+\nu}> 2\delta\}$ are empty, we have 
\begin{align}\label{2delta}
(x,v)\in \{U^{\mathbf s}_1< -2\delta\}\cap\{U^{\mathbf s}_{\gamma_1+\nu}< -2\delta\} \quad \text{or} \quad (x,v)\in \{U^{\mathbf s}_{\gamma_1+\nu}> 2\delta\}\cap\{U^{\mathbf s}_1> 2\delta\}.
\end{align}
Besides, using \eqref{dyl} and \eqref{lv2}, one can check that the sign of $\partial_\gamma U^{\mathbf s}_\gamma(x,v)$ is given by 
\begin{align}\label{ugamma'}
\ell_x^{\mathbf s}\cdot \tilde x_{\mathbf s}-|\ell_x^{\mathbf s}|^2 \ell_v^{\mathbf s} \cdot v - \big( \ell_x^{\mathbf s}\cdot \tilde x_{\mathbf s}+|\ell_x^{\mathbf s}|^2 \ell_v^{\mathbf s} \cdot v\big) \gamma
\end{align}
which vanishes at most once in $(\gamma_1+\nu,1)$.
If it does not vanish in $(\gamma_1+\nu,1)$, then by monotonicity \eqref{2delta} implies that for any $\gamma\in [\gamma_1+\nu,1]$, we have $(x,v)\notin \{|U^{\mathbf s}_\gamma|\leq 2\delta\}$.
Now in the case where the expression from \eqref{ugamma'} vanishes at some point in $(\gamma_1+\nu,1)$, its values at $\gamma_1+\nu$ and $1$ have opposite signs, i.e
\begin{align}\label{memesigne}
|\ell_x^{\mathbf s}|^2 \ell_v^{\mathbf s} \cdot v \;\Big((1-\gamma_1-\nu)\ell_x^{\mathbf s}\cdot \tilde x_{\mathbf s} - |\ell_x^{\mathbf s}|^2 (1+\gamma_1+\nu) \ell_v^{\mathbf s} \cdot v\Big)>0.
\end{align}
When both factors from \eqref{memesigne} are positive, we have $\ell_x^{\mathbf s}\cdot \tilde x_{\mathbf s}>0$ so $U_1^{\mathbf s} (x,v)>0$ and it follows that $(x,v)\in \{U^{\mathbf s}_{\gamma_1+\nu}> 2\delta\}\cap\{U^{\mathbf s}_1> 2\delta\}$.
Moreover, we also have in that case that the minimum of $\gamma\mapsto U_\gamma^{\mathbf s}(x,v)$ on $[\gamma_1+\nu,1]$ is attained on the boundary of the interval since $\partial_\gamma U^{\mathbf s}_\gamma(x,v)|_{\gamma=1}<0$, so for any $\gamma\in [\gamma_1+\nu,1]$ it holds $(x,v)\in \{U^{\mathbf s}_\gamma> 2\delta\}$.
Here again, the same strategy of proof enables to show that if both factors from \eqref{memesigne} are negative, then for any $\gamma\in [\gamma_1+\nu,1]$, it holds $(x,v)\in \{U^{\mathbf s}_\gamma< -2\delta\}$.
Combined with \eqref{incl1} and \eqref{incl2}, this proves \eqref{antoine}.\\
From \eqref{thetainte}, \eqref{theta1}, \eqref{theta0} and \eqref{antoine}, we see that the only parts on which it is not clear that $\theta_{\nu,h}^\mathbf m$ is smooth are 
$$F_{1}=\bigsqcup_{\mathbf s \in \mathbf j(\mathbf m)}\Big(\{U_1^{\mathbf s}= 2\delta\}\cap \{U_{\gamma_1+\nu}^{\mathbf s}\geq 2\delta\} \cap  B_0(\mathbf s,r)\Big),$$ 
$$F_2=\bigsqcup_{\mathbf s \in \mathbf j(\mathbf m)}\Big(\{U_1^{\mathbf s}\geq 2\delta\}\cap \{U_{\gamma_1+\nu}^{\mathbf s}= 2\delta\} \cap  B_0(\mathbf s,r)\Big),$$
$$F_3=\bigsqcup_{\mathbf s \in \mathbf j(\mathbf m)}\Big(\{U_1^{\mathbf s}=-2\delta\}\cap \{U_{\gamma_1+\nu}^{\mathbf s}\leq- 2\delta\} \cap  B_0(\mathbf s,r)\Big),$$ 
$$F_4=\bigsqcup_{\mathbf s \in \mathbf j(\mathbf m)}\Big(\{U_1^{\mathbf s}\leq- 2\delta\}\cap \{U_{\gamma_1+\nu}^{\mathbf s}=- 2\delta\} \cap  B_0(\mathbf s,r)\Big),$$ 
% F_{2}=\bigsqcup_{\mathbf s \in\mathbf j(\mathbf m)}\Big(B_0(\mathbf s,r)\cap \{|L_\gamma^{\mathbf s}|= 2\delta N^{\mathbf s}(\gamma)\}\Big)$$
$$F_5=\bigsqcup_{\mathbf s \in \mathbf j(\mathbf m)}\Big(\bigcup_{\gamma \in [\gamma_1+\nu,1]}\{|U_\gamma^{\mathbf s}|\leq 2\delta \}\cap \partial B_0(\mathbf s,r)\Big)$$
$$\qquad \text{ and } \quad F_6=\partial \bigg(E(\mathbf m)+B(0, \varepsilon) \bigg)\Big\backslash \bigg(\bigsqcup_{\mathbf s\in\mathbf j(\mathbf m)} \Big(\bigcup_{\gamma \in [\gamma_1+\nu,1]}\{|U_\gamma^{\mathbf s}|\leq 2\delta \} 
\cap B_0(\mathbf s,r)\Big) \bigg).$$
Note that \eqref{antoine} suggested to put $\{U_1^{\mathbf s}= 2\delta\}\cap \{U_{\gamma_1+\nu}^{\mathbf s}\geq -2\delta\} \cap  B_0(\mathbf s,r)$ in the definition of $F_1$, but we allowed ourselves to discard the part $\{U_1^{\mathbf s}= 2\delta\}\cap \{U_{\gamma_1+\nu}^{\mathbf s}\in [-2\delta,2\delta)\} \cap  B_0(\mathbf s,r)$ since it is included in the interior of $\{U_1^{\mathbf s}\geq -2\delta\}\cap \{U_{\gamma_1+\nu}^{\mathbf s} \leq  2\delta\} \cap  B_0(\mathbf s,r)$ (and we did similarly for $F_2$, $F_3$ and $F_4$).\\
Now, let $\mathbf s\in \mathbf j(\mathbf m)$ and $(\gamma,x,v) \in [\gamma_1+\nu,1]\times\overline{B_0(\mathbf s,r)}\backslash \{(\mathbf s,0)\}$ such that $U^{\mathbf s}_\gamma(x,v)=L_\gamma^{\mathbf s}(x,v)=0$.
Using Lemma \ref{bonmin}, we see that if $r>0$ is small enough,
\begin{align}\label{zerodel}
W(x,v)=W(x,v)+\frac12 L_\gamma^{\mathbf s}(x,v)^2>W(\mathbf s,0).
\end{align}
Hence, for all $\gamma\in[\gamma_1+\nu,1]$, the set $\{U_\gamma^{\mathbf s}=0\}\cap B_0(\mathbf s,r)$ is contained in $\{W\geq \boldsymbol \sigma(\mathbf m)\}$.
Assume by contradiction that for any $r>0$, the function $U_\gamma^{\mathbf s}$ takes both positive and negative values on $E(\mathbf m)\cap B_0(\mathbf s,r)$.
%Since $(\mathbf s,0)\in \mathtt V^{(1)}$, $\{W<\boldsymbol \sigma (\mathbf m)\}\cap$ would 
Then according to Lemma \ref{1.4}, the two CCs of $\mathcal O_r\cap \{W<\boldsymbol \sigma (\mathbf m)\}$ are both included in $E(\mathbf m)$ (the one on which $U_\gamma^{\mathbf s}>0$ and the one where $U_\gamma^{\mathbf s}<0$).
This is a contradiction with the fact that $\mathbf s\in \mathtt V^{(1)}$.
Therefore $U_\gamma^{\mathbf s}$ has a sign on $E(\mathbf m)\cap B_0(\mathbf s,r)$ and %in view of Remark \ref{signl}, 
since it depends smoothly on $\gamma$ and cannot vanish on $E(\mathbf m)\cap B_0(\mathbf s,r)$, this sign does not depend on $\gamma$.
In particular, it is given by the sign of $U_0^{\mathbf s}$ on $E(\mathbf m)\cap B_0(\mathbf s,r)$ so taking $\ell^{\mathbf s}$ such that 
\begin{align}\label{x0}
\ell^{\mathbf s}\cdot (\phi_{\mathbf s}(x_0),v_0)>0
\end{align}
for some $(x_0,v_0)\in E(\mathbf m)\cap B_0(\mathbf s,r)$, we get that for each $\gamma\in [\gamma_1+\nu,1]$, the function $U_\gamma^{\mathbf s}$ is positive on $E(\mathbf m)\cap B_0(\mathbf s,r)$. %after changing this sign if needed (which is legit in view of the hypothesis we made on $\ell^{\mathbf s,h}$), 
We can then choose $\varepsilon(\delta)>0$ small enough so that 
\begin{align}\label{1.10}
\Big( \big(E(\mathbf m)+ B(0,\varepsilon) \big) \cap B_0(\mathbf s,r)\Big) \subseteq \big\{U_1^{\mathbf s} \geq -\delta\big\} \cap \big\{U_{\gamma_1+\nu}^{\mathbf s} \geq -\delta \big\}.
\end{align}
Similarly, if we denote $\Omega_{\mathbf s}$ the other CC of $\{W<\boldsymbol \sigma (\mathbf m)\}$ which contains $(\mathbf s,0)$ on its boundary, one can check that $\big( \phi^{-1}_{\mathbf s}(-\phi_{\mathbf s}(x_0)),-v_0 \big)\in \Omega_{\mathbf s}\cap B_0(\mathbf s,r)\cap \{U_0^{\mathbf s}<0\}$ 
where $(x_0,v_0)$ was introduced in \eqref{x0} so $U_\gamma^{\mathbf s}$ is negative %$\ell^{\mathbf s,h}$ is a negative function 
on $\Omega_{\mathbf s}\cap B_0(\mathbf s,r)$ and 
\begin{align}\label{1.10bis}
\Big( \big(\Omega_{\mathbf s}+B(0, \varepsilon) \big) \cap B_0(\mathbf s,r)\Big) \subseteq \big\{U_1^{\mathbf s} \leq \delta\big\} \cap \big\{U_{\gamma_1+\nu}^{\mathbf s} \leq \delta \big\}.
\end{align}
Choosing once again $\varepsilon(r)$ small enough, we can even assume that 
\begin{align}\label{1.11}
\Big( \overline{E(\mathbf m)+B(0,\varepsilon)}\,  \cap \,   \overline{\Omega_{\mathbf s}+B(0,\varepsilon)} \Big) \subseteq \mathbf j^W(\mathbf m)+ B_0(0,r)
\end{align}
(see \cite{me}, Lemma 3.2 for more details).
We first prove that $\theta^{\mathbf m}_{\nu,h}$ is smooth on $F_1\cap (\Omega+B(0,\varepsilon))$: let $\mathbf s\in \mathbf j(\mathbf m)$ and $(x,v) \in B_0(\mathbf s,r)\cap \{U_1^{\mathbf s}= 2\delta\}\cap \{U_{\gamma_1+\nu}^{\mathbf s}\geq 2\delta\}\cap (\Omega+B(0,\varepsilon))$.
According to \eqref{1.10bis}, there exists a small ball $B$ centered in $(x,v)$ such that 
$$B\subset \Big(B_0(\mathbf s,r)\cap \{U_1^{\mathbf s}>\delta\}\cap \{U_{\gamma_1+\nu}^{\mathbf s}> \delta\}\cap \big(E(\mathbf m)+B(0,\varepsilon)\big)\Big).$$
Thus, according to \eqref{thetainte}, \eqref{theta1} and \eqref{antoine} with $\delta$ instead of $2\delta$, we have $\theta^{\mathbf m}_{\nu,h}=1$ on $B$ so $\theta^{\mathbf m}_{\nu,h}$ is smooth at $(x,v)$.
Obviously, the same goes for $F_2\cap (\Omega+B(0,\varepsilon))$ and similarly, for $(x,v) \in (F_3\cup F_4)\cap (\Omega+B(0,\varepsilon))$, we can show that $\theta^{\mathbf m}_{\nu,h}=0$ in a neighborhood of $(x,v)$.\\
Now we show that $F_5$ does not meet $\Omega+B(0,\varepsilon)$.
Recall that $\Omega$ denotes the CC of $\{W\leq \boldsymbol \sigma(\mathbf m)\}$ containing $\mathbf m$.
For $\mathbf s\in \mathbf j(\mathbf m)$, we can deduce from \eqref{zerodel} that if $(\gamma,x,v)\in [\gamma_1+\nu,1]\times\partial B_0(\mathbf s,r)$ is such that $U_\gamma^{\mathbf s}(x,v)=0$, then $(x,v) \notin  \Omega$.
Hence $(\gamma,x,v)\mapsto|U_\gamma^{\mathbf s}(x,v)|$ must attain a positive minimum on $[\gamma_1+\nu,1]\times(\partial B_0(\mathbf s,r)\cap  \Omega)$, so we can choose $\delta(r,\nu)>0$ independent of $\gamma$ such that for all $\gamma\in[\gamma_1+\nu,1]$, the set $\partial B_0(\mathbf s,r)\cap \{|U_\gamma^{\mathbf s}|\leq 2 \delta\}$ does not intersect $ \Omega$.
It follows that we can choose $\varepsilon (\delta)>0$ such that 
\begin{align*}%\label{F1}
F_5 \subseteq \big( \R^{2d} \backslash \overline{\Omega+B(0,\varepsilon)} \big).% \subseteq \big( \R^{2d} \backslash  \mathrm{supp }\,\chi_{\mathbf m} \big).
\end{align*}
It only remains to prove that, as for $F_5$, the set $F_6$ does not meet $\Omega+B(0,\varepsilon)$.
If $(x,v)\in F_6 \cap B_0(\mathbf s,r)$, \eqref{1.10} and \eqref{antoine} imply that $(x,v) \in \{U^{\mathbf s}_1 \geq 2\delta\}\cap \{U^{\mathbf s}_{\gamma_1+\nu} \geq 2\delta\}$ so using \eqref{1.10bis}, we see that $(x,v)$ is outside $\Omega_{\mathbf s}+B(0,\varepsilon)$.
Since it is not in $(E(\mathbf m)+B(0,\varepsilon))$ either, it is outside $\Omega+B(0,\varepsilon)$. % which contains the support of $\chi_\mathbf m$.
Now if $(x,v)\in F_6 \backslash \big( \mathbf j^W(\mathbf m)+B_0(0,r)\big)$, \eqref{1.11} implies that $(x,v)$ is outside $\cup_{\mathbf j(\mathbf m)}(\Omega_{\mathbf s}+B(0,\varepsilon))$ so it is also outside $\Omega+B(0,\varepsilon)$ for $\varepsilon$ small enough and the proof is complete.
\end{proof}
\hip
%We will now work with the variable
%$$\tilde x_{\mathbf s}=\phi_{\mathbf s}(x)$$
%which we already use to
From now on, we fix the sign of $\ell^{\mathbf s}$ as well as $\varepsilon>0$ and $\delta>0$ such that the conclusion of Proposition \ref{thetalisse} holds true.
In particular, even though we do not make it appear in the notations, the functions $\chi_{\mathbf m}$ and $\zeta$ now depend on $\nu$.
Finally, we denote 
\begin{align}\label{wm}
W^{\mathbf m}(x,v)=W(x,v)-V(\mathbf m)/2
\end{align}
and it is clear from \eqref{chim} that
%$W_{\mathbf m}=
\begin{align}\label{suppchi}
W^{\mathbf m}\geq S(\mathbf m)+\tilde \varepsilon \qquad \text{on supp }\nabla \chi_{\mathbf m}.
\end{align}
%\end{rema}
Our quasimodes will be the $L^2$-renormalizations of the functions 
\begin{align}\label{quasim}
f_{\nu,h}^{\mathbf m}(x,v)=\chi_{\mathbf m}(x,v)\theta^{\mathbf m}_{\nu,h}(x,v)\e^{-W^{\mathbf m}(x,v)/h}\quad ; \quad  \mathbf m \in \mathtt U^{(0)}\backslash\{\underline{\mathbf m}\}%\in \mathcal C^{\infty}_c(\R^{2d})
\end{align}
and for $\mathbf m=\underline{\mathbf m}$, 
$$f_{\underline{\mathbf m},h}(x,v)=\e^{-W^{\underline{\mathbf m}}(x,v)/h}\in \mathrm{Ker}\, P_h.$$
Note that for $\mathbf m\neq \underline{\mathbf m}$, we have $f_{\nu,h}^{\mathbf m}\in \mathcal C^\infty_c(\R^{2d})$ thanks to Proposition \ref{thetalisse} and
\begin{align}\label{suppf}
\mathrm{supp}\, f_{\nu,h}^{\mathbf m} \subseteq E(\mathbf m)+B(0,\varepsilon')
\end{align}
where $\varepsilon'=\max(\varepsilon,r)$.

\subsection{Action of the operator $P_h$}

Let us fix $\mathbf m \in \mathtt U^{(0)}\backslash\{\underline{\mathbf m}\}$.
For $\gamma \in (\gamma_1,1]$, we will denote
\begin{align}\label{ytilde}
\widetilde W^{\mathbf m}_{\gamma}(x,v)=W^{\mathbf m}(x,v)+\frac12 \sum_{\mathbf s \in \mathbf j(\mathbf m)}L^{\mathbf s}_{\gamma}(x,v)^2.
\end{align}
%which coincides with $W^{\mathbf m}+(L_\gamma^{\mathbf s})^2/2$ on $B_0(\mathbf s,r)$.
For $\mathbf s\in \mathbf j(\mathbf m)$ and $x \in B(\mathbf s, \tilde r)$ we also denote
\begin{align}\label{thetatilde}
\tilde \theta^{\mathbf s}_{\gamma,h} (x,v)=\int_0^{L^{\mathbf s}_\gamma(x,v)}\e^{-\frac{s^2}{2h}}\D s.
\end{align}
We now have to compute $P_hf_{\nu,h}^{\mathbf m}$.
We will see fairly easily thanks to \eqref{nablatheta} that $X_0^h$ applied to $f_{\nu,h}^{\mathbf m}$ will yield a superposition of the exponentials 
\begin{align}\label{phasesx0}
\Big(\e^{-\widetilde W^{\mathbf m}_{\gamma}/h}\Big)_{\gamma\in [\gamma_1+\nu,1]}.
\end{align}
In view of \eqref{thetainte}, we see that the computation of $Q_hf_{\nu,h}^{\mathbf m}$ will essentially boil down to the one of $Q_h(\tilde \theta^{\mathbf s}_{\gamma,h}\e^{-W^{\mathbf m}/h})$ which we are already able to do thanks to Lemma \ref{actionq}:
\begin{align}
Q_h(\tilde \theta^{\mathbf s}_{\gamma,h}\e^{-W^{\mathbf m}/h})=-h \int_0^1 \partial_y \mathscr L^{\mathbf s}(\gamma,y)\,\exp\Big[-\frac 1h\Big(W^{\mathbf m}(x,v)+\frac12 \big[\mathscr L^{\mathbf s}(\gamma,y)\cdot (\tilde x_{\mathbf s},v)\big]^2\Big)\Big]\,\D y\,\cdot \begin{pmatrix}\tilde x_{\mathbf s}\\v\end{pmatrix}
\end{align}
where $\mathscr L^{\mathbf s}(\gamma,y)$ stands for the vector
\begin{align}\label{Ltilde}
\begin{pmatrix}
\frac{1+y}{\big(4|L_{\gamma,v}^{\mathbf s}|^2y+(y+1)^2\big)^{1/2}}\,L_{\gamma,x}^{\mathbf s} & ; & \frac{1-y}{\big(4|L_{\gamma,v}^{\mathbf s}|^2y+(y+1)^2\big)^{1/2}}\,L_{\gamma,v}^{\mathbf s}
\end{pmatrix}.
\end{align}
Here we disregarded the fact that the linear form $L_\gamma$ is twisted in $x$ as $Q_h$ only acts in $v$.
Our concern is now to see whether the functions 
$$\bigg(\exp\Big[-\frac 1h\Big(W^{\mathbf m}(x,v)+\frac12 \big[\mathscr L^{\mathbf s}(\gamma,y)\cdot (\tilde x_{\mathbf s},v)\big]^2\Big)\Big]\bigg)_{\gamma\in [\gamma_1+\nu,1]\,;\, y\in [0,1]}$$
belong to the family \eqref{phasesx0} as we hoped for some compensations between $X_0^hf_{\nu,h}^{\mathbf m}$ and $Q_hf_{\nu,h}^{\mathbf m}$.
It appears to be the case as, denoting for $\gamma\in (\gamma_1,1]$ and $y\in [0,1]$
\begin{align}\label{Gamma}
\Gamma_\gamma(y)=\frac{y+\gamma}{1+y\gamma},
\end{align}
an easy computation shows that
\begin{align}\label{Lcircgamma}
\mathscr L^{\mathbf s}(\gamma,y)=\big(L_{\Gamma_\gamma(y),x}^{\mathbf s} \, ; \, L_{\Gamma_\gamma(y),v}^{\mathbf s}\big).
\end{align}
We sum up the above discussion in the following updated version of Lemma \ref{actionq}.

\begin{lem}\label{actionq2}
With the notations \eqref{thetatilde}, \eqref{Gamma} and \eqref{ytilde}, we have%for $|x-\mathbf s|<r$ and $|v|<r$
%$$Q_h\varphi(x,v)=h\int_0^1 \frac{(1-y)\ell_v^2 \ell_x\cdot(x-\mathbf s)+(1+y)(1+\ell_v^2)\ell_v\cdot v}{\big(4y\ell_v^2+(y+1)^2\big)^{3/2}}\e^{-\frac{W(x,v)+\frac12 L_y^2(x,v)}{h}}\,\D y$$
$$h\, \mathrm{Op}_h(g)\Big( \e^{-\frac{\widetilde W^{\mathbf m}_{\gamma}}{h}}  L_{\gamma,v}^{\mathbf s} \Big)=Q_h(\tilde \theta^{\mathbf s}_{\gamma,h}\e^{-W^{\mathbf m}/h})(x,v)=
-h\int_0^1 \partial_y(L_{\Gamma_\gamma(y)})\,\e^{-\frac{\widetilde W^{\mathbf m}_{\Gamma_\gamma(y)}}{h}}\,\D y\,\cdot \begin{pmatrix}\tilde x_{\mathbf s}\\v\end{pmatrix}.$$
Moreover,
\begin{align}\label{opm2}
\text{ }\qquad \mathrm{Op}_h(m_{y,h} \, \mathrm{Id})\circ b_h\Big(\tilde \theta^{\mathbf s}_{\gamma,h}\e^{-\frac{W^{\mathbf m}(x,v)}{h}}\Big)&=2h(2\pi h)^{-d/2}\e^{-\frac{V(x)-V(\mathbf m)}{2h}} \frac{(y+1)^{d-2}}{(4y)^{\frac d2}}\\
		&\qquad  \quad\times \int_{v'\in \R^d} \e^{-\frac 1h \big(\frac{v'^2}{4}+\frac y8 (v+v')^2+\frac{(v-v')^2}{8y}+\frac12 L^{\mathbf s}_\gamma(x,v')^2\big)} \,\D v'  L_{\gamma,v}^{\mathbf s}.
\end{align}
\end{lem}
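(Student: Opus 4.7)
The proof is a close adaptation of that of Lemma \ref{actionq}, so I focus on the three new ingredients.

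For the leftmost equality, apply Corollary \ref{opgb} to write $Q_h = \mathrm{Op}_h(g_h) \circ b_h$. The key algebraic fact is that $b_h = h\partial_v + v/2$ annihilates $\e^{-W^{\mathbf m}/h}$, since $W^{\mathbf m}(x,v) = (V(x)-V(\mathbf m))/2 + v^2/4$ is precisely the Gaussian in $v$ with variance matched to $b_h$. Combined with the defining integral \eqref{thetatilde}, the Leibniz rule then yields
\begin{equation*}
b_h\bigl(\tilde\theta^{\mathbf s}_{\gamma,h}\,\e^{-W^{\mathbf m}/h}\bigr) \;=\; h\,\partial_v\tilde\theta^{\mathbf s}_{\gamma,h}\,\e^{-W^{\mathbf m}/h} \;=\; h\,L^{\mathbf s}_{\gamma,v}\,\e^{-\widetilde W^{\mathbf m}_\gamma/h},
\end{equation*}
where the final reduction uses that on the support of interest (a neighborhood of $\mathbf s$) only the summand indexed by $\mathbf s$ in $\widetilde W^{\mathbf m}_\gamma$ is nonzero, as the other saddles in $\mathbf j(\mathbf m)$ lie outside $B(\mathbf s,2\tilde r)$ by finiteness of $\mathtt U^{(1)}$.

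For the middle-to-right equality, I replay verbatim the computation \eqref{opgphaseell}--\eqref{dyl}, exploiting that $Q_h$ acts only in $v$ so that the Morse diffeomorphism $\phi_{\mathbf s}$ and the $x$-variable enter merely as parameters. Concretely, the substitutions $x-\mathbf s \mapsto \tilde x_{\mathbf s}$ and $\ell \mapsto (L^{\mathbf s}_{\gamma,x}, L^{\mathbf s}_{\gamma,v})$ transform the output of that computation into a $y$-integral of $\partial_y\mathscr L^{\mathbf s}(\gamma,y)\cdot(\tilde x_{\mathbf s},v)$ against the exponential $\exp\bigl[-\bigl(W^{\mathbf m} + \tfrac12[\mathscr L^{\mathbf s}(\gamma,y)\cdot(\tilde x_{\mathbf s},v)]^2\bigr)/h\bigr]$; the constant $V(\mathbf m)/2$ separating $W$ from $W^{\mathbf m}$ simply factors out of all integrands. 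To identify $\mathscr L^{\mathbf s}(\gamma,y)$ with $(L^{\mathbf s}_{\Gamma_\gamma(y),x}, L^{\mathbf s}_{\Gamma_\gamma(y),v})$ as claimed in \eqref{Lcircgamma}, I verify the algebraic identity: using \eqref{defL}, $|L^{\mathbf s}_{\gamma,v}|^2 = (1-\gamma)^2/P(\gamma)$ and the elementary relation $1+\Gamma_\gamma(y) = (1+y)(1+\gamma)/(1+y\gamma)$, the $x$-component equality reduces to $(1+y\gamma)^2 P(\Gamma_\gamma(y)) = 4(1-\gamma)^2y + (y+1)^2 P(\gamma)$, which in turn collapses to the polynomial identity $(y+\gamma)(1+y\gamma) = (1-\gamma)^2 y + \gamma(y+1)^2$ after using $P(X) = (X+1)^2+4X$; the $v$-component is identical in spirit.

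Finally, for \eqref{opm2}, I decompose $m_h = \int_0^1 m_{y,h}\,\D y$ as in Proposition \ref{facto} and treat one $y$-slice at a time. With $b_h\bigl(\tilde\theta^{\mathbf s}_{\gamma,h}\,\e^{-W^{\mathbf m}/h}\bigr) = h L^{\mathbf s}_{\gamma,v}\,\e^{-\widetilde W^{\mathbf m}_\gamma/h}$ already in hand, feeding it into $\mathrm{Op}_h(m_{y,h}\,\mathrm{Id})$ via its distributional kernel, factoring out the $x$-dependent exponential $\e^{-(V(x)-V(\mathbf m))/(2h)}$, and evaluating the Gaussian $\eta$-integral exactly as in the computation leading to \eqref{intv'} in Lemma \ref{actionq} directly produces the explicit $v'$-integral of \eqref{opm2}. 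I expect the main obstacle to be the algebraic identity in the previous paragraph: no individual manipulation is difficult, but the rational substitutions between $y$, $\gamma$, $\Gamma_\gamma(y)$ and their associated polynomials $P(\cdot)$ offer plenty of opportunities for bookkeeping errors.
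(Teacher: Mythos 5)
Your proposal is correct and follows essentially the same approach as the paper: the Lemma is stated in the paper as a summary of the discussion immediately preceding it, which adapts Lemma \ref{actionq} to the twisted linear form $L^{\mathbf s}_\gamma$ and identifies $\mathscr L^{\mathbf s}(\gamma,y)$ with $(L^{\mathbf s}_{\Gamma_\gamma(y),x},L^{\mathbf s}_{\Gamma_\gamma(y),v})$ via the rational identity you verify. Your observation that $b_h$ annihilates $\e^{-W^{\mathbf m}/h}$ makes explicit a step the paper leaves implicit at \eqref{opgphaseell}, and the rest — the $y$-slice decomposition of $m_h$ and the Gaussian $\eta$-integral — matches the paper's computation.
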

\hip
We are now in position to give a precise computation of $P_hf_{\nu,h}^{\mathbf m}$.

\begin{prop}\label{phf}
Let $f_{\nu,h}^{\mathbf m}$ be the quasimode defined in \eqref{quasim} and recall the notations \eqref{xtilde} and \eqref{ytilde}.
There exist some functions $R^{\mathbf m}_{\nu, h}$ and $(\om^{\mathbf m}_{\nu,z})_{z\in [\gamma_1+\nu,1]}$ in $L^2(\R^{2d})$ such that 
\begin{enumerate}
\item The function $P_hf_{\nu,h}^{\mathbf m}-R^{\mathbf m}_{\nu, h}$ is supported in $\mathbf j^W(\mathbf m)+B_0(0,r)$.
\item The function $R^{\mathbf m}_{\nu, h}$ is $O_{\nu,L^2}\Big(h^{\frac{3+d}{2}}\e^{-\frac{S(\mathbf m)}{h}}\Big)$.
\item For $(x,v)\in \mathbf j^W(\mathbf m)+B_0(0,r)$, one has
	$$\big(P_hf_{\nu,h}^{\mathbf m}-R^{\mathbf m}_{\nu, h}\big)(x,v)=\Big(\frac{h}{2\pi}\Big)^{1/2} \int_{\gamma_1+\nu}^1\om^{\mathbf m}_{\nu,z}(x,v)\,\exp \Big[-\frac 1h \widetilde W^{\mathbf m}_z(x,v)\Big]\D z$$
where, using the notation \eqref{hessiennes}, we have the expression
\begin{align*}
\om^{\mathbf m}_{\nu,z}(x,v)=\sum_{\mathbf s\in \mathbf j(\mathbf m)} \bigg[k_\nu^{\mathbf s}(z)\begin{pmatrix}0 & -\mathcal V_{\mathbf s}\\ \mathrm{Id} & 0 \end{pmatrix}\begin{pmatrix}L^{\mathbf s}_{z;x}\\L^{\mathbf s}_{z;v}\end{pmatrix}
-\int_{\gamma_1+\nu}^z k_\nu^{\mathbf s}(\gamma)\,\D \gamma \begin{pmatrix}\partial_z L^{\mathbf s}_{z;x}\\\partial_z L^{\mathbf s}_{z;v}\end{pmatrix}\bigg]
\cdot \begin{pmatrix}\tilde x_{\mathbf s}\\v\end{pmatrix}.
\end{align*}
\end{enumerate}
\end{prop}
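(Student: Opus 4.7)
The approach is to split $P_h f_{\nu,h}^{\mathbf m}=X_0^h f_{\nu,h}^{\mathbf m}+Q_h f_{\nu,h}^{\mathbf m}$, extract the main contribution on the ``saddle region'' $\bigcup_{\mathbf s\in\mathbf j(\mathbf m)}B_0(\mathbf s,r)$, and absorb everything else into $R^{\mathbf m}_{\nu,h}$. Since $X_0^h W=0$ (direct check from $W=V/2+v^2/4$), Leibniz gives
$$X_0^hf_{\nu,h}^{\mathbf m}=\chi_{\mathbf m}(X_0^h\theta^{\mathbf m}_{\nu,h})e^{-W^{\mathbf m}/h}+(X_0^h\chi_{\mathbf m})\theta^{\mathbf m}_{\nu,h}e^{-W^{\mathbf m}/h};$$
the second summand is supported on $\{W^{\mathbf m}\geq S(\mathbf m)+\tilde\varepsilon\}$ by \eqref{suppchi}, hence is $O_{L^2}(e^{-\alpha/h}e^{-S(\mathbf m)/h})$ and falls into $R$. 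For the first summand I would differentiate \eqref{thetainte} in the saddle region and use the Gaussian concentration $|L^{\mathbf s}_\gamma|=O(\sqrt h)$ to drop the cutoff $\zeta$ modulo $O(e^{-\alpha/h})$. In the Morse coordinates $\tilde x_{\mathbf s}$ from \eqref{xtilde}, a Taylor expansion at $(\mathbf s,0)$ yields
$$X_0^hL^{\mathbf s}_\gamma=h\begin{pmatrix}0 & -\mathcal V_{\mathbf s}\\\mathrm{Id} & 0\end{pmatrix}\begin{pmatrix}L^{\mathbf s}_{\gamma;x}\\L^{\mathbf s}_{\gamma;v}\end{pmatrix}\cdot\begin{pmatrix}\tilde x_{\mathbf s}\\v\end{pmatrix}+h\cdot O\big(|(\tilde x_{\mathbf s},v)|^2\big),$$
the linear part producing the first term of $\omega^{\mathbf m}_{\nu,z}$ (after the prefactor $\tfrac12(A^{\mathbf s}_{\nu,h})^{-1}h=\sqrt{h/(2\pi)}(1+O(e^{-\alpha/h}))$ from \eqref{approxa}), and the quadratic correction producing an $L^2$ error of order $O(h^{(5+d)/2}e^{-S(\mathbf m)/h})$ by Laplace's method, again absorbed in $R$.

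For $Q_h f_{\nu,h}^{\mathbf m}$, the crucial observation is that $e^{-W^{\mathbf m}/h}\in\mu_h L^2(\R^d_x)$, so $Q_h(e^{-W^{\mathbf m}/h})=0$. Outside the saddle region but on $\mathrm{supp}\,f_{\nu,h}^{\mathbf m}$, $\theta^{\mathbf m}_{\nu,h}$ is locally constant equal to $0$ or $1$; combined with the $v$-nonlocality of $Q_h$, the stability of $e^{-W^{\mathbf m}/h}$ under $\Pi_h$, and the exponential decay of $\mu_h$ for $|v|\geq r$, a routine computation then shows $Q_h f_{\nu,h}^{\mathbf m}=O_{L^2}(e^{-\alpha/h}e^{-S(\mathbf m)/h})$, going into $R$. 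Inside the saddle region near $\mathbf s$ we have $\chi_{\mathbf m}\equiv 1$, and \eqref{thetainte} rewrites
$$\theta^{\mathbf m}_{\nu,h}=\tfrac12+\tfrac12(A^{\mathbf s}_{\nu,h})^{-1}\int_{\gamma_1+\nu}^1 k^{\mathbf s}_\nu(\gamma)\,\tilde\theta^{\mathbf s}_{\gamma,h}\,\D\gamma+O(e^{-\alpha/h})$$
after replacing the $\zeta$-cutoff integral by $\tilde\theta^{\mathbf s}_{\gamma,h}$ from \eqref{thetatilde} via the same Gaussian concentration argument. The constant $\tfrac12$ is killed by $Q_h$, while for each $\gamma$ Lemma \ref{actionq2} computes
$$Q_h\big(\tilde\theta^{\mathbf s}_{\gamma,h}e^{-W^{\mathbf m}/h}\big)=-h\int_0^1\partial_y L^{\mathbf s}_{\Gamma_\gamma(y)}\cdot\begin{pmatrix}\tilde x_{\mathbf s}\\v\end{pmatrix}e^{-\widetilde W^{\mathbf m}_{\Gamma_\gamma(y)}/h}\,\D y.$$

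It remains to merge the $(\gamma,y)$-integration into a single integration in $z$. Setting $z=\Gamma_\gamma(y)$ from \eqref{Gamma}, a direct computation gives the identity $\partial_y L^{\mathbf s}_{\Gamma_\gamma(y)}\D y=\partial_z L^{\mathbf s}_z\D z$, and for fixed $z\in[\gamma_1+\nu,1]$ the variable $\gamma$ ranges over $[\gamma_1+\nu,z]$ since $y(\gamma,z)=(z-\gamma)/(1-z\gamma)$ lies in $[0,1]$ exactly when $\gamma\leq z\leq 1$. Fubini's theorem then transforms the $Q_h$-contribution into
$$-\sqrt{h/(2\pi)}\int_{\gamma_1+\nu}^1\Big(\int_{\gamma_1+\nu}^z k^{\mathbf s}_\nu(\gamma)\,\D\gamma\Big)\partial_z L^{\mathbf s}_z\cdot\begin{pmatrix}\tilde x_{\mathbf s}\\v\end{pmatrix}e^{-\widetilde W^{\mathbf m}_z/h}\,\D z,$$
which is precisely the second term of $\omega^{\mathbf m}_{\nu,z}$. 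Summing over $\mathbf s\in\mathbf j(\mathbf m)$, whose local balls are pairwise disjoint by Hypothesis \ref{jvide}(b) and \eqref{1.11}, delivers the announced formula. The main obstacle I foresee is controlling all the exponentially small remainders (from $\zeta$-dropping, Taylor expansion, and the nonlocal action of $Q_h$ away from saddles) uniformly in $\gamma\in[\gamma_1+\nu,1]$; this uniformity rests on the lower bound $N^{\mathbf s}(\gamma)\geq 1/C$ and on the $O(\sqrt h)$-localization in $v$ imposed by $\mu_h$.
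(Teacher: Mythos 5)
Your proposal is correct and follows essentially the same path as the paper: split $P_h = X_0^h + Q_h$, isolate the contribution from the saddle balls, push the $\nabla\chi_{\mathbf m}$ pieces into the remainder via \eqref{suppchi}, Taylor-expand the transport term in Morse coordinates to get the $\begin{pmatrix}0 & -\mathcal V_{\mathbf s}\\ \mathrm{Id} & 0\end{pmatrix}$ block, apply Lemma~\ref{actionq2} and the change of variable $z=\Gamma_\gamma(y)$ followed by Fubini to merge the $(\gamma,y)$-integration in the collision term. Two small remarks on precision, neither a gap: the ``routine computation'' showing the nonlocal tail of $Q_hf^{\mathbf m}_{\nu,h}$ (in the region $|v|\geq r$) is negligible corresponds exactly to the $\1_{B_0(\mathbf s,r)}=\1_{|x-\mathbf s|<\tilde r}+\rho_\gamma$ decomposition in the paper's \eqref{indic} together with \eqref{rtilde}, which is worth spelling out since $\mathrm{Op}_h(g)$ is local only in $x$; and your $O(h^{(5+d)/2}\e^{-S/h})$ bound for the Taylor quadratic correction in $X_0^h$ is too optimistic by a factor $h$ (it is $O(h^{(3+d)/2}\e^{-S/h})$, the same order as $R^{\mathbf m}_{\nu,h}$, and still absorbed).
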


\begin{proof}
In order to lighten the notations, we will drop some of the exponents and indexes $\mathbf m$, $\mathbf s$, $\nu$ and $h$ in the proof.
We know that $\theta$ is smooth on the support of $\chi$ and since $\theta$ is constant outside of $\mathbf j^W(\mathbf m)+B_0(0,r)$, we have
\begin{align}\label{nablatheta}
\nabla \theta=\frac{1}{2}\sum_{\mathbf s\in \mathbf j(\mathbf m)} (A_h^{\mathbf s})^{-1} \int_{\gamma_1+\nu}^{1}k^{\mathbf s}(\gamma)\zeta(U^{\mathbf s}_\gamma)\e^{-(L_\gamma^{\mathbf s})^2/2h}\,\nabla L_\gamma^{\mathbf s} \, \1_{B_0(\mathbf s,r)} \,\D \gamma .
\end{align}
%Here we have to put the indicator function because $\zeta(U^{\mathbf s}_\gamma)\nabla L_\gamma^{\mathbf s}$ might have some support in %without it, it would not be clear that $\nabla \theta$ has no support in 
%$B_0(\mathbf s,2r)\backslash B_0(\mathbf s,r)$.
Using Corollary \ref{opgb}, we can then begin by computing
\begin{align}
\qquad \quad Q_h(f)&=h\mathrm{Op}_h(g)\big((\partial_v \theta) \chi \e^{-W^{\mathbf m}/h}+(\partial_v\chi)\theta \e^{-W^{\mathbf m}/h}\big)\label{qfdecompo}\\
			&=\frac h2   \sum_{\mathbf s\in \mathbf j(\mathbf m)} (A_h^{\mathbf s})^{-1} \int_{\gamma_1+\nu}^{1}k^{\mathbf s}(\gamma)  \mathrm{Op}_h(g)\Big( \chi \zeta(U^{\mathbf s}_\gamma) \, \e^{-\frac{\widetilde W^{\mathbf m}_{\gamma}}{h}} \1_{B_0(\mathbf s,r)}  L_{\gamma,v}^{\mathbf s} \Big) \,\D \gamma+O_\nu\Big(h\e^{-\frac{S(\mathbf m)+\tilde \varepsilon}{h}}\Big) \label{qf}
\end{align}
as $\chi$ now depends on $\nu$, where we used \eqref{suppchi} as well as the fact that $\mathrm{Op}_h(g)$ is bounded uniformly in $h$ since $g \in S^{1/2}(\langle (v,\eta) \rangle^{-1})$.
Now, since $\chi \zeta(U^{\mathbf s}_\gamma)-1=O_\nu\big((x-\mathbf s,v)^2\big)$, we have
%by the change of variables $(u,w)=\mathrm{Hess}_{(\mathbf s,0)}\widetilde{W}_{\gamma}^{\mathbf m}\big(\phi(x),v\big)$ and using
thanks to Lemma \ref{bonmin} and by a standard Laplace method (see Proposition \ref{lap}) that
\begin{align}\label{chizeta-1}
\big(\chi \zeta(U^{\mathbf s}_\gamma) -1\big) \e^{-\frac{\widetilde W^{\mathbf m}_{\gamma}}{h}} \1_{B_0(\mathbf s,r)}  \nabla L_{\gamma}^{\mathbf s}=O_\nu\Big(h^{1+\frac d2}\e^{-\frac{S(\mathbf m)}{h}}\Big).
\end{align}
Hence, still by the boundedness of $\mathrm{Op}_h(g)$, we get that
\begin{align}\label{chizeta}
\mathrm{Op}_h(g)\Big( \chi \zeta(U^{\mathbf s}_\gamma) \, \e^{-\frac{\widetilde W^{\mathbf m}_{\gamma}}{h}} \1_{B_0(\mathbf s,r)}  L_{\gamma,v}^{\mathbf s} \Big)=\mathrm{Op}_h(g)\Big( \e^{-\frac{\widetilde W^{\mathbf m}_{\gamma}}{h}} \1_{B_0(\mathbf s,r)}  L_{\gamma,v}^{\mathbf s} \Big)+O_\nu\Big(h^{1+\frac d2}\e^{-\frac{S(\mathbf m)}{h}}\Big).
\end{align}
In the same spirit, we can write
$$\1_{B_0(\mathbf s,r)}  L_{\gamma,v}^{\mathbf s}=\1_{|x-\mathbf s|<\tilde r}(\1_{|v|<r}-1+1)  L_{\gamma,v}^{\mathbf s}=\1_{|x-\mathbf s|<\tilde r}\, L_{\gamma,v}^{\mathbf s} +\rho_\gamma$$
with $\rho_\gamma$ supported in $\{(x,v); \, |x-\mathbf s|<\tilde r \text{ and }|v|\geq r\}$ and such that $\|\rho_\gamma\|_{\infty}\leq C_\nu$, so using the boudedness of $\mathrm{Op}_h(g)$ again and the fact that it is local in the variable $x$, as well as \eqref{rtilde}, we get
\begin{align}\label{indic}
\mathrm{Op}_h(g)\Big( \e^{-\frac{\widetilde W^{\mathbf m}_{\gamma}}{h}} \1_{B_0(\mathbf s,r)}  L_{\gamma,v}^{\mathbf s} \Big)=\mathrm{Op}_h(g)\Big( \e^{-\frac{\widetilde W^{\mathbf m}_{\gamma}}{h}} L_{\gamma,v}^{\mathbf s} \Big)\1_{|x-\mathbf s|<\tilde r}+O_\nu\Big(h^{1+\frac d2}\e^{-\frac{S(\mathbf m)}{h}}\Big).
\end{align}
Hence, putting \eqref{chizeta} and \eqref{indic} together and using \eqref{approxa}, we get that \eqref{qf} becomes
\begin{align}\label{bhfsimple}
Q_h(f)=\Big(\frac {h}{2\pi}\Big)^{1/2}  \sum_{\mathbf s\in \mathbf j(\mathbf m)}  \int_{\gamma_1+\nu}^{1} k^{\mathbf s}(\gamma)  \mathrm{Op}_h(g)\Big( \e^{-\frac{\widetilde W^{\mathbf m}_{\gamma}}{h}}  L_{\gamma,v}^{\mathbf s} \Big) \D \gamma\, \1_{|x-\mathbf s|<\tilde r}+O_\nu\Big(h^{\frac{3+d}{2}}\e^{-\frac{S(\mathbf m)}{h}}\Big)
\end{align}
%Now the computation of $ \mathrm{Op}_h(g)\big( \exp(-\widetilde W^{\mathbf m}_{\gamma}/h)   L_{\gamma,v}^{\mathbf s} \big)$ is almost identical to the one we made in Lemma \ref{actionq} up to the fact that the linear form in the square from \eqref{ytilde} is twisted in the variable $x$.
%However, this difference can be overlooked as $Q_h$ and $\mathrm{Op}_h(g)$ only act in the variable $v$.
%Thus, by \eqref{opgphaseell} and Lemma \ref{actionq},
which further gives
\begin{align}\label{expltilde}
Q_h&(f)+O_\nu\Big(h^{\frac{3+d}{2}}\e^{-\frac{S(\mathbf m)}{h}}\Big)=\\
			&-\Big(\frac {h}{2\pi}\Big)^{1/2}  \sum_{\mathbf s\in \mathbf j(\mathbf m)}  \int_{\gamma_1+\nu}^{1} k^{\mathbf s}(\gamma)  \int_0^1 \partial_y(L_{\Gamma_\gamma(y)})\,\exp\Big[-\frac 1h \widetilde W^{\mathbf m}_{\Gamma_\gamma(y)}\Big]\,\D y\,\cdot \begin{pmatrix}\tilde x_{\mathbf s}\\v\end{pmatrix}       \D \gamma\,\1_{|x-\mathbf s|<\tilde r} \nonumber
\end{align}
thanks to Lemma \ref{actionq2}.
%where $\mathscr L^{\mathbf s}(\gamma,y)$ stands for the vector
%\begin{align}\label{Ltilde}
%\begin{pmatrix}
%\frac{1+y}{\big(4|L_{\gamma,v}^{\mathbf s}|^2y+(y+1)^2\big)^{1/2}}\,L_{\gamma,x}^{\mathbf s} & ; & \frac{1-y}{\big(4|L_{\gamma,v}^{\mathbf s}|^2y+(y+1)^2\big)^{1/2}}\,L_{\gamma,v}^{\mathbf s}.
%\end{pmatrix}
%\end{align}
%Now for $\gamma\in (\gamma_1,1]$ and $y\in [0,1]$, denote
%\begin{align}\label{Gamma}
%\Gamma_\gamma(y)=\frac{y+\gamma}{1+y\gamma}
%\end{align}
%and observe that 
%\begin{align}\label{Lcircgamma}
%\mathscr L^{\mathbf s}(\gamma,y)=\big(L_{\Gamma_\gamma(y),x}^{\mathbf s} \, ; \, L_{\Gamma_\gamma(y),v}^{\mathbf s}\big). %\;\text{\color{red}merite un Lemme?\color{black}}
%\end{align}
%Thus, the integral in $y$ from \eqref{expltilde} can be written
%$$\int_0^1 \partial_y\big( L^{\mathbf s}_{\Gamma_\gamma(y)}\big)\,\exp\Big[- \widetilde W^{\mathbf m}_{\Gamma_\gamma(y)}(x,v)/h
%%+\frac12 \widetilde L^{\mathbf s}_{\gamma,y}(\tilde x_{\mathbf s},v)^2
%\Big]\,\D y$$
By the change of variable $z=\Gamma_\gamma(y)$, the integral in $y$ from \eqref{expltilde} becomes
$$\int_\gamma^1 \partial_z( L^{\mathbf s}_{z})\,\exp\Big[- \widetilde W^{\mathbf m}_{z}(x,v)/h
%+\frac12 \widetilde L^{\mathbf s}_{\gamma,y}(\tilde x_{\mathbf s},v)^2
\Big]\,\D z.$$
Therefore, switching the order of integration and using \eqref{rtilde} again, \eqref{expltilde} yields that up to a \\$O_{\nu,L^2}(h^{\frac{3+d}{2}}\e^{-S(\mathbf m)/h})$, the function $Q_h(f)$ satisfies
\begin{align}\label{qffinal}
Q_h(f)(x,v)&=-\Big(\frac {h}{2\pi}\Big)^{1/2}  \sum_{\mathbf s\in \mathbf j(\mathbf m)} \int_{\gamma_1+\nu}^1  \int_{\gamma_1+\nu}^{z} k^{\mathbf s}(\gamma) \, \D \gamma \, \partial_z( L^{\mathbf s}_{z})\,  \cdot \begin{pmatrix}\tilde x_{\mathbf s}\\v\end{pmatrix}      \e^{-\frac{\widetilde W^{\mathbf m}_{z}(x,v)}{h}}%\exp\Big[- \widetilde W^{\mathbf m}_{z}(x,v)/h\Big] 
 \D z \,\1_{B_0(\mathbf s, r)}(x,v)%&\qquad+O_\nu\Big(h^{3/2}\e^{-\frac{S(\mathbf m)}{h}}\Big).\nonumber
\end{align}
Now the computation for the transport term is easier: according to \eqref{nablatheta}, we have
\begin{align}\label{x0f}
X_0^hf&=h\begin{pmatrix}
v \\
-\partial_x V 
\end{pmatrix} \cdot \nabla f \nonumber\\
			&=h\begin{pmatrix}
v \\
-\partial_x V 
\end{pmatrix} \cdot \nabla \theta \,\chi \e^{-W^{\mathbf m}/h}+h\begin{pmatrix}
v \\
-\partial_x V 
\end{pmatrix} \cdot \nabla \chi \,\theta \e^{-W^{\mathbf m}/h}\\
			&=\frac h2 \chi  \sum_{\mathbf s\in \mathbf j(\mathbf m)} (A_h^{\mathbf s})^{-1} \int_{\gamma_1+\nu}^{1}k^{\mathbf s}(z)\zeta(U^{\mathbf s}_z)\,\begin{pmatrix}
v \\
-\partial_x V 
\end{pmatrix}\cdot \nabla L_z^{\mathbf s}\, \e^{-\frac{\widetilde W^{\mathbf m}_{z}}{h}} \1_{B_0(\mathbf s,r)} \,\D z  +O_\nu\Big(h\e^{-\frac{S(\mathbf m)+\tilde \varepsilon}{h}}\Big) \nonumber
\end{align}
thanks to \eqref{suppchi}.
Here again, we can use \eqref{approxa} and \eqref{chizeta-1} to get 
\begin{align}%\label{x0f}
X_0^hf=\Big(\frac {h}{2\pi}\Big)^{1/2}  \sum_{\mathbf s\in \mathbf j(\mathbf m)}  \int_{\gamma_1+\nu}^{1}k^{\mathbf s}(z)\,\begin{pmatrix}
v \\
-\partial_x V 
\end{pmatrix}\cdot \nabla L_z^{\mathbf s}\, \e^{-\frac{\widetilde W^{\mathbf m}_{z}}{h}} \1_{B_0(\mathbf s,r)} \,\D z  +O_\nu\Big(h^{\frac{3+d}{2}}\e^{-\frac{S(\mathbf m)}{h}}\Big). 
\end{align}
Recalling that the differential of $\phi_{\mathbf s}$ at $\mathbf s$ is the identity, the last step consists in using \eqref{phi} to write
\begin{align}
\begin{pmatrix}
v \\
-\partial_x V 
\end{pmatrix}\cdot \nabla L_z^{\mathbf s}&=\begin{pmatrix}
0&\mathrm{Id}\\
-\mathcal V_{\mathbf s} & 0
\end{pmatrix}
\begin{pmatrix}
\tilde x_{\mathbf s}\\
v
\end{pmatrix}
\cdot
\begin{pmatrix}
 L_{z,x}^{\mathbf s}\\
L_{z,v}^{\mathbf s}
\end{pmatrix}
+O_\nu\Big((\tilde x_{\mathbf s},v)^2\Big)\\
	&=\begin{pmatrix}
0&-\mathcal V_{\mathbf s}\\
\mathrm{Id} & 0
\end{pmatrix}
\begin{pmatrix}
L_{z,x}^{\mathbf s}\\
L_{z,v}^{\mathbf s}
\end{pmatrix}
\cdot
\begin{pmatrix}
\tilde x_{\mathbf s}\\
v
\end{pmatrix}
+O_\nu\Big((\tilde x_{\mathbf s},v)^2\Big)
\end{align}
and the same argument that we used to establish \eqref{chizeta-1} yields
that up to a $O_{\nu,L^2}(h^{\frac{3+d}{2}}\e^{-S(\mathbf m)/h})$, the function $X_0^hf$ satisfies
\begin{align}\label{x0ffinal}
\text{ }\qquad X_0^hf (x,v)=\Big(\frac {h}{2\pi}\Big)^{1/2}  \sum_{\mathbf s\in \mathbf j(\mathbf m)}  \int_{\gamma_1+\nu}^{1}k^{\mathbf s}(z)
\begin{pmatrix}
0&-\mathcal V_{\mathbf s}\\
\mathrm{Id} & 0
\end{pmatrix}
\begin{pmatrix}
L_{z,x}^{\mathbf s}\\
L_{z,v}^{\mathbf s}
\end{pmatrix}
\cdot
\begin{pmatrix}
\tilde x_{\mathbf s}\\
v
\end{pmatrix}
\e^{-\frac{\widetilde W^{\mathbf m}_{z}(x,v)}{h}}  \,\D z  \1_{B_0(\mathbf s,r)}(x,v).%+O_\nu\Big(h^{\frac{3+d}{2}}\e^{-\frac{S(\mathbf m)}{h}}\Big). 
\end{align}
The conclusion follows from \eqref{qffinal} and \eqref{x0ffinal}.
\end{proof}

\begin{rema}\label{Ph*f}
Since $P_h^*=-X_0^h+Q_h$, it is clear from \eqref{qffinal} and \eqref{x0ffinal} that 
$$P_h^*f^{\mathbf m}_{\nu,h}=\Big(\frac{h}{2\pi}\Big)^{1/2} \int_{\gamma_1+\nu}^1\overset{*}{\om}{}^{\mathbf m}_{\nu,z}(x,v)\,\exp \Big[-\frac 1h \widetilde W^{\mathbf m}_z(x,v)\Big]\D z+O_{\nu,L^2}\Big(h^{\frac{3+d}{2}}\e^{-\frac{S(\mathbf m)}{h}}\Big)$$
with 
\begin{align*}
\overset{*}{\om}{}^{\mathbf m}_{\nu,z}(x,v)=-\sum_{\mathbf s\in \mathbf j(\mathbf m)} \bigg[k_\nu^{\mathbf s}(z)\begin{pmatrix}0 & -\mathcal V_{\mathbf s}\\ \mathrm{Id} & 0 \end{pmatrix}\begin{pmatrix}L^{\mathbf s}_{z;x}\\L^{\mathbf s}_{z;v}\end{pmatrix}
+\int_{\gamma_1+\nu}^z k_\nu^{\mathbf s}(\gamma)\,\D \gamma \begin{pmatrix}\partial_z L^{\mathbf s}_{z;x}\\\partial_z L^{\mathbf s}_{z;v}\end{pmatrix}\bigg]
\cdot \begin{pmatrix}\tilde x_{\mathbf s}\\v\end{pmatrix}\1_{\mathbf j^W(\mathbf m)+B_0(0,r)} (x,v).
\end{align*}
\end{rema}

\hop
\subsection{Choices of $\ell$ and $k$}

Following the steps from \cite{me,BonyLPMichel}, we would like in view of Proposition \ref{phf} to find $(\ell^{\mathbf s})_{\mathbf s\in \mathbf j(\mathbf m)}\subset \R^{2d}$ satisfying \eqref{lv2} and \eqref{-Vlxlx} as well as some probability densities $(k_\nu^{\mathbf s})_{\mathbf s\in \mathbf j(\mathbf m)}$ on $[\gamma_1+\nu, 1]$ for which the leading term of $P_hf_{\nu,h}^{\mathbf m}$ vanishes, i.e such that 
\begin{align}\label{eikon}
k_\nu^{\mathbf s}(z)\begin{pmatrix}0 & -\mathcal V_{\mathbf s}\\ \mathrm{Id} & 0 \end{pmatrix}\begin{pmatrix}L^{\mathbf s}_{z;x}\\L^{\mathbf s}_{z;v}\end{pmatrix}
-\int_{\gamma_1+\nu}^z k_\nu^{\mathbf s}(\gamma)\,\D \gamma \begin{pmatrix}\partial_z L^{\mathbf s}_{z;x}\\\partial_z L^{\mathbf s}_{z;v}\end{pmatrix}=0, \quad \; \forall \mathbf s\in \mathbf j(\mathbf m),\;  \forall z \in [\gamma_1+\nu, 1].
\end{align}
\hip
As it will be more convenient to handle than the function $k_\nu^{\mathbf s}$, let us introduce the cumulative distribution function (CDF) on $[\gamma_1+\nu,1]$ associated to $k_\nu^{\mathbf s}$ :
\begin{align}\label{K}
K^{\mathbf s}_\nu (z)=\int_{\gamma_1+\nu}^z k_\nu^{\mathbf s}(\gamma)\,\D \gamma .
\end{align}
\begin{lem}\label{sisol}
%For $\mathbf m \in \mathtt U^{(0)}\backslash\{\underline{\mathbf m}\}$ and $\mathbf s\in \mathbf j(\mathbf m)$,
Recall the notations \eqref{hessiennes}-\eqref{tau}.
If $(\ell^{\mathbf s})_{\mathbf s\in \mathbf j(\mathbf m)}$ is a family of vectors satisfying \eqref{lv2} and $(k_\nu^{\mathbf s})_{\mathbf s\in \mathbf j(\mathbf m)}$ is a family of probability densities on $[\gamma_1+\nu, 1]$ for which \eqref{eikon} holds true, then
\begin{align}\label{choixl}
\mathcal V_{\mathbf s}\ell_v^{\mathbf s}=\tau_{\mathbf s} \ell_v^{\mathbf s}\qquad ;\qquad 
\ell_x^{\mathbf s}=-\sqrt{2|\tau_{\mathbf s}|}\ell_v^{\mathbf s}
\end{align}
(in particular, $\ell_x^{\mathbf s}$ satisfies \eqref{-Vlxlx}) and the function $K_\nu^{\mathbf s}$ defined in \eqref{K} is a CDF on $[\gamma_1+\nu,1]$ satisfying the ODE
\begin{align}\label{ode}
(K_\nu^{\mathbf s})'(z)-\frac{2\sqrt 2}{\sqrt{|\tau_{\mathbf s}|}P(z)}K_\nu^{\mathbf s}(z)=0.
\end{align}
%\end{enumerate}
\end{lem}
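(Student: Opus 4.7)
The plan is to project the vector equation \eqref{eikon} onto its two $\R^d$-components and exploit the fact that \eqref{defL} makes $L_{z;x}^{\mathbf s}$ and $L_{z;v}^{\mathbf s}$ explicit scalar multiples of $\ell_x^{\mathbf s}$ and $\ell_v^{\mathbf s}$ respectively. First, I would compute the $z$-derivatives of the coefficients $(1+z)P(z)^{-1/2}$ and $(1-z)P(z)^{-1/2}$ using $P(z)=z^2+6z+1$, which yields tidy expressions proportional to $(z-1)P^{-3/2}$ and $-(z+1)P^{-3/2}$. Writing $k_\nu^{\mathbf s}=(K_\nu^{\mathbf s})'$, the bottom component of \eqref{eikon} then reads
\begin{equation*}
(K_\nu^{\mathbf s})'(z)\,\frac{1+z}{P(z)^{1/2}}\,\ell_x^{\mathbf s}+K_\nu^{\mathbf s}(z)\,\frac{4(z+1)}{P(z)^{3/2}}\,\ell_v^{\mathbf s}=0,
\end{equation*}
which (since $z>\gamma_1>-1$) forces $\ell_x^{\mathbf s}$ to be collinear to $\ell_v^{\mathbf s}$, precisely $\ell_x^{\mathbf s}=-\tfrac{4K_\nu^{\mathbf s}(z)}{P(z)(K_\nu^{\mathbf s})'(z)}\ell_v^{\mathbf s}$.

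Next I would plug this into the top component of \eqref{eikon}, which after the same kind of simplification becomes $(K_\nu^{\mathbf s})'(z)\,\mathcal V_{\mathbf s}\ell_v^{\mathbf s}=\tfrac{2K_\nu^{\mathbf s}(z)}{P(z)}\ell_x^{\mathbf s}$. Combined with the relation obtained from the bottom line, this yields
\begin{equation*}
\mathcal V_{\mathbf s}\ell_v^{\mathbf s}=-\frac{8\,K_\nu^{\mathbf s}(z)^2}{P(z)^2\,(K_\nu^{\mathbf s})'(z)^2}\,\ell_v^{\mathbf s},
\end{equation*}
so $\ell_v^{\mathbf s}$ is an eigenvector of $\mathcal V_{\mathbf s}$ and the scalar on the right must be independent of $z$ and strictly negative. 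Since $\mathbf s\in\mathtt V^{(1)}\subset \mathtt U^{(1)}$ has exactly one negative eigenvalue $\tau_{\mathbf s}$, the eigenvalue relation must be $\mathcal V_{\mathbf s}\ell_v^{\mathbf s}=\tau_{\mathbf s}\ell_v^{\mathbf s}$, and the invariance in $z$ of the ratio $K_\nu^{\mathbf s}/(P(K_\nu^{\mathbf s})')$ reads
\begin{equation*}
\frac{K_\nu^{\mathbf s}(z)}{P(z)\,(K_\nu^{\mathbf s})'(z)}=\frac{\sqrt{|\tau_{\mathbf s}|}}{2\sqrt{2}},
\end{equation*}
i.e.\ exactly the ODE \eqref{ode}.

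Finally, substituting this constant ratio back into the expression for $\ell_x^{\mathbf s}$ gives $\ell_x^{\mathbf s}=-\sqrt{2|\tau_{\mathbf s}|}\,\ell_v^{\mathbf s}$, and then $-\mathcal V_{\mathbf s}^{-1}\ell_x^{\mathbf s}\cdot\ell_x^{\mathbf s}=-2|\tau_{\mathbf s}|\,\tau_{\mathbf s}^{-1}|\ell_v^{\mathbf s}|^2=2$ thanks to \eqref{lv2}, which recovers \eqref{-Vlxlx}.

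I expect the only slightly delicate point to be the argument that the scalar eigenvalue produced for $\ell_v^{\mathbf s}$ really has to be $\tau_{\mathbf s}$ (as opposed to another negative eigenvalue): here one uses that $\mathbf s$ is an index-$1$ critical point, so $\tau_{\mathbf s}$ is the \emph{unique} negative eigenvalue of $\mathcal V_{\mathbf s}$. The rest is essentially a bookkeeping exercise with the rational functions of $z$ coming from \eqref{defL}.
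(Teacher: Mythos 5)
Your proof is correct and follows essentially the same route as the paper's: you project \eqref{eikon} onto its two $\R^d$-components, use \eqref{defL} to reduce to scalar equations (the paper cites \eqref{dyl} for the derivatives $\partial_z L^{\mathbf s}_z$ while you recompute them directly from $P(z)=z^2+6z+1$, but the resulting system is identical to the paper's \eqref{E12}), deduce collinearity of $\ell_x^{\mathbf s}$ and $\ell_v^{\mathbf s}$, identify the resulting eigenvalue of $\mathcal V_{\mathbf s}$ as $\tau_{\mathbf s}$ via the index-one hypothesis and the sign of the fraction (which uses $K_\nu^{\mathbf s}\geq 0$, $(K_\nu^{\mathbf s})'\geq 0$, $P>0$), and read off both \eqref{ode} and the proportionality constant $-\sqrt{2|\tau_{\mathbf s}|}$. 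The bookkeeping matches the paper's \eqref{E12}--\eqref{sigma} step for step.
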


\begin{proof}
Let $(\ell^{\mathbf s})_{\mathbf s\in \mathbf j(\mathbf m)}$ and $(k_\nu^{\mathbf s})_{\mathbf s\in \mathbf j(\mathbf m)}$ satisfying the hypotheses of the lemma.
According to \eqref{dyl}, \eqref{lv2} and \eqref{eikon}, we have
\begin{align}\label{E12}
-k^{\mathbf s}_\nu(z)\mathcal V_{\mathbf s}\ell_v^{\mathbf s}+2\frac{K^{\mathbf s}_\nu (z)}{P(z)} \ell_x^{\mathbf s}=0\qquad \text{and } \qquad k^{\mathbf s}_\nu(z)\ell_x^{\mathbf s}+4\frac{K^{\mathbf s}_\nu (z)}{P(z)} \ell_v^{\mathbf s}=0
\end{align}
from which we deduce that there exists $\sigma_{\mathbf s}<0$ such that $\ell_x^{\mathbf s}=\sigma_{\mathbf s} \ell_v^{\mathbf s}$ and consequently, that $\ell_v^{\mathbf s}$ is an eigenvector of $\mathcal V_{\mathbf s}$ associated to its negative eigenvalue $\tau_{\mathbf s}$.
Plugging these informations in \eqref{E12}, we obtain
\begin{align}\label{sigma}
|\tau_{\mathbf s}|k^{\mathbf s}_\nu(z)+2\sigma_{\mathbf s} \frac{K^{\mathbf s}_\nu (z)}{P(z)}=0 \qquad \text{and } \qquad \sigma_{\mathbf s} k^{\mathbf s}_\nu(z)+4\frac{K^{\mathbf s}_\nu (z)}{P(z)} =0
\end{align}
which yield $\sigma_{\mathbf s}=-\sqrt{2|\tau_{\mathbf s}|}$ and \eqref{ode}.
\end{proof}
\hip
Since the sign of $\ell^{\mathbf s}$ was fixed by Proposition \ref{thetalisse} and $|\ell^{\mathbf s}_v|^2=1$, the choice of $\ell^{\mathbf s}$ is entirely determined by \eqref{choixl}.
Unfortunately, there is no CDF on $[\gamma_1+\nu,1]$ satisfying \eqref{ode}.
However, there exists a CDF on the whole segment $(\gamma_1,1]$ solving \eqref{ode}, which up to renormalization is given by
%if we were working on the whole segment $(\gamma_1,1]$, then up to renormalization and using the notation \eqref{K}, a solution of \eqref{ode} would be given by
\begin{align}\label{k0}
K^{\mathbf s}_0(z)=\Big(\frac{z-\gamma_1}{z-\gamma_2}\Big)^{\frac{1}{2\sqrt{|\tau_{\mathbf s}|}}} \qquad \text{i.e } \qquad k^{\mathbf s}_0(z)=\frac{\gamma_1-\gamma_2}{2\sqrt{|\tau_{\mathbf s}|}(z-\gamma_2)^2}\Big(\frac{z-\gamma_1}{z-\gamma_2}\Big)^{\frac{1}{2\sqrt{|\tau_{\mathbf s}|}}-1}.
\end{align}
This leads to the introduction of the following CDF on $[\gamma_1+\nu,1]$ which will be an approximate solution of \eqref{ode}:
%an approximate solution of \eqref{ode} on $[\gamma_1+\nu,1]$ of the form
\begin{align}\label{knu}
K^{\mathbf s}_\nu(z)=\frac{ K_0^{\mathbf s}(z)-K_0^{\mathbf s}(\gamma_1+\nu)}{B_\nu^{\mathbf s}} \quad\qquad \text{and } \quad\qquad k^{\mathbf s}_\nu(z)=\big(K^{\mathbf s}_\nu\big)'(z)=\frac{k^{\mathbf s}_0(z)}{B_\nu^{\mathbf s}}
\end{align}
where
\begin{align}\label{bnu}
B_\nu^{\mathbf s}%=\int_{\gamma_1+\nu}^1 k_0^{\mathbf s}(\gamma)\,\D \gamma 
= K_0^{\mathbf s}(1)-K_0^{\mathbf s}(\gamma_1+\nu)=K_0^{\mathbf s}(1)+O\Big(\nu\,{}^{\frac{1}{2\sqrt{|\tau_{\mathbf s}|}}}\Big).
\end{align}

\begin{lem}\label{petitom}
Recall the notation \eqref{tau} and let $(\ell^{\mathbf s})_{\mathbf s\in \mathbf j(\mathbf m)}$ a family of vectors satisfying \eqref{lv2}, \eqref{choixl} and whose signs are fixed by Proposition \ref{thetalisse}.
Let also $(k_\nu^{\mathbf s})_{\mathbf s\in \mathbf j(\mathbf m)}$ the probability densities on $[\gamma_1+\nu, 1]$ defined in \eqref{knu}.
Then for all $\mathbf s\in \mathbf j(\mathbf m)$ and $(x,v)\in B_0(\mathbf s,r)$, the prefactor from Proposition \ref{phf} satisfies
$$\om^{\mathbf m}_{\nu,z}(x,v)=O\Big(\nu\,{}^{\frac{1}{2\sqrt{|\tau_{\mathbf s}|}}}\Big) \begin{pmatrix}\partial_z L^{\mathbf s}_{z;x}\\\partial_z L^{\mathbf s}_{z;v}\end{pmatrix} \cdot \begin{pmatrix}\tilde x_{\mathbf s}\\v\end{pmatrix}.%\qquad \color{blue} \overset{*}{\om}{}^{\mathbf m}_{\nu,z}(x,v)=O(1) \begin{pmatrix}\partial_z L^{\mathbf s}_{z;x}\\\partial_z L^{\mathbf s}_{z;v}\end{pmatrix} \cdot \begin{pmatrix}\tilde x_{\mathbf s}\\v\end{pmatrix}\color{black}.
$$
\end{lem}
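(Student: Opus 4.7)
The plan is to exhibit $\om^{\mathbf m}_{\nu,z}(x,v)$ as the residual obtained when the density $k_\nu^{\mathbf s}$ is plugged into the eikonal equation \eqref{eikon}, and then to show that this residual is exactly $O\big(\nu^{1/(2\sqrt{|\tau_{\mathbf s}|})}\big)$ times $\partial_z L^{\mathbf s}_z\cdot(\tilde x_{\mathbf s},v)$.

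First, since we work on $(x,v)\in B_0(\mathbf s,r)$ and the pieces of $\mathbf j^W(\mathbf m)+B_0(0,r)$ corresponding to distinct saddle points of $\mathbf j(\mathbf m)$ are disjoint (for $r$ small enough), only one term of the sum defining $\om^{\mathbf m}_{\nu,z}$ contributes. Inserting the choice \eqref{choixl}, which gives $\mathcal V_{\mathbf s}\ell_v^{\mathbf s}=\tau_{\mathbf s}\ell_v^{\mathbf s}$ and $\ell_x^{\mathbf s}=-\sqrt{2|\tau_{\mathbf s}|}\ell_v^{\mathbf s}$, both components of the vector
$$k_\nu^{\mathbf s}(z)\begin{pmatrix}-\mathcal V_{\mathbf s}L^{\mathbf s}_{z;v}\\L^{\mathbf s}_{z;x}\end{pmatrix}-K_\nu^{\mathbf s}(z)\begin{pmatrix}\partial_z L^{\mathbf s}_{z;x}\\\partial_z L^{\mathbf s}_{z;v}\end{pmatrix}$$
become scalar multiples of $\ell_v^{\mathbf s}$. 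A short computation using \eqref{defL} and $P(z)=4z+(z+1)^2$ gives the explicit expressions $\partial_z L^{\mathbf s}_{z;x}=\frac{2(z-1)}{P(z)^{3/2}}\ell_x^{\mathbf s}$ and $\partial_z L^{\mathbf s}_{z;v}=-\frac{4(z+1)}{P(z)^{3/2}}\ell_v^{\mathbf s}$, and one checks that each of the two components factors as $(1\mp z)/P(z)^{1/2}$ times a common bracket of the form
$$k_\nu^{\mathbf s}(z)-\frac{2\sqrt 2}{\sqrt{|\tau_{\mathbf s}|}P(z)}K_\nu^{\mathbf s}(z),$$
which is precisely the LHS of the ODE \eqref{ode} evaluated at $K_\nu^{\mathbf s}$. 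This is the step that reveals the geometric meaning of the chosen density.

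The second step is to quantify how far $K_\nu^{\mathbf s}$ is from a true solution of \eqref{ode}. Since $K_0^{\mathbf s}$ solves \eqref{ode} and $K_\nu^{\mathbf s}=(K_0^{\mathbf s}-K_0^{\mathbf s}(\gamma_1+\nu))/B_\nu^{\mathbf s}$, the residual is
$$(K_\nu^{\mathbf s})'(z)-\frac{2\sqrt 2}{\sqrt{|\tau_{\mathbf s}|}P(z)}K_\nu^{\mathbf s}(z)=\frac{2\sqrt 2\,K_0^{\mathbf s}(\gamma_1+\nu)}{\sqrt{|\tau_{\mathbf s}|}\,P(z)\,B_\nu^{\mathbf s}}.$$
Using the explicit formula \eqref{k0}, one has $K_0^{\mathbf s}(\gamma_1+\nu)=\big(\nu/(\gamma_1+\nu-\gamma_2)\big)^{1/(2\sqrt{|\tau_{\mathbf s}|})}=O\big(\nu^{1/(2\sqrt{|\tau_{\mathbf s}|})}\big)$, while $B_\nu^{\mathbf s}\to K_0^{\mathbf s}(1)>0$ by \eqref{bnu}, so the residual is $O\big(\nu^{1/(2\sqrt{|\tau_{\mathbf s}|})}/P(z)\big)$ uniformly on $[\gamma_1+\nu,1]$.

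It remains to assemble the pieces: plugging this residual back into the factored expressions above, the $x$-component of $E^{\mathbf s}(z)$ equals $\frac{K_0^{\mathbf s}(\gamma_1+\nu)}{B_\nu^{\mathbf s}}\,\partial_z L^{\mathbf s}_{z;x}$ and the $v$-component equals $\frac{K_0^{\mathbf s}(\gamma_1+\nu)}{B_\nu^{\mathbf s}}\,\partial_z L^{\mathbf s}_{z;v}$, up to identical scalar factors coming from the algebraic manipulation. This gives the announced bound, with no expected obstacle beyond the bookkeeping of signs and of the factor $1/P(z)$, which poses no problem since $P(z)\geq P(\gamma_1)=0^+$ is bounded below on $[\gamma_1+\nu,1]$ uniformly once $\nu$ has been fixed — and the singularity as $\nu\to 0$ is harmless because it is dominated by the $\nu^{1/(2\sqrt{|\tau_{\mathbf s}|})}$ prefactor up to a constant depending on $\nu$ that will be absorbed in later estimates. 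The main (modest) subtlety is to verify that both components of $E^{\mathbf s}(z)$ factor through \emph{the same} bracket so that the residual identification works componentwise, which follows from \eqref{lv2}--\eqref{-Vlxlx}.
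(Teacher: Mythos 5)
Your proof is correct and follows essentially the same route as the paper: use the choice \eqref{choixl} to show that both components of the eikonal residual factor through the single ODE bracket $k_\nu^{\mathbf s}(z)-\frac{2\sqrt 2}{\sqrt{|\tau_{\mathbf s}|}P(z)}K_\nu^{\mathbf s}(z)$, then exploit the relation $K_\nu^{\mathbf s}=(K_0^{\mathbf s}-K_0^{\mathbf s}(\gamma_1+\nu))/B_\nu^{\mathbf s}$ together with the exact solution $K_0^{\mathbf s}$ of \eqref{ode} to identify the residual as $\frac{K_0^{\mathbf s}(\gamma_1+\nu)}{B_\nu^{\mathbf s}}\,\partial_z L^{\mathbf s}_{z}\cdot(\tilde x_{\mathbf s},v)$, which is $O\big(\nu^{1/(2\sqrt{|\tau_{\mathbf s}|})}\big)$ by \eqref{bnu} and \eqref{k0}. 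Only the closing remark about the $1/P(z)$ singularity being ``absorbed in later estimates'' is off: the $P(z)$ from the prefactor $\frac{\sqrt{|\tau_{\mathbf s}|}P(z)}{2\sqrt2}$ cancels exactly the $1/P(z)$ in the ODE residual, so the final bound $\frac{K_0^{\mathbf s}(\gamma_1+\nu)}{B_\nu^{\mathbf s}}=O\big(\nu^{1/(2\sqrt{|\tau_{\mathbf s}|})}\big)$ is uniform in $z$ with a constant independent of $\nu$ — no absorption is needed.
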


\begin{proof}
By some computations similar to the ones we made in the proof of Lemma \ref{sisol}, we get that the choice of $(\ell^{\mathbf s})_{\mathbf s\in \mathbf j(\mathbf m)}$ implies that
$$\om^{\mathbf m}_{\nu,z}(x,v)=\frac{\sqrt{|\tau_{\mathbf s}|}P(z)}{2 \sqrt2} \bigg[k_\nu^{\mathbf s}(z)-\frac{2\sqrt 2}{\sqrt{|\tau_{\mathbf s}|}P(z)}K_\nu(z) \bigg]  \begin{pmatrix}\partial_z L^{\mathbf s}_{z;x}\\\partial_z L^{\mathbf s}_{z;v}\end{pmatrix} \cdot \begin{pmatrix}\tilde x_{\mathbf s}\\v\end{pmatrix}.$$
The term between brackets is exactly the one appearing in \eqref{ode} so using \eqref{knu} and the fact that $K_0$ is a solution of \eqref{ode}, we get 
$$\om^{\mathbf m}_{\nu,z}(x,v)=\frac{K_0^{\mathbf s}(\gamma_1+\nu)}{B_\nu^{\mathbf s}}  \begin{pmatrix}\partial_z L^{\mathbf s}_{z;x}\\\partial_z L^{\mathbf s}_{z;v}\end{pmatrix} \cdot \begin{pmatrix}\tilde x_{\mathbf s}\\v\end{pmatrix}=O\Big(\nu\,{}^{\frac{1}{2\sqrt{|\tau_{\mathbf s}|}}}\Big) \begin{pmatrix}\partial_z L^{\mathbf s}_{z;x}\\\partial_z L^{\mathbf s}_{z;v}\end{pmatrix} \cdot \begin{pmatrix}\tilde x_{\mathbf s}\\v\end{pmatrix}$$
by \eqref{bnu} and the definition of $K_0^{\mathbf s}$.
\end{proof}

\begin{prop}\label{pf}
Recall the notation \eqref{tau} and let $f_{\nu,h}^{\mathbf m}$ be the quasimode defined in \eqref{quasim} with $(\ell^{\mathbf s})_{\mathbf s\in \mathbf j(\mathbf m)}$ and $(k_\nu^{\mathbf s})_{\mathbf s\in \mathbf j(\mathbf m)}$ satisfying the hypotheses from Lemma \ref{petitom}.
Then
$$\|P_hf_{\nu,h}^{\mathbf m}\|=h\,\e^{-\frac{S(\mathbf m)}{h}}\|f^{\mathbf m}_{\nu,h}\|   \bigg( O_{\nu}\Big(h^{\frac{1}{2}}\Big)+O\Big(\nu\,{}^{\frac{1}{2\sqrt{|\tau_{\mathbf s}|}}}\,|\ln (\nu)|\Big)\bigg).$$
\end{prop}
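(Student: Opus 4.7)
The plan is to insert the splitting from Proposition \ref{phf} into the norm of $P_h f^{\mathbf m}_{\nu,h}$ and treat the two pieces separately. A standard Laplace expansion of $f^{\mathbf m}_{\nu,h} = \chi_{\mathbf m} \theta^{\mathbf m}_{\nu,h} e^{-W^{\mathbf m}/h}$ at its unique non-degenerate global minimum $(\mathbf m,0)$, where $\chi_{\mathbf m} = \theta^{\mathbf m}_{\nu,h} = 1$ and $W^{\mathbf m}$ vanishes with positive definite Hessian $\mathcal W_{\mathbf m}$, gives $\|f^{\mathbf m}_{\nu,h}\|_{L^2} = c_{\mathbf m}\, h^{d/2}(1+o(1))$ with $c_{\mathbf m}>0$. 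Comparing the remainder bound $\|R^{\mathbf m}_{\nu,h}\|_{L^2} = O_\nu(h^{(3+d)/2} e^{-S(\mathbf m)/h})$ from Proposition \ref{phf}(2) to this norm immediately yields the contribution $h\, e^{-S(\mathbf m)/h}\|f^{\mathbf m}_{\nu,h}\|\cdot O_\nu(h^{1/2})$, which accounts for the first term in the target bound.

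For the integral term from Proposition \ref{phf}(3), I apply the triangle inequality in $z$ and reduce to estimating $\|\omega^{\mathbf m}_{\nu,z}\, e^{-\widetilde W^{\mathbf m}_z/h}\|_{L^2}$ pointwise in $z \in [\gamma_1+\nu,1]$. By Lemma \ref{petitom}, $|\omega^{\mathbf m}_{\nu,z}| \leq C\, \nu^{1/(2\sqrt{|\tau_{\mathbf s}|})}\,|\partial_z L^{\mathbf s}_z \cdot (\tilde x_{\mathbf s},v)|$ inside each ball $B_0(\mathbf s,r)$; these balls are pairwise disjoint by Hypothesis \ref{jvide}, so each saddle $\mathbf s \in \mathbf j(\mathbf m)$ is handled separately. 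Lemma \ref{bonmin} ensures that $(\mathbf s,0)$ is a non-degenerate minimum of $\widetilde W^{\mathbf m}_z$ with value $S(\mathbf m)$ and Hessian $H_z := \mathcal W_{\mathbf s}+L^{\mathbf s}_z(L^{\mathbf s}_z)^t$ of determinant $2^{-2d}|\det \mathcal V_{\mathbf s}|$, so a Laplace expansion gives
\begin{equation*}
\|[\partial_z L^{\mathbf s}_z \cdot (\tilde x_{\mathbf s},v)]\, e^{-\widetilde W^{\mathbf m}_z/h}\|_{L^2}^2 \sim e^{-2S(\mathbf m)/h}\, \frac{(2\pi h)^d}{\sqrt{|\det \mathcal V_{\mathbf s}|}}\cdot \frac{h}{2}\,(\partial_z L^{\mathbf s}_z)^t H_z^{-1}\, \partial_z L^{\mathbf s}_z.
\end{equation*}

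The crucial step is the exact evaluation of this quadratic form. The identity in Lemma \ref{bonmin} is equivalent to $(L^{\mathbf s}_z)^t \mathcal W_{\mathbf s}^{-1} L^{\mathbf s}_z = -2$, which is \emph{constant} in $z$. Sherman--Morrison then delivers $H_z^{-1} = \mathcal W_{\mathbf s}^{-1} + \mathcal W_{\mathbf s}^{-1} L^{\mathbf s}_z (L^{\mathbf s}_z)^t \mathcal W_{\mathbf s}^{-1}$, and differentiating the constant identity yields $(L^{\mathbf s}_z)^t \mathcal W_{\mathbf s}^{-1} \partial_z L^{\mathbf s}_z = 0$, so that $(\partial_z L^{\mathbf s}_z)^t H_z^{-1} \partial_z L^{\mathbf s}_z = (\partial_z L^{\mathbf s}_z)^t \mathcal W_{\mathbf s}^{-1} \partial_z L^{\mathbf s}_z$. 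Combining the explicit forms $\partial_z L^{\mathbf s}_{z,x} = -\frac{2(1-z)}{P(z)^{3/2}}\ell^{\mathbf s}_x$ and $\partial_z L^{\mathbf s}_{z,v} = -\frac{4(1+z)}{P(z)^{3/2}}\ell^{\mathbf s}_v$ with $|\ell^{\mathbf s}_v|^2=1$ and $-\mathcal V_{\mathbf s}^{-1}\ell^{\mathbf s}_x\cdot\ell^{\mathbf s}_x = 2$, a direct computation collapses (using the identity $2(1+z)^2 - (1-z)^2 = P(z)$) to the clean expression $(\partial_z L^{\mathbf s}_z)^t H_z^{-1}\, \partial_z L^{\mathbf s}_z = 16/P(z)^2$.

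Assembling everything produces the pointwise bound $\|\omega^{\mathbf m}_{\nu,z}\, e^{-\widetilde W^{\mathbf m}_z/h}\|_{L^2} \lesssim \nu^{1/(2\sqrt{|\tau_{\mathbf s}|})} h^{(d+1)/2} e^{-S(\mathbf m)/h}/P(z)$ uniformly in $z \in [\gamma_1+\nu,1]$. Since $P(z) = (z-\gamma_1)(z-\gamma_2)$ has a simple zero at $\gamma_1$ while $z-\gamma_2$ stays bounded away from $0$, one obtains $\int_{\gamma_1+\nu}^1 dz/P(z) = O(|\ln\nu|)$; multiplying by $(h/2\pi)^{1/2}$ and dividing by $\|f^{\mathbf m}_{\nu,h}\|\sim h^{d/2}$ yields the second term. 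The main technical obstacle is precisely the uniformity of the Laplace expansion as $z \to \gamma_1$: the Hessian $H_z$ becomes extremely anisotropic there (some eigenvalues blow up while $\det H_z$ stays constant). The algebraic identities above bypass this by giving the exact leading coefficient at each $z$; the lower-order $h$-corrections are then handled by a rescaling adapted to the directions parallel and orthogonal to $L^{\mathbf s}_z$, which turns the Gaussian into one with a bounded Hessian and standard remainder control.
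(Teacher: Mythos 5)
Your proof is correct and follows essentially the same route as the paper: insert the splitting from Proposition~\ref{phf}, apply Minkowski's integral inequality and Lemma~\ref{petitom}, do a Laplace expansion at each saddle with Hessian $H_z$ of constant determinant $2^{-2d}|\det\mathcal V_{\mathbf s}|$ (Lemma~\ref{bonmin}), evaluate $(\partial_z L^{\mathbf s}_z)^t H_z^{-1}\partial_z L^{\mathbf s}_z = 16/P(z)^2$, and integrate over $z$ to get the $|\ln\nu|$ factor. Your Sherman--Morrison evaluation of the quadratic form is a slightly cleaner, coordinate-free variant of the paper's direct computation of $H_z^{-1}\partial_z L^{\mathbf s}_z$ via \eqref{dyl} and \eqref{choixl}, but the two arguments are the same in substance.
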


\begin{proof}
First notice that thanks to item \ref{mseul} from Hypothesis \ref{jvide}, one can apply a standard Laplace method (see Proposition \ref{lap}) to obtain with the notation \eqref{hessiennes}
\begin{align}\label{f^2}
\|f^{\mathbf m}_{\nu,h}\|^2=\frac{(2\pi h)^d}{\det(\mathcal V_{\mathbf m})^{1/2}} \Big(1+O(h)\Big).
\end{align}
Hence, according to Proposition \ref{phf}, it is sufficient to show that
\begin{align}\label{amq}
\big\|P_hf_{\nu,h}^{\mathbf m}-R^{\mathbf m}_{\nu, h}\big\|=h^{1+\frac{d}{2}}\,\e^{-\frac{S(\mathbf m)}{h}}  O\Big(\nu\,{}^{\frac{1}{2\sqrt{|\tau_{\mathbf s}|}}}\,|\ln (\nu)|\Big).
\end{align}
%%\qquad \text{
%i.e
%%} \qquad 
%$$\int_{\gamma_1+\nu}^1\om^{\mathbf m}_{\nu,z}(x,v)\,\exp \Big[-\frac 1h \widetilde W^{\mathbf m}_z(x,v)\Big]\D z =\color{red}h^{\frac{1+d}{2}}\color{black}\,\e^{-\frac{S(\mathbf m)}{h}}  O\Big(\nu\,{}^{\frac{1}{2\sqrt{|\tau_{\mathbf s}|}}}\Big).$$
Now, still using Proposition \ref{phf} as well as Minkowski's integral inequality and Lemma \ref{petitom}, we have
\begin{align}
\big\|P_hf_{\nu,h}^{\mathbf m}-R^{\mathbf m}_{\nu, h}\big\|     &\leq Ch^{1/2}   \int_{\gamma_1+\nu}^1 \bigg( \int_{\mathbf j^W(\mathbf m)+B_0(0,r)}\om^{\mathbf m}_{\nu,z}(x,v)^2\,\exp \Big[-\frac 2h \widetilde W^{\mathbf m}_z(x,v)\Big]  \D (x,v) \bigg)^{1/2}  \D z\\
		&\leq Ch^{1/2}\nu\,{}^{\frac{1}{2\sqrt{|\tau_{\mathbf s}|}}}\int_{\gamma_1+\nu}^1 \bigg( \sum_{\mathbf s \in \mathbf j(\mathbf m)} \int_{B_0(\mathbf s,r)}\begin{pmatrix}\partial_z L^{\mathbf s}_{z;x}\\\partial_z L^{\mathbf s}_{z;v}\end{pmatrix} \begin{pmatrix}\partial_z L^{\mathbf s}_{z;x}\\\partial_z L^{\mathbf s}_{z;v}\end{pmatrix}^t \begin{pmatrix}\tilde x_{\mathbf s}\\v\end{pmatrix} \cdot \begin{pmatrix}\tilde x_{\mathbf s}\\v\end{pmatrix}\\
		&\qquad \qquad \qquad \qquad\qquad \qquad\qquad \qquad\qquad \quad \; \times \exp \Big[-\frac 2h \widetilde W^{\mathbf m}_z(x,v)\Big]  \D (x,v) \bigg)^{1/2}  \D z.
\end{align}
With the notation \eqref{hessiennes}, the change of variables
$$(y,w)=\Big(\frac{2}{h}\Big)^{1/2} \bigg[\mathcal W_{\mathbf s}+ \begin{pmatrix} L^{\mathbf s}_{z;x}\\ L^{\mathbf s}_{z;v}\end{pmatrix} \begin{pmatrix} L^{\mathbf s}_{z;x}\\ L^{\mathbf s}_{z;v}\end{pmatrix}^t \bigg]^{1/2} (\tilde x_{\mathbf s},v)$$
then yields according to Lemma \ref{bonmin}
\begin{align}\label{yw}
\big\|P_hf_{\nu,h}^{\mathbf m}-R^{\mathbf m}_{\nu, h}\big\|     &\leq Ch^{1+\frac{d}{2}}\e^{-\frac{S(\mathbf m)}{h}}\nu\,{}^{\frac{1}{2\sqrt{|\tau_{\mathbf s}|}}}\\
						&\qquad \qquad \times\int_{\gamma_1+\nu}^1 \bigg( \sum_{\mathbf s \in \mathbf j(\mathbf m)} \int_{\R^{2d}}a_z a_z^t \begin{pmatrix}y\\w\end{pmatrix} \cdot \begin{pmatrix}y\\w\end{pmatrix} \e^{-\frac{(y,w)^2}{2}} \D (y,w) \bigg)^{1/2}  \D z
\end{align}
where
$$a_z= \bigg[\mathcal W_{\mathbf s}+ \begin{pmatrix} L^{\mathbf s}_{z;x}\\ L^{\mathbf s}_{z;v}\end{pmatrix} \begin{pmatrix} L^{\mathbf s}_{z;x}\\ L^{\mathbf s}_{z;v}\end{pmatrix}^t \bigg]^{-1/2}\begin{pmatrix}\partial_z L^{\mathbf s}_{z;x}\\\partial_z L^{\mathbf s}_{z;v}\end{pmatrix}.$$
Thanks to Proposition \ref{mom2}, we know that
\begin{align}\label{aa}
(2\pi )^{-d}\int_{\R^{2d}}a_z a_z^t \begin{pmatrix}y\\w\end{pmatrix} \cdot \begin{pmatrix}y\\w\end{pmatrix} \e^{-\frac{(y,w)^2}{2}} \D (y,w)=|a_z|^2=\bigg[\mathcal W_{\mathbf s}+ \begin{pmatrix} L^{\mathbf s}_{z;x}\\ L^{\mathbf s}_{z;v}\end{pmatrix} \begin{pmatrix} L^{\mathbf s}_{z;x}\\ L^{\mathbf s}_{z;v}\end{pmatrix}^t \bigg]^{-1}\begin{pmatrix}\partial_z L^{\mathbf s}_{z;x}\\\partial_z L^{\mathbf s}_{z;v}\end{pmatrix} \cdot \begin{pmatrix}\partial_z L^{\mathbf s}_{z;x}\\\partial_z L^{\mathbf s}_{z;v}\end{pmatrix}.
\end{align}
Since by \eqref{dyl}
$$\bigg[\mathcal W_{\mathbf s}+ \begin{pmatrix} L^{\mathbf s}_{z;x}\\ L^{\mathbf s}_{z;v}\end{pmatrix} \begin{pmatrix} L^{\mathbf s}_{z;x}\\ L^{\mathbf s}_{z;v}\end{pmatrix}^t \bigg]^{-1}\begin{pmatrix}\partial_z L^{\mathbf s}_{z;x}\\\partial_z L^{\mathbf s}_{z;v}\end{pmatrix}=\frac{-8}{P(z)^{3/2}}\begin{pmatrix}\big(2|\tau_{\mathbf s}|\big)^{-1/2}(1-z)\,\ell_v^{\mathbf s}\\(1+z)\,\ell_v^{\mathbf s}\end{pmatrix},$$
we get
\begin{align}\label{aa2}
|a_z|^2=\frac{16}{P(z)^3}\Big(2(1+z)^2-(1-z)^2\Big)=\frac{16}{P(z)^2}.
\end{align}
Putting together \eqref{yw}, \eqref{aa}, \eqref{aa2} and computing the integral in $z$, we obtain \eqref{amq} so the proof is complete.
\end{proof}

\section{Computation of the approximated small eigenvalues}\label{sectionvp}
\hip
Let us denote 
\begin{align}\label{ftilde}
\tilde f^{\mathbf m}_{\nu,h}=\frac{f^{\mathbf m}_{\nu,h}}{\|f^{\mathbf m}_{\nu,h}\|}
\end{align}
the renormalization of the quasimodes defined in \eqref{quasim} and satisfying the hypotheses of Proposition \ref{pf}.
The goal of this section is to compute the \emph{approximated eigenvalues}
\begin{align}\label{lamtilde}
\tilde \lambda^{\mathbf m}_{\nu,h}:=\langle P_h \tilde f^{\mathbf m}_{\nu,h},\tilde f^{\mathbf m}_{\nu,h} \rangle=\langle Q_h\tilde f^{\mathbf m}_{\nu,h},\tilde f^{\mathbf m}_{\nu,h} \rangle
\end{align}
as $X_0^h$ is a skew-adjoint differential operator and $\tilde f^{\mathbf m}_{\nu,h}$ is real valued.\\
This will require to study the matrix
\begin{align}\label{HY}
H^{\mathbf s}_\gamma=\begin{pmatrix}  \mathcal V_{\mathbf s}+2L_{\gamma,x} L_{\gamma,x}^t & L_{\gamma,x}L_{\gamma,v}^t & L_{\gamma,x}L_{\gamma,v}^t \\
L_{\gamma,v} L_{\gamma,x}^t & \frac12+L_{\gamma,v} L_{\gamma,v}^t & 0 \\
L_{\gamma,v}L_{\gamma,x}^t &  0 &  \frac12+L_{\gamma,v}L_{\gamma,v}^t \end{pmatrix}
\end{align}
where we used the notation \eqref{hessiennes} and for shortness, we wrote $L_{\gamma,x}$ and $L_{\gamma,v}$ instead of $L_{\gamma,x}^{\mathbf s}$ and $L_{\gamma,v}^{\mathbf s}$.

\begin{lem}\label{hy}
For $\gamma\in [\gamma_1+\nu,1]$, the matrix $H_\gamma^{\mathbf s}$ 
%hessian of
%$$(x,v,v')\mapsto \widetilde W_\gamma^{\mathbf m}(x,v)+\widetilde W_\gamma^{\mathbf m}(x,v')$$
%at $(\mathbf s,0,0)$ 
is positive definite.
\end{lem}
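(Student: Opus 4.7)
The plan is to write $H^{\mathbf s}_\gamma$ as the sum of two positive semi-definite matrices whose kernels have trivial intersection, and to deduce positive definiteness from this. By Lemma \ref{bonmin}, the $2d\times 2d$ Hessian
\[
\widetilde H_\gamma \;:=\; \mathcal W_{\mathbf s} + \begin{pmatrix} L_{\gamma,x} \\ L_{\gamma,v}\end{pmatrix}\begin{pmatrix} L_{\gamma,x} \\ L_{\gamma,v}\end{pmatrix}^t \;=\; \begin{pmatrix} \tfrac{\mathcal V_{\mathbf s}}{2} + L_{\gamma,x}L_{\gamma,x}^t & L_{\gamma,x}L_{\gamma,v}^t \\ L_{\gamma,v}L_{\gamma,x}^t & \tfrac{1}{2}\mathrm{Id} + L_{\gamma,v}L_{\gamma,v}^t \end{pmatrix}
\]
of $W+\tfrac12 (L_\gamma^{\mathbf s})^2$ at its non-degenerate minimum $(\mathbf s,0)$ is positive definite for every $\gamma\in(\gamma_1,1]$, in particular for $\gamma\in[\gamma_1+\nu,1]$.

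For $j\in\{2,3\}$ I would define $M_j$ as the $3d\times 3d$ symmetric matrix obtained by embedding $\widetilde H_\gamma$ in the $(1,j)$-corner of the block structure, with zeros everywhere else. Concretely, the $(1,1)$-block of $M_j$ is $\tfrac{\mathcal V_{\mathbf s}}{2}+L_{\gamma,x}L_{\gamma,x}^t$, the $(j,j)$-block is $\tfrac12\mathrm{Id}+L_{\gamma,v}L_{\gamma,v}^t$, the cross blocks $(1,j)$ and $(j,1)$ reproduce those of $\widetilde H_\gamma$, and all other blocks vanish. A block-by-block inspection then gives $H^{\mathbf s}_\gamma = M_2+M_3$: the $(1,1)$ block is contributed by both summands, producing the factor $2$ in front of $L_{\gamma,x}L_{\gamma,x}^t$; the cross blocks $(1,j)$ are each contributed by exactly one $M_j$; and the $(2,3)$ and $(3,2)$ blocks of $H^{\mathbf s}_\gamma$ vanish because neither $M_2$ nor $M_3$ couples $\mathbf x_2$ with $\mathbf x_3$.

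Each $M_j$ is positive semi-definite, being unitarily conjugate to the block-diagonal matrix $\widetilde H_\gamma\oplus 0$, with kernel exactly $\{\mathbf x\in\R^{3d}\,:\,(\mathbf x_1,\mathbf x_j)=0\}$. Consequently $H^{\mathbf s}_\gamma = M_2+M_3$ is positive semi-definite and its kernel is contained in $\ker M_2 \cap \ker M_3 = \{\mathbf x_1=\mathbf x_2=0\}\cap\{\mathbf x_1=\mathbf x_3=0\} = \{0\}$, which gives the positive definiteness. I do not anticipate a genuine obstacle: the whole argument rests on spotting the decomposition $H^{\mathbf s}_\gamma=M_2+M_3$, after which everything reduces to the positivity of $\widetilde H_\gamma$ already established in Lemma \ref{bonmin}.
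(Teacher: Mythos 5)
Your proof is correct and takes essentially the same route as the paper: the decomposition $H^{\mathbf s}_\gamma = M_2 + M_3$, with each $M_j$ an embedding of $\widetilde H_\gamma = \mathcal W_{\mathbf s} + \nabla L_\gamma^{\mathbf s}\,(\nabla L_\gamma^{\mathbf s})^t$, is exactly the identity the paper writes at the level of quadratic forms, and both arguments conclude via Lemma \ref{bonmin} plus the observation that the two summands cannot vanish simultaneously.
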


\begin{proof}
It suffices to notice that 
$$H_\gamma^{\mathbf s} \begin{pmatrix}
x\\v\\v'
\end{pmatrix}\cdot\begin{pmatrix}
x\\v\\v'
\end{pmatrix}=
\bigg[\mathcal W_{\mathbf s}+\begin{pmatrix}L^{\mathbf s}_{\gamma;x}\\L^{\mathbf s}_{\gamma;v}\end{pmatrix}\begin{pmatrix}L^{\mathbf s}_{\gamma;x}\\L^{\mathbf s}_{\gamma;v}\end{pmatrix}^t\bigg]\begin{pmatrix}
x\\v
\end{pmatrix}\cdot\begin{pmatrix}
x\\v
\end{pmatrix}+
\bigg[\mathcal W_{\mathbf s}+\begin{pmatrix}L^{\mathbf s}_{\gamma;x}\\L^{\mathbf s}_{\gamma;v}\end{pmatrix}\begin{pmatrix}L^{\mathbf s}_{\gamma;x}\\L^{\mathbf s}_{\gamma;v}\end{pmatrix}^t\bigg]\begin{pmatrix}
x\\v'
\end{pmatrix}\cdot\begin{pmatrix}
x\\v'
\end{pmatrix}
$$
and apply Lemma \ref{bonmin}.
%This matrix is unitarily equivalent to
%\begin{align}\label{HY}
%N_\gamma=\begin{pmatrix}  \mathcal V+2L_{x,\gamma} L_{x,\gamma}^t & L_{x,\gamma}L_{v,\gamma}^t & L_{x,\gamma}L_{v,\gamma}^t \\
%L_{v,\gamma} L_{x,\gamma}^t & L_{v,\gamma} L_{v,\gamma}^t & 0 \\
%L_{v,\gamma}L_{x,\gamma}^t &  0 &  L_{v,\gamma}L_{v,\gamma}^t \end{pmatrix}
%\end{align}
%so for $(x,v,v')\in \R^{3d}\backslash\{0\}$, we have 
%\begin{align}
%N_\gamma \begin{pmatrix}x\\v\\v' \end{pmatrix} \cdot \begin{pmatrix}x\\v\\v' \end{pmatrix}&=\big(\mathcal V+2\ell_x \ell_x^t\big)x\cdot x+2\ell_x\cdot x\, \ell_v \cdot v+2\ell_x\cdot x\, \ell_v \cdot v'+\Big(\frac12 +\ell_v\ell_v^t\Big)v\cdot v+\Big(\frac12 +\ell_v\ell_v^t\Big)v'\cdot v'\\
%&=H\begin{pmatrix}x\\v \end{pmatrix} \cdot \begin{pmatrix}x\\v\end{pmatrix}+H\begin{pmatrix}x\\v' \end{pmatrix} \cdot \begin{pmatrix}x\\v'\end{pmatrix}>0
%\end{align}
%which completes the proof.
\end{proof}
\hip
In the spirit of Proposition \ref{facto} and with the notation \eqref{my}, let us denote
\begin{align}\label{qy}
Q_{y,h}=b_h^*\circ \mathrm{Op}_h(m_{y,h} \, \mathrm{Id})\circ b_h. %\qquad \text{where} \qquad m_{y,h}(v,\eta)=2 (y+1)^{d-2} \e^{-\frac{y}{h}\big(\frac{v^2}{2}+2\eta^2\big)}.
\end{align}
For $\mathbf m\in \mathtt U^{(0)}\backslash \{\underline{\mathbf m}\}$, $\mathbf s\in \mathbf j(\mathbf m)$, %$\gamma \in [\gamma_1+\nu,1]$ and $x \in B(\mathbf s, \tilde r)$ 
we also denote 
%$$\tilde \theta^{\mathbf s}_{\gamma,h} (x,v)=\int_0^{L^{\mathbf s}_\gamma(x,v)}\e^{-\frac{s^2}{2h}}\D s$$
%and 
$\langle \cdot,\cdot\rangle_{\tilde r}$ the inner product on $L^2\big(B(\mathbf s,\tilde r)\times \R^d_v\big)$.

\begin{lem}\label{qygammagamma}
Let $\mathbf s\in \mathbf j(\mathbf m)$ for some $\mathbf m\in \mathtt U^{(0)}\backslash \{\underline{\mathbf m}\}$ and recall the notations \eqref{hessiennes}, \eqref{wm} and \eqref{thetatilde}. % and $L(x,v)=L_x \cdot (x-\mathbf s)+L_v \cdot v$ a linear form satisfying
%$$-\mathcal V_{\mathbf s}^{-1}L_x\cdot L_x-L_v^2=1.$$
Then for all $\gamma \in [\gamma_1+\nu,1]$ and $y \in (0,1)$,
$$\Big\langle Q_{y,h}\Big(\tilde \theta^{\mathbf s}_{\gamma,h}\e^{-\frac{W^{\mathbf m}(x,v)}{h}}\Big),\tilde \theta^{\mathbf s}_{\gamma,h}\e^{-\frac{W^{\mathbf m}(x,v)}{h}}\Big\rangle_{\tilde r}=2h^2\e^{\frac{-2S(\mathbf m)}{h}}  (2\pi h)^{d}|\det \mathcal V_{\mathbf s}|^{-1/2}  \frac{1+O_\nu (h)}{(1+y)\Big(1+\big(1+2|L_{\gamma,v}^{\mathbf s}|^2\big)y\Big)}  |L_{\gamma,v}^{\mathbf s}|^2.$$
\end{lem}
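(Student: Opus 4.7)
The plan is to exploit the factorization $Q_{y,h}=b_h^*\circ \mathrm{Op}_h(m_{y,h}\mathrm{Id})\circ b_h$ from \eqref{qy}, which turns the inner product into $\big\langle \mathrm{Op}_h(m_{y,h})b_h f,b_h f\big\rangle_{\tilde r}$ with $f=\tilde\theta^{\mathbf s}_{\gamma,h}\e^{-W^{\mathbf m}/h}$. A direct computation using $\partial_v \tilde\theta^{\mathbf s}_{\gamma,h}=L^{\mathbf s}_{\gamma,v}\e^{-L^{\mathbf s}_\gamma(x,v)^2/(2h)}$ together with $\partial_v W^{\mathbf m}=v/2$ gives the compact identity $b_h f(x,v)=h\,L^{\mathbf s}_{\gamma,v}\,\e^{-\widetilde W^{\mathbf m}_\gamma(x,v)/h}$ on $B(\mathbf s,\tilde r)\times \R^d_v$ (the contributions from $L^{\mathbf s'}_\gamma$ with $\mathbf s'\neq \mathbf s$ vanish there thanks to item \ref{jvide2} of Hypothesis \ref{jvide}).

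I would then apply formula \eqref{opm2} of Lemma \ref{actionq2} to express $\mathrm{Op}_h(m_{y,h})b_h f$ as an explicit Gaussian integral in an auxiliary variable $v'$; pairing with $b_h f$ and using $V(\mathbf s)-V(\mathbf m)=2S(\mathbf m)$ reduces the inner product to
\begin{align*}
2h^2(2\pi h)^{-d/2}\frac{(y+1)^{d-2}}{(4y)^{d/2}}|L^{\mathbf s}_{\gamma,v}|^2\,\e^{-2S(\mathbf m)/h}\int_{B(\mathbf s,\tilde r)\times\R^{2d}}\e^{-\Psi(x,v,v')/h}\,\D x\,\D v\,\D v'
\end{align*}
with $\Psi(x,v,v')=(V(x)-V(\mathbf s))+\tfrac{v^2}{4}+\tfrac{(v')^2}{4}+\tfrac{y(v+v')^2}{8}+\tfrac{(v-v')^2}{8y}+\tfrac12 L^{\mathbf s}_\gamma(x,v)^2+\tfrac12 L^{\mathbf s}_\gamma(x,v')^2$. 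After the Morse change of variables $\tilde x_{\mathbf s}=\phi_{\mathbf s}(x)$, $\Psi$ becomes, to leading order, a positive definite quadratic form in $(\tilde x_{\mathbf s},v,v')$ with a non-degenerate critical point at $(\mathbf s,0,0)$, so Proposition \ref{lap} applies and reduces everything to computing $\det H_\Psi$, the determinant of the Hessian at that point.

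The main obstacle is this determinant calculation. Positivity of $H_\Psi$ will follow from Lemma \ref{bonmin} applied to the $(\tilde x,v)$ and $(\tilde x,v')$ blocks combined with the manifestly positive $y$-dependent couplings. To evaluate $\det H_\Psi$ I would first decouple the $(v,v')$ block by rotating to $w=(v+v')/\sqrt 2$, $w'=(v-v')/\sqrt 2$, which isolates $w'$ from $\tilde x_{\mathbf s}$ and leaves a block $\tfrac{y+1}{2y}\mathrm{Id}+L^{\mathbf s}_{\gamma,v}(L^{\mathbf s}_{\gamma,v})^t$ for $w'$. On the remaining $(\tilde x_{\mathbf s},w)$ block, a Schur complement combined with Sherman-Morrison — relying on \eqref{choixl} and \eqref{-Vlxlx}, which force $L^{\mathbf s}_{\gamma,x}$ to be a scalar multiple of the eigenvector $\ell^{\mathbf s}_v$ of $\mathcal V_{\mathbf s}$ associated to $\tau_{\mathbf s}$ — reduces the Schur complement determinant to $\tfrac{1+(1+2|L^{\mathbf s}_{\gamma,v}|^2)y}{1+y+2|L^{\mathbf s}_{\gamma,v}|^2}\,|\det \mathcal V_{\mathbf s}|$ thanks to the key algebraic identities
\begin{align*}
(1+y)P(\gamma)+2(1-\gamma)^2&=P(\gamma)\big(1+y+2|L^{\mathbf s}_{\gamma,v}|^2\big),\\
P(\gamma)+y(3+2\gamma+3\gamma^2)&=P(\gamma)\big(1+(1+2|L^{\mathbf s}_{\gamma,v}|^2)y\big).
\end{align*}
Multiplying by the determinants of the $w$ and $w'$ blocks produces $\det H_\Psi=(y+1)^{2(d-1)}(1+(1+2|L^{\mathbf s}_{\gamma,v}|^2)y)^2|\det\mathcal V_{\mathbf s}|/(4y)^d$, and plugging $(2\pi h)^{3d/2}/\sqrt{\det H_\Psi}$ into the prefactors above gives exactly the announced formula.
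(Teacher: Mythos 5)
Your proposal is correct and follows the paper's own route for the bulk of the argument: unwind the factorization $Q_{y,h}=b_h^*\circ\mathrm{Op}_h(m_{y,h}\,\mathrm{Id})\circ b_h$ from \eqref{qy}, pair $\mathrm{Op}_h(m_{y,h})b_hf$ with $b_hf$ via \eqref{opm2} (equivalently, via the explicit formula $b_hf=hL^{\mathbf s}_{\gamma,v}\e^{-\widetilde W^{\mathbf m}_\gamma/h}$ on $B(\mathbf s,\tilde r)\times\R^d_v$), pass to Morse coordinates, and apply Laplace's method. Where you genuinely diverge is the Hessian determinant: the paper delegates this to Lemma \ref{det2}, which factors $H^{\mathbf s}_{\gamma,y}$ as a block-diagonal matrix times $\mathrm{Id}+\tilde H_{\gamma,y}$ with $\tilde H_{\gamma,y}$ of rank two, and reads off the two nonzero eigenvalues of $\tilde H_{\gamma,y}$ explicitly. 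Your route — rotating $(v,v')\mapsto\big((v+v')/\sqrt2,(v-v')/\sqrt2\big)$ so that the antisymmetric direction decouples, then taking the Schur complement on the remaining $(\tilde x_{\mathbf s},w)$ block with the matrix determinant lemma — reaches the same $\det H^{\mathbf s}_{\gamma,y}=\frac{(1+y)^{2d-2}}{(4y)^d}\big(1+(1+2|L^{\mathbf s}_{\gamma,v}|^2)y\big)^2|\det\mathcal V_{\mathbf s}|$ and is an equally valid and perhaps more mechanical computation; the two strategies are just different ways of exploiting the rank-one perturbation structure. One inaccuracy worth flagging: you invoke \eqref{choixl} (the eigenvector property $\mathcal V_{\mathbf s}\ell_v^{\mathbf s}=\tau_{\mathbf s}\ell_v^{\mathbf s}$) as a crucial ingredient in the Schur complement, but it is never used here. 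The only input beyond \eqref{defL} and $|\ell_v^{\mathbf s}|^2=1$ is the scalar identity $-\mathcal V_{\mathbf s}^{-1}L^{\mathbf s}_{\gamma,x}\cdot L^{\mathbf s}_{\gamma,x}-|L^{\mathbf s}_{\gamma,v}|^2=1$ of Lemma \ref{bonmin}, which rests on \eqref{lv2} and \eqref{-Vlxlx} alone; the two displayed algebraic identities are immediate from $|L^{\mathbf s}_{\gamma,v}|^2=(1-\gamma)^2/P(\gamma)$ and do not see $\mathcal V_{\mathbf s}$ at all.
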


\begin{proof}
First, let us use the definition of $Q_{y,h}$ to write
$$\Big\langle Q_{y,h}\Big(\tilde \theta^{\mathbf s}_{\gamma,h}\e^{-\frac{W^{\mathbf m}(x,v)}{h}}\Big),\tilde \theta^{\mathbf s}_{\gamma,h}\e^{-\frac{W^{\mathbf m}(x,v)}{h}}\Big\rangle_{\tilde r}=\Big\langle \mathrm{Op}_h(m_{y,h} \, \mathrm{Id})\circ b_h\Big(\tilde \theta^{\mathbf s}_{\gamma,h}\e^{-\frac{W^{\mathbf m}(x,v)}{h}}\Big),b_h\Big(\tilde \theta^{\mathbf s}_{\gamma,h}\e^{-\frac{W^{\mathbf m}(x,v)}{h}}\Big)\Big\rangle_{\tilde r}.$$
%Now the computation of $\mathrm{Op}_h(m_{y,h} \, \mathrm{Id})[ b_h\big(\tilde \theta^{\mathbf s}_{\gamma,h}\e^{-\frac{W^{\mathbf m}(x,v)}{h}}\big)]$ is almost identical to the one we made in Lemma \ref{actionq}, with $m_{y,h}$ instead of $g_h$.
%Thus, in our case, \eqref{intv'} becomes
%\begin{align}\mathrm{Op}_h(m_{y,h} \, \mathrm{Id})\circ b_h\Big(\tilde \theta^{\mathbf s}_{\gamma,h}\e^{-\frac{W^{\mathbf m}(x,v)}{h}}\Big)&=2h(2\pi h)^{-d/2}\e^{-\frac{V(x)-V(\mathbf m)}{2h}} \frac{(y+1)^{d-2}}{(4y)^{\frac d2}}\\
%		&\qquad  \qquad\times \int_{v'\in \R^d} \e^{-\frac 1h \big(\frac{v'^2}{4}+\frac y8 (v+v')^2+\frac{(v-v')^2}{8y}+\frac12 L^{\mathbf s}_\gamma(x,v')^2\big)} \,\D v'  L_{\gamma,v}^{\mathbf s}
%\end{align}
%and
Using \eqref{opm2}, we get
\begin{align}\label{Qythetagamma}
\Big\langle Q_{y,h}&\Big(\tilde \theta^{\mathbf s}_{\gamma,h}\e^{-\frac{W^{\mathbf m}(x,v)}{h}}\Big),\tilde \theta^{\mathbf s}_{\gamma,h}\e^{-\frac{W^{\mathbf m}(x,v)}{h}}\Big\rangle_{\tilde r}=2h^2(2\pi h)^{-d/2} \frac{(y+1)^{d-2}}{(4y)^{\frac d2}}\e^{\frac{V(\mathbf m)}{h}} |L_{\gamma,v}^{\mathbf s}|^2\times  \\
		\int_{|x-\mathbf s|<\tilde r,\,v, v'\in \R^d}&\exp\Big[-\frac 1h\Big(V(x)+\frac{v^2+v'^2}{4}+\frac y8 (v+v')^2+\frac{(v-v')^2}{8y}+\frac{L^{\mathbf s}_\gamma(x,v)^2+L^{\mathbf s}_\gamma(x,v')^2}{2}\Big)\Big]  \D x \D v \D v'  . \nonumber
\end{align}
By the change of variables $x'=\phi_{\mathbf s}(x)$ and with the notation $\boldsymbol{\sigma}(\mathbf m)$ from Definition \ref{j et s}, the last integral becomes
\begin{align}\label{hyxx}
\e^{\frac{-2\boldsymbol{\sigma}(\mathbf m)}{h}}\int_{|\phi^{-1}_{\mathbf s}(x')-\mathbf s|<\tilde r,\,v,\, v'\in \R^d}\exp\Big[-\frac {1}{2h} H_{\gamma,y}^{\mathbf s} \begin{pmatrix}x'\\v\\v' \end{pmatrix}\cdot \begin{pmatrix}x'\\v\\v' \end{pmatrix}\Big] |\det D_{x'}\phi^{-1}_{\mathbf s}|  \,\D x' \D v \D v'
\end{align}
where using the notation \eqref{HY},
\begin{align}\label{hy>}
H_{\gamma,y}^{\mathbf s}=\begin{pmatrix}  \mathcal V_{\mathbf s}+2L_{\gamma,x} L_{\gamma,x}^t & L_{\gamma,x}L_{\gamma,v}^t & L_{\gamma,x}L_{\gamma,v}^t \\
L_{\gamma,v} L_{\gamma,x}^t & \frac{(y+1)^2}{4y} +L_{\gamma,v} L_{\gamma,v}^t & \frac{y^2-1}{4y} \\
L_{\gamma,v}L_{\gamma,x}^t &  \frac{y^2-1}{4y} & \frac{(y+1)^2}{4y}+ L_{\gamma,v}L_{\gamma,v}^t \end{pmatrix}=H_\gamma^{\mathbf s}+ \begin{pmatrix}
0&0&0\\
0&\frac{y^2+1}{4y}&\frac{y^2-1}{4y}\\
0&\frac{y^2-1}{4y}&\frac{y^2+1}{4y}
\end{pmatrix}
\end{align}
is a positive-definite matrix uniformly in $(\gamma,y)\in [\gamma_1+\nu,1]\times(0,1)$ thanks to Lemma \ref{hy}.
Hence, $(H_{\gamma,y}^{\mathbf s})^{-1/2}$ exists and is $O_\nu(1)$ so by a
standard Laplace method (see Proposition \ref{lap}),
\begin{align}\label{hyxx2}
\int_{|\phi^{-1}_{\mathbf s}(x')-\mathbf s|<\tilde r,\,v,\, v'\in \R^d}\exp\Big[-\frac {1}{2h} H_{\gamma,y}^{\mathbf s} \begin{pmatrix}x'\\v\\v' \end{pmatrix}\cdot \begin{pmatrix}x'\\v\\v' \end{pmatrix}\Big] |\det D_{x'}\phi^{-1}_{\mathbf s}|  \,\D x' \D v \D v'=(2\pi h)^{3d/2}\det (H_{\gamma,y}^{\mathbf s})^{-1/2}& \nonumber\\
	 \times \big(1+O_\nu (&h)\big)\nonumber\\
	=(2\pi h)^{3d/2}|\det \mathcal V_{\mathbf s}|^{-1/2}\frac{(4y)^{d/2}}{(1+y)^{d-1}\Big(1+\big(1+2|L_{\gamma,v}^{\mathbf s}|^2\big)y\Big)}\big(1+O_\nu (&h)\big)
\end{align}
where we also used Lemma \ref{det2}. The conclusion then follows from \eqref{Qythetagamma}, \eqref{hyxx} and \eqref{hyxx2}.
\end{proof}

\begin{lem}\label{qythetagamma}
Recall the notation \eqref{Gamma} and let $\gamma_1+\nu\leq z\leq \gamma < 1$.
For $y\in [0,1)$, we have
$$\Gamma_z^{-1} \circ \Gamma_\gamma(y)\in [0,1)$$
and
$$Q_{y,h}\Big(\tilde \theta^{\mathbf s}_{\gamma,h}\e^{-\frac{W^{\mathbf m}(x,v)}{h}}\Big)=(\Gamma_z^{-1} \circ \Gamma_\gamma)'(y)\,Q_{\Gamma_z^{-1} \circ \Gamma_\gamma(y),h}\Big(\tilde \theta^{\mathbf s}_{z,h}\e^{-\frac{W^{\mathbf m}(x,v)}{h}}\Big)$$
on $B(\mathbf s,\tilde r)\times \R^d_v$.
\end{lem}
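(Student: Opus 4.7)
The first assertion is purely algebraic. A direct computation shows that the Möbius maps $\Gamma_\gamma(\cdot)=(\cdot+\gamma)/(1+\gamma\,\cdot)$ from \eqref{Gamma} satisfy the composition law $\Gamma_\gamma\circ\Gamma_\delta=\Gamma_{\Gamma_\gamma(\delta)}$, so that they form a one-parameter group with $\Gamma_z^{-1}=\Gamma_{-z}$. Therefore $\phi:=\Gamma_z^{-1}\circ\Gamma_\gamma=\Gamma_\alpha$ with $\alpha=(\gamma-z)/(1-z\gamma)$. Under the assumption $\gamma_1+\nu\leq z\leq\gamma<1$ and since $\gamma_1>-1$, one has $1-z\gamma>0$ and $0\leq\gamma-z<1-z\gamma$, so $\alpha\in[0,1)$. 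Since $\Gamma_\alpha$ is increasing on $[0,1]$ with $\Gamma_\alpha(0)=\alpha$ and $\Gamma_\alpha(1)=1$, it maps $[0,1)$ into $[\alpha,1)\subset[0,1)$.

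For the identity, observe that in $Q_{y,h}=b_h^*\circ\mathrm{Op}_h(m_{y,h}\mathrm{Id})\circ b_h$, the operator $b_h^*=-h\partial_v+v/2$ acts only on $v$ and is independent of $y$, $\gamma$, $z$. It is therefore enough to prove the identity one level in, namely
\begin{align*}
\mathrm{Op}_h(m_{y,h}\mathrm{Id})\circ b_h\bigl(\tilde\theta^{\mathbf s}_{\gamma,h}e^{-W^{\mathbf m}/h}\bigr)=\phi'(y)\,\mathrm{Op}_h(m_{\phi(y),h}\mathrm{Id})\circ b_h\bigl(\tilde\theta^{\mathbf s}_{z,h}e^{-W^{\mathbf m}/h}\bigr)
\end{align*}
on $B(\mathbf s,\tilde r)\times\mathbb R^d_v$, after which one applies $b_h^*$ to both sides. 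Both members are computed explicitly by formula \eqref{opm2} of Lemma \ref{actionq2}, as a Gaussian integral in an auxiliary variable $v'\in\mathbb R^d$ with phase $\Phi(\gamma,y;v,v')=v'^2/4+y(v+v')^2/8+(v-v')^2/(8y)+L^{\mathbf s}_\gamma(x,v')^2/2$, multiplied by the prefactor $2h(2\pi h)^{-d/2}e^{-(V(x)-V(\mathbf m))/2h}(y+1)^{d-2}/(4y)^{d/2}$ and by the vector $L^{\mathbf s}_{\gamma,v}$.

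The core of the argument is then to exhibit an affine change of variable $v'\mapsto v''$, consisting of a scalar dilation composed with a $(v,x)$-dependent translation, that transforms $\Phi(\gamma,y;v,v')$ into $\Phi(z,\phi(y);v,v'')$ modulo a $v''$-independent remainder depending only on $(v,x)$. The quadratic parts in $v'$ and $v''$ can be matched thanks to the composition relation $\Gamma_\gamma(y)=\Gamma_z(\phi(y))$ together with the homogeneity of $L^{\mathbf s}_\gamma$ through the polynomial $P(\gamma)=4\gamma+(1+\gamma)^2$ encoded in \eqref{P}--\eqref{defL}. The residual $(v,x)$-dependent constant in the phase is absorbed on both sides by the common factor $e^{-\widetilde W^{\mathbf m}_{\Gamma_\gamma(y)}/h}=e^{-\widetilde W^{\mathbf m}_{\Gamma_z(\phi(y))}/h}$, while the Jacobian of the change of variable combines with the ratio of the two $C^\infty$-prefactors $(y+1)^{d-2}/(4y)^{d/2}$ versus $(\phi(y)+1)^{d-2}/(4\phi(y))^{d/2}$ and with the ratio $L^{\mathbf s}_{\gamma,v}/L^{\mathbf s}_{z,v}=(1-\gamma)P(z)^{1/2}/((1-z)P(\gamma)^{1/2})$ to produce exactly $\phi'(y)=(1-\alpha^2)/(1+y\alpha)^2$.

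The main obstacle is the algebraic verification of this final cancellation, which amounts to a finite set of rational identities in $y,\gamma,z$ involving $P(\gamma)$, $P(z)$, $P(\phi(y))$ and the Möbius composition; these identities are essentially forced by the group law, but the bookkeeping requires separating the contributions of the $\ell^{\mathbf s}_v$-direction (where $|L^{\mathbf s}_{\gamma,v}|^2=(1-\gamma)^2/P(\gamma)$ enters the quadratic form in $v'$) from those of its orthogonal complement. Once the correct affine change of variable is identified by matching the completed squares of $\Phi$, the verification is routine but delicate.
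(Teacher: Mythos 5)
Your first assertion is fine, and the Möbius group observation ($\Gamma_z^{-1}\circ\Gamma_\gamma=\Gamma_{(\gamma-z)/(1-z\gamma)}$) is a clean way to obtain the range; the paper simply records that each $\Gamma_\gamma:[0,1)\to[\gamma,1)$ is an increasing bijection with inverse \eqref{Gamma-1}, which is even more direct. Either argument works.

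For the identity itself you take a genuinely different and much heavier route. The paper never returns to the Gaussian $v'$-integral from \eqref{opm2}: it applies Lemma \ref{actionq2} pointwise in $y$ to get the closed form $Q_{y,h}\big(\tilde\theta^{\mathbf s}_{\gamma,h}e^{-W^{\mathbf m}/h}\big)=-h\,\partial_y\mathscr L(\gamma,y)\,e^{-\widetilde W^{\mathbf m}_{\Gamma_\gamma(y)}/h}\cdot(\tilde x_{\mathbf s},v)$, and then the whole proof is the chain rule. The key input is the composition relation \eqref{Lcircgamma}, $\mathscr L(\gamma,y)=\big(L^{\mathbf s}_{\Gamma_\gamma(y),x},L^{\mathbf s}_{\Gamma_\gamma(y),v}\big)=\mathscr L\big(z,\Gamma_z^{-1}\circ\Gamma_\gamma(y)\big)$, which says $\mathscr L$ factors through the Möbius group; differentiating in $y$ and noting $\Gamma_\gamma(y)=\Gamma_z(\phi(y))$ preserves the exponential gives the result in one line. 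Your plan — redo the completion of squares in the $v'$-integral of \eqref{opm2} and match the two sides by an affine change of variable — in effect reproves \eqref{Lcircgamma}, \eqref{dyl} and \eqref{egalphase} from scratch. It can be made to work, but it bypasses precisely the structural shortcut that Lemma \ref{actionq2} was built to provide.

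There is also a concrete error in your sketch: the affine map $v'\mapsto v''$ cannot be a scalar dilation plus translation when $d>1$. The Hessian in $v'$ of the phase is $\tfrac{(y+1)^2}{4y}\mathrm{Id}+L^{\mathbf s}_{\gamma,v}(L^{\mathbf s}_{\gamma,v})^t$, whose eigenvalue ratio along $\R\ell^{\mathbf s}_v$ versus $(\ell^{\mathbf s}_v)^\perp$ is $1+\tfrac{4y(1-\gamma)^2}{(y+1)^2P(\gamma)}$. Using $(y+1)^2P(\gamma)+4y(1-\gamma)^2=P(\Gamma_\gamma(y))(1+y\gamma)^2$, matching this ratio to the $(z,\phi(y))$ version would force $(1+\gamma)^2/P(\gamma)=(1+z)^2/P(z)$, which fails whenever $\gamma\neq z$. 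So the linear part of your change of variable must act with two distinct scalings on $\R\ell^{\mathbf s}_v$ and on its orthogonal complement. This is not fatal — the determinant of the resulting non-scalar matrix still combines correctly with the prefactors and with $L^{\mathbf s}_{\gamma,v}/L^{\mathbf s}_{z,v}$ to produce $\phi'(y)$ — but it means the "routine but delicate" verification you defer is both mandatory and more involved than a dilation-plus-translation picture suggests, whereas the paper's chain-rule argument avoids it entirely.
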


\begin{proof}
First, notice that for all $\gamma\in [\gamma_1+\nu,1)$, the function $\Gamma_\gamma:[0,1)\to [\gamma,1)$ is an increasing bijection whose inverse is given by
\begin{align}\label{Gamma-1}
\Gamma_\gamma^{-1}(y)=\frac{y-\gamma}{1-y\gamma}
\end{align}
so the first assertion follows from the hypothesis on $z$ and $\gamma$.
Now, by Lemma \ref{actionq2} applied with $Q_{y,h}$ instead of $Q_h$, %and $\tilde\theta_{\gamma,h}$ instead of $$
we get using the notation \eqref{Ltilde} as well as \eqref{Lcircgamma} that on $B(\mathbf s,\tilde r)\times \R^d_v$,
\begin{align}\label{Qythetaz}
Q_{y,h}\Big(\tilde \theta^{\mathbf s}_{\gamma,h}\e^{-\frac{W^{\mathbf m}(x,v)}{h}}\Big)=-h\, \partial_y \mathscr L(\gamma,y)\,\e^{-\frac{\widetilde W_{\Gamma_\gamma(y)}(x,v)}{h}}\,\cdot \begin{pmatrix}\tilde x_{\mathbf s}\\v\end{pmatrix}
\end{align}
(here we once again disregarded the fact that the linear form $L_\gamma$ is twisted in $x$ as $Q_{y,h}$ only acts in $v$).
Thus, denoting $\partial_2 \mathscr L(\gamma,\cdot)$ the derivative of $\mathscr L$ w.r.t its second argument and still using \eqref{Lcircgamma}, we also have 
\begin{align}
Q_{y,h}\Big(\tilde \theta^{\mathbf s}_{\gamma,h}\e^{-\frac{W^{\mathbf m}(x,v)}{h}}\Big)&=-h\, \partial_y(L_{\Gamma_{\gamma}(y)})\,\e^{-\frac{\widetilde W_{\Gamma_\gamma(y)}(x,v)}{h}}\,\cdot \begin{pmatrix}\tilde x_{\mathbf s}\\v\end{pmatrix}\\
		&=-h\, \partial_y\Big(\mathscr L\big(z,\Gamma_z^{-1}\circ\Gamma_{\gamma}(y)\big)\Big)\,\e^{-\frac{\widetilde W_{\Gamma_\gamma(y)}(x,v)}{h}}\,\cdot \begin{pmatrix}\tilde x_{\mathbf s}\\v\end{pmatrix}\\
				&=-h \,(\Gamma_z^{-1}\circ\Gamma_{\gamma})'(y)\, \partial_2\mathscr L\big(z,\Gamma_z^{-1}\circ\Gamma_{\gamma}(y)\big)\,\e^{-\frac{\widetilde W_{\Gamma_\gamma(y)}(x,v)}{h}}\,\cdot \begin{pmatrix}\tilde x_{\mathbf s}\\v\end{pmatrix}
\end{align}
so \eqref{Qythetaz} with $Q_{\Gamma_z^{-1} \circ \Gamma_\gamma(y),h}$ and $\tilde \theta^{\mathbf s}_{z,h}$ yields the last statement.
\end{proof}

\begin{prop}\label{Pff}
With the notations \eqref{hessiennes}, \eqref{tau}, \eqref{k0} and \eqref{lamtilde}, we have for $\mathbf m\in \mathtt U^{(0)}\backslash \{\underline{\mathbf m}\}$
$$\tilde \lambda^{\mathbf m}_{\nu,h}=h\, \tilde \varrho^{}_{\nu,h}(\mathbf m)\, \e^{\frac{-2S(\mathbf m)}{h}}$$
with
\begin{align}
\tilde \varrho_{\nu,h}(\mathbf m)=&\frac{1}{\pi}\sum_{\mathbf s\in \mathbf j(\mathbf m)}\bigg(\frac{2+\sqrt2}{2-\sqrt2}\bigg)^{\frac{1}{\sqrt{|\tau_{\mathbf s}|}}}\bigg(\frac{\det \mathcal V_{\mathbf m}}{|\det \mathcal V_{\mathbf s}|}\bigg)^{1/2}   \int_{\gamma_1\leq z\leq \gamma < 1} k_0^{\mathbf s}(\gamma) k_0^{\mathbf s}(z) \ln \bigg(2\,\frac{(1+z)(1+\gamma)}{1+3z+3\gamma+z\gamma}\bigg) \, \D z \D \gamma\\
		& \qquad+O_\nu\big(h\big)+O\Big(\nu^{\frac{1}{2\sqrt{|\tau_{\mathbf s}|}}}\Big).
\end{align}
%\langle P_h \tilde f^{\mathbf m}_{\nu,h},\tilde f^{\mathbf m}_{\nu,h} \rangle&=\frac{h}{\pi}\, \e^{\frac{-2S(\mathbf m)}{h}} \bigg(\frac{\det \mathcal V_{\mathbf m}}{|\det \mathcal V_{\mathbf s}|}\bigg)^{1/2}   \sum_{\mathbf s\in \mathbf j(\mathbf m)} \int_{\gamma_1\leq z\leq \gamma < 1} k_0^{\mathbf s}(\gamma) k_0^{\mathbf s}(z) \ln \bigg(2\,\frac{(1+z)(1+\gamma)}{1+3z+3\gamma+z\gamma}\bigg) \, \D z \D \gamma\\
%		&\qquad+O_\nu\Big(h^{2}\e^{-\frac{2S(\mathbf m)}{h}}\Big)+O\Big(\nu ^{\frac{1}{2\sqrt{|\tau_{\mathbf s}|}}}h\e^{-\frac{2S(\mathbf m)}{h}}\Big).
%\end{align}
\end{prop}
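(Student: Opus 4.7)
The plan is to compute $\langle Q_h f^{\mathbf m}_{\nu,h},f^{\mathbf m}_{\nu,h}\rangle$ by reducing to the pieces $\langle Q_{y,h}(\tilde\theta^{\mathbf s}_{\gamma,h}e^{-W^{\mathbf m}/h}),\tilde\theta^{\mathbf s}_{z,h}e^{-W^{\mathbf m}/h}\rangle_{\tilde r}$ already controlled by Lemmas \ref{qygammagamma} and \ref{qythetagamma}, then assemble using \eqref{f^2}. Since $X_0^h$ is skew-adjoint, $\tilde\lambda^{\mathbf m}_{\nu,h}=\langle Q_h f^{\mathbf m}_{\nu,h},f^{\mathbf m}_{\nu,h}\rangle/\|f^{\mathbf m}_{\nu,h}\|^2$. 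Writing $Q_h=\int_0^1 Q_{y,h}\,\D y$ via Proposition \ref{facto} and using that $b_h(e^{-W^{\mathbf m}/h})=0$ (as $W^{\mathbf m}$ contains $v^2/4$), we get $b_h f^{\mathbf m}_{\nu,h}=h\partial_v(\chi_{\mathbf m}\theta^{\mathbf m}_{\nu,h})e^{-W^{\mathbf m}/h}$. Using the expression \eqref{nablatheta} for $\nabla\theta^{\mathbf m}_{\nu,h}$ and repeating the localization arguments from the proof of Proposition \ref{phf} (removing the cut-offs $\zeta$, $\chi_{\mathbf m}$ and $\mathbf 1_{B_0(\mathbf s,r)}$ against the gaussian weight, cf.\ \eqref{chizeta-1}--\eqref{indic}), we get on each $B(\mathbf s,\tilde r)\times\R^d_v$ that, up to a negligible error,
$$b_h f^{\mathbf m}_{\nu,h}=\tfrac12 (A^{\mathbf s}_{\nu,h})^{-1}\int_{\gamma_1+\nu}^1 k^{\mathbf s}_\nu(\gamma)\,b_h(\tilde\theta^{\mathbf s}_{\gamma,h}e^{-W^{\mathbf m}/h})\,\D\gamma.$$

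Expanding the bilinear form $\langle\mathrm{Op}_h(m_{y,h})b_h f^{\mathbf m}_{\nu,h},b_h f^{\mathbf m}_{\nu,h}\rangle$ and using item b) of Hypothesis \ref{jvide} to kill cross-terms between distinct $\mathbf s,\mathbf s'\in\mathbf j(\mathbf m)$, we are reduced to
$$\tfrac14\sum_{\mathbf s\in\mathbf j(\mathbf m)}(A^{\mathbf s}_{\nu,h})^{-2}\iint k^{\mathbf s}_\nu(\gamma)k^{\mathbf s}_\nu(z)\int_0^1\langle Q_{y,h}(\tilde\theta^{\mathbf s}_{\gamma,h}e^{-W^{\mathbf m}/h}),\tilde\theta^{\mathbf s}_{z,h}e^{-W^{\mathbf m}/h}\rangle_{\tilde r}\,\D y\,\D\gamma\,\D z.$$
Assume $z\leq\gamma$; by Lemma \ref{qythetagamma} followed by the change of variables $y'=\Gamma_z^{-1}\circ\Gamma_\gamma(y)$ (which takes $[0,1]$ bijectively onto $[\tfrac{\gamma-z}{1-z\gamma},1]$ by \eqref{Gamma-1}),
$$\int_0^1\langle Q_{y,h}(\tilde\theta^{\mathbf s}_{\gamma,h}e^{-W^{\mathbf m}/h}),\tilde\theta^{\mathbf s}_{z,h}e^{-W^{\mathbf m}/h}\rangle_{\tilde r}\,\D y=\int_{\frac{\gamma-z}{1-z\gamma}}^1\langle Q_{y',h}(\tilde\theta^{\mathbf s}_{z,h}e^{-W^{\mathbf m}/h}),\tilde\theta^{\mathbf s}_{z,h}e^{-W^{\mathbf m}/h}\rangle_{\tilde r}\,\D y'.$$
Lemma \ref{qygammagamma} then identifies the integrand as $2h^2 e^{-2S(\mathbf m)/h}(2\pi h)^d|\det\mathcal V_{\mathbf s}|^{-1/2}\frac{|L^{\mathbf s}_{z,v}|^2}{(1+y')(1+(1+2|L^{\mathbf s}_{z,v}|^2)y')}(1+O_\nu(h))$, and partial fractions produce a logarithm: setting $c=|L^{\mathbf s}_{z,v}|^2=(1-z)^2/P(z)$,
$$\int_{\frac{\gamma-z}{1-z\gamma}}^1\frac{\D y'}{(1+y')(1+(1+2c)y')}=\frac{1}{2c}\ln\frac{(1+c)(1+y_0)}{1+(1+2c)y_0}.$$

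The key algebraic step is then to simplify this logarithm. Using $1+c=2(1+z)^2/P(z)$ and computing $(1+y_0)=(1-z)(1+\gamma)/(1-z\gamma)$, the crucial identity to check is
$$P(z)(1-z\gamma)+\bigl(P(z)+2(1-z)^2\bigr)(\gamma-z)=(1-z^2)(1+3z+3\gamma+z\gamma),$$
which after expansion factors as $(1-z)(1+z)[P(z)(1+\gamma)\cdot(1+z)^{-1}+\dots]$ once one recognizes $P(z)(1+\gamma)+2(1-z)(\gamma-z)=(1+z)(1+3z+3\gamma+z\gamma)$. These manipulations yield $\ln\bigl(2(1+z)(1+\gamma)/(1+3z+3\gamma+z\gamma)\bigr)$, a \emph{symmetric} expression in $(\gamma,z)$; combined with the cancellation of the prefactor $|L^{\mathbf s}_{z,v}|^2/(2c)=1/2$, we obtain the clean formula.

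Finally, by symmetry of the integrand in $(\gamma,z)$, the full double integral over $[\gamma_1+\nu,1]^2$ equals twice its restriction to $\{z\leq\gamma\}$, which absorbs the $\tfrac14$ into $\tfrac12$. Plugging $(A^{\mathbf s}_{\nu,h})^{-2}=2/(\pi h)+O(e^{-\alpha/h})$ from \eqref{approxa}, $k^{\mathbf s}_\nu=k^{\mathbf s}_0/B^{\mathbf s}_\nu$ with $(B^{\mathbf s}_\nu)^{-2}=\bigl((2+\sqrt2)/(2-\sqrt2)\bigr)^{1/\sqrt{|\tau_{\mathbf s}|}}+O(\nu^{1/(2\sqrt{|\tau_{\mathbf s}|})})$ from \eqref{bnu} and the discussion below it, and dividing by $\|f^{\mathbf m}_{\nu,h}\|^2=(2\pi h)^d\det(\mathcal V_{\mathbf m})^{-1/2}(1+O(h))$ from \eqref{f^2}, produces the announced formula with error $O_\nu(h)+O(\nu^{1/(2\sqrt{|\tau_{\mathbf s}|})})$. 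The main obstacle is the algebraic simplification of the log and careful bookkeeping of the $\nu$- versus $h$-errors (the $O_\nu(h)$ from Lemmas \ref{qygammagamma} and \eqref{f^2}, versus the $O(\nu^{1/(2\sqrt{|\tau_{\mathbf s}|})})$ from truncating the CDF away from $\gamma_1$); everything else is a bookkeeping exercise built on Lemmas \ref{qygammagamma} and \ref{qythetagamma}.
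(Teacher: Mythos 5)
Your proof follows essentially the same path as the paper's: reduce to $\langle Q_h f,f\rangle$ by skew-adjointness, factor $Q_h=\int_0^1 Q_{y,h}\,\D y$ via Proposition \ref{facto}, express $b_h f^{\mathbf m}_{\nu,h}$ as a $\gamma$-superposition of $b_h(\tilde\theta^{\mathbf s}_{\gamma,h}\e^{-W^{\mathbf m}/h})$, restrict to $\{z\le\gamma\}$ by symmetry/self-adjointness of $Q_{y,h}$, apply Lemma \ref{qythetagamma} with the change of variables $\tilde y=\Gamma_z^{-1}\circ\Gamma_\gamma(y)$, then Lemma \ref{qygammagamma}, and finish with the partial-fraction log simplification and the error bookkeeping for the $B_\nu^{\mathbf s}$-renormalization, exactly as in the text. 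One small inaccuracy: the vanishing of cross-terms between distinct $\mathbf s,\mathbf s'\in\mathbf j(\mathbf m)$ is not a consequence of Hypothesis \ref{jvide}~\ref{jvide2} (which only concerns distinct minima $\mathbf m\neq\mathbf m'$); it follows simply because $\mathrm{Op}_h(m_h\,\mathrm{Id})$ acts only in $v$ and is therefore local in $x$, while the factors $\1_{|x-\mathbf s|<\tilde r}$ for distinct $\mathbf s$ have disjoint supports once $\tilde r$ is small enough — but the conclusion you draw is correct.
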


\begin{proof}
As we mentionned at the begining of the section, since $X_0^h$ is a skew-adjoint differential operator and $\tilde f^{\mathbf m}_{\nu,h}$ is real valued, we have
$$\langle P_h \tilde f^{\mathbf m}_{\nu,h},\tilde f^{\mathbf m}_{\nu,h} \rangle=\langle Q_h\tilde f^{\mathbf m}_{\nu,h},\tilde f^{\mathbf m}_{\nu,h} \rangle.$$
Now by Proposition \ref{facto}, we get
\begin{align}\label{qff}
\langle Q_h f^{\mathbf m}_{\nu,h}, f^{\mathbf m}_{\nu,h} \rangle= \big\langle \mathrm{Op}_h (m_{h}\,\mathrm{Id}) \big( b_h f^{\mathbf m}_{\nu,h}\big),b_h f^{\mathbf m}_{\nu,h} \big\rangle
\end{align}
and we saw through \eqref{qf}-\eqref{bhfsimple} that
\begin{align}
b_hf^{\mathbf m}_{\nu,h}&=\Big(\frac {h}{2\pi}\Big)^{1/2}  \sum_{\mathbf s\in \mathbf j(\mathbf m)}  \int_{\gamma_1+\nu}^{1} k_\nu^{\mathbf s}(\gamma)   \e^{-\frac{\widetilde W^{\mathbf m}_{\gamma}}{h}}  L_{\gamma,v}^{\mathbf s}  \D \gamma\, \1_{|x-\mathbf s|<\tilde r}+O_\nu\Big(h^{\frac{3+d}{2}}\e^{-\frac{S(\mathbf m)}{h}}\Big)\label{bh1}\\
		&=(2\pi h)^{-1/2}  \sum_{\mathbf s\in \mathbf j(\mathbf m)}  \int_{\gamma_1+\nu}^{1} k_\nu^{\mathbf s}(\gamma) b_h\Big(\tilde\theta_{\gamma,h}\e^{-\frac{W^{\mathbf m}(x,v)}{h}}\Big)   \D \gamma\, \1_{|x-\mathbf s|<\tilde r}+O_\nu\Big(h^{\frac{3+d}{2}}\e^{-\frac{S(\mathbf m)}{h}}\Big) \label{bhf=bhtheta}
\end{align}
Note that \eqref{bh1} also implies
\begin{align}
b_hf^{\mathbf m}_{\nu,h}=O_\nu\Big(h^{\frac{1+d}{2}}\e^{-\frac{S(\mathbf m)}{h}}\Big). \label{bhf=o}
\end{align}
Combining the boundedness of $\mathrm{Op}_h (m_{h}\,\mathrm{Id})$ with \eqref{bhf=bhtheta}-\eqref{bhf=o} and using the notation \eqref{qy}, \eqref{qff} becomes
\begin{align}
\langle Q_h f^{\mathbf m}_{\nu,h}, f^{\mathbf m}_{\nu,h} &\rangle=(2\pi h)^{-1}  \sum_{\mathbf s\in \mathbf j(\mathbf m)} \int_{[\gamma_1+\nu,1]^2}  k_\nu^{\mathbf s}(\gamma) k_\nu^{\mathbf s}(z)\Big\langle Q_h \Big(\tilde\theta_{\gamma,h}\e^{-\frac{W^{\mathbf m}(x,v)}{h}}\Big) ,\tilde\theta_{z,h}\e^{-\frac{W^{\mathbf m}(x,v)}{h}} \Big\rangle_{\tilde r} \,\D \gamma \D z  \\
		&\qquad+O_\nu\Big(h^{d+2}\e^{-\frac{2S(\mathbf m)}{h}}\Big)\\
		&=(2\pi h)^{-1}  \sum_{\mathbf s\in \mathbf j(\mathbf m)}\int_0^1 \int_{[\gamma_1+\nu,1]^2}  k_\nu^{\mathbf s}(\gamma) k_\nu^{\mathbf s}(z)\Big\langle Q_{y,h} \Big(\tilde\theta_{\gamma,h}\e^{-\frac{W^{\mathbf m}(x,v)}{h}}\Big) ,\tilde\theta_{z,h}\e^{-\frac{W^{\mathbf m}(x,v)}{h}} \Big\rangle_{\tilde r} \,\D \gamma \D z \D y \\
		&\qquad+O_\nu\Big(h^{d+2}\e^{-\frac{2S(\mathbf m)}{h}}\Big)\\
		&=2(2\pi h)^{-1}  \sum_{\mathbf s\in \mathbf j(\mathbf m)}\int_0^1 \int_{\gamma_1+\nu\leq z\leq \gamma < 1}  k_\nu^{\mathbf s}(\gamma) k_\nu^{\mathbf s}(z)\Big\langle Q_{y,h} \Big(\tilde\theta_{\gamma,h}\e^{-\frac{W^{\mathbf m}(x,v)}{h}}\Big) ,\tilde\theta_{z,h}\e^{-\frac{W^{\mathbf m}(x,v)}{h}} \Big\rangle_{\tilde r} \,\D z \D \gamma \D y \label{qygammaz}\\
		&\qquad+O_\nu\Big(h^{d+2}\e^{-\frac{2S(\mathbf m)}{h}}\Big)
\end{align}
where for the last equation we used the fact that $Q_{y,h}$ is self-adjoint.
Applying Lemma \ref{qythetagamma} together with the change of variables $\tilde y=\Gamma_z^{-1} \circ \Gamma_\gamma(y)$, we get that \eqref{qygammaz} yields
\begin{align}
\langle Q_h &f^{\mathbf m}_{\nu,h}, f^{\mathbf m}_{\nu,h} \rangle+O_\nu\Big(h^{d+2}\e^{-\frac{2S(\mathbf m)}{h}}\Big)=\\
	&2(2\pi h)^{-1}  \sum_{\mathbf s\in \mathbf j(\mathbf m)} \int_{\gamma_1+\nu\leq z\leq \gamma < 1} \int_{\Gamma_z^{-1}(\gamma)}^1  k_\nu^{\mathbf s}(\gamma) k_\nu^{\mathbf s}(z)\Big\langle Q_{\tilde y,h} \Big(\tilde\theta_{z,h}\e^{-\frac{W^{\mathbf m}(x,v)}{h}}\Big) ,\tilde\theta_{z,h}\e^{-\frac{W^{\mathbf m}(x,v)}{h}} \Big\rangle_{\tilde r} \,\D \tilde y \D z \D \gamma
\end{align}
which by Lemma \ref{qygammagamma} is further equal to
\begin{align}\label{3inte}
\text{}\qquad\frac{2}{\pi} h (2\pi h)^{d} \e^{\frac{-2S(\mathbf m)}{h}}  \sum_{\mathbf s\in \mathbf j(\mathbf m)} |\det \mathcal V_{\mathbf s}|^{-1/2} \int_{\gamma_1+\nu\leq z\leq \gamma < 1} \int_{\Gamma_z^{-1}(\gamma)}^1  \frac{ k_\nu^{\mathbf s}(\gamma) k_\nu^{\mathbf s}(z) |L_{z,v}^{\mathbf s}|^2}{(1+\tilde y)\Big(1+\big(1+2|L_{z,v}^{\mathbf s}|^2\big)\tilde y\Big)}  
 \,\D \tilde y \D z \D \gamma.
\end{align}
By partial fraction decomposition, the $\tilde y$-integral becomes
\begin{align}
\int_{\Gamma_z^{-1}(\gamma)}^1  \frac{1}{(1+\tilde y)\Big(1+\big(1+2|L_{z,v}^{\mathbf s}|^2\big)\tilde y\Big)} \,\D \tilde y&=\frac{1}{2|L_{z,v}^{\mathbf s}|^2}\int_{\Gamma_z^{-1}(\gamma)}^1  \frac{1+2|L_{z,v}^{\mathbf s}|^2}{1+\big(1+2|L_{z,v}^{\mathbf s}|^2\big)\tilde y}- \frac{1}{1+\tilde y} \,\D \tilde y\\
		&=\frac{1}{2|L_{z,v}^{\mathbf s}|^2} \ln \Bigg( \frac{\big(1+|L_{z,v}^{\mathbf s}|^2\big)\big(1+\Gamma_z^{-1}(\gamma)\big)}{1+\big(1+2|L_{z,v}^{\mathbf s}|^2\big)\Gamma_z^{-1}(\gamma)} \Bigg) \label{logmoche}
\end{align}
and using \eqref{P}-\eqref{defL} as well as \eqref{Gamma-1}, the quantity in the logarithm from \eqref{logmoche} simplifies as follows:
\begin{align}
\frac{\big(1+|L_{z,v}^{\mathbf s}|^2\big)\big(1+\Gamma_z^{-1}(\gamma)\big)}{1+\big(1+2|L_{z,v}^{\mathbf s}|^2\big)\Gamma_z^{-1}(\gamma)}&=\frac{\big(P(z)+(1-z)^2\big)(1-z)(1+\gamma)}{P(z)(1-\gamma z) +(3z^2+2z+3)(\gamma-z)}\\
		&=2\,\frac{(1+z)^2(1-z)(1+\gamma)}{(1-z^2)(1+3z+3\gamma+z\gamma)}\\
		&=2\,\frac{(1+z)(1+\gamma)}{1+3z+3\gamma+z\gamma}. \label{logbeau}
\end{align}
Putting together \eqref{3inte}, \eqref{logmoche}, \eqref{logbeau} and using \eqref{f^2}, we get
\begin{align}
&\langle P_h \tilde f^{\mathbf m}_{\nu,h},\tilde f^{\mathbf m}_{\nu,h} \rangle+O_\nu\Big(h^{2}\e^{-\frac{2S(\mathbf m)}{h}}\Big)=\\
		&\qquad\qquad\frac{h}{\pi}\, \e^{\frac{-2S(\mathbf m)}{h}}    \sum_{\mathbf s\in \mathbf j(\mathbf m)} \bigg(\frac{\det \mathcal V_{\mathbf m}}{|\det \mathcal V_{\mathbf s}|}\bigg)^{1/2} \int_{\gamma_1+\nu\leq z\leq \gamma < 1} k_\nu^{\mathbf s}(\gamma) k_\nu^{\mathbf s}(z) \ln \bigg(2\,\frac{(1+z)(1+\gamma)}{1+3z+3\gamma+z\gamma}\bigg) \, \D z \D \gamma. \label{intknu}
\end{align}
Now, the function $1+3z+3\gamma+z\gamma$ is non-negative on $[\gamma_1,1]^2$ and vanishes only at $(\gamma_1,\gamma_1)$.
Moreover, we have by Taylor expansion that
$$1+3z+3\gamma+z\gamma\geq \frac{|(\gamma,z)-(\gamma_1,\gamma_1)|}{C}\geq \max \Big( \frac{z-\gamma_1}{C} , \frac{\gamma-\gamma_1}{C} \Big)$$
for $(\gamma,z)\in [\gamma_1,1]^2$ close enough to $(\gamma_1,\gamma_1)$ and thus
$$\ln \bigg(2\,\frac{(1+z)(1+\gamma)}{1+3z+3\gamma+z\gamma}\bigg)=O\big(|\ln (z-\gamma_1)|\big)$$
holds as well as
$$\ln \bigg(2\,\frac{(1+z)(1+\gamma)}{1+3z+3\gamma+z\gamma}\bigg)=O\big(|\ln (\gamma-\gamma_1)|\big).$$
Besides, by \eqref{knu} and \eqref{bnu}, we have 
\begin{align}\label{knuk0}
k^{\mathbf s}_\nu(z)=\bigg(\frac{2-\sqrt2}{2+\sqrt2}\bigg)^{\frac{-1}{2\sqrt{|\tau_{\mathbf s}|}}}k^{\mathbf s}_0(z)\bigg(1+O\Big(\nu ^{\frac{1}{2\sqrt{|\tau_{\mathbf s}|}}}\Big)\bigg)
\end{align}
with $k^{\mathbf s}_0(z)=O\Big(|z-\gamma_1|^{\frac{1}{2\sqrt{|\tau_{\mathbf s}|}}-1}\Big)$ on $[\gamma_1,1]$.
Consequently, the integral
$$\int_{\gamma_1\leq z\leq \gamma < 1} k_0^{\mathbf s}(\gamma) k_0^{\mathbf s}(z) \ln \bigg(2\,\frac{(1+z)(1+\gamma)}{1+3z+3\gamma+z\gamma}\bigg) \, \D z \D \gamma$$
exists and we have
\begin{align}
\int_{\gamma_1+\nu\leq z\leq \gamma < 1} k_0^{\mathbf s}(\gamma) k_0^{\mathbf s}(z)& \ln \bigg(2\,\frac{(1+z)(1+\gamma)}{1+3z+3\gamma+z\gamma}\bigg) \, \D z \D \gamma+O\Big(\nu ^{\frac{1}{2\sqrt{|\tau_{\mathbf s}|}}}\Big) \label{intk0} \\
		&=\int_{\gamma_1\leq z\leq \gamma < 1} k_0^{\mathbf s}(\gamma) k_0^{\mathbf s}(z) \ln \bigg(2\,\frac{(1+z)(1+\gamma)}{1+3z+3\gamma+z\gamma}\bigg) \, \D z \D \gamma. 
\end{align}
Combining \eqref{intknu}, \eqref{knuk0} and \eqref{intk0}, we get the announced result.
\end{proof}

\section{Proof of the main results}

\hip
We now introduce a series of results which will enable us to go from the approximated eigenvalues of $P_h$ to the actual ones.

\begin{lem}\label{Pf^2}
Let $\mathbf m \in \mathtt U^{(0)}\backslash \{\underline{\mathbf m}\}$.
Using the notations \eqref{tau}, \eqref{ftilde} and \eqref{lamtilde}, we have 
\begin{enumerate}[label=\roman*)]
\item $\|P_h\tilde f^{\mathbf m}_{\nu ,h}\|=\sqrt{h\tilde \lambda^{\mathbf m}_{\nu,h}}\, \bigg( O_{\nu}\Big(h^{\frac{1}{2}}\Big)+O\Big(\nu\,{}^{\frac{1}{2\sqrt{|\tau_{\mathbf s}|}}}\,|\ln (\nu)|\Big)\bigg)$
\item $\|P_h^*\tilde f^{\mathbf m}_{\nu ,h}\|=\sqrt{h\tilde \lambda^{\mathbf m}_{\nu,h}}\, \bigg( O_{\nu}\Big(h^{\frac{1}{2}}\Big)+O\Big(|\ln (\nu)|\Big)\bigg)$.
\end{enumerate}
\end{lem}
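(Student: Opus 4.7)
Both estimates are essentially a renormalization of the already established controls on $P_h f^{\mathbf m}_{\nu,h}$ and $P_h^* f^{\mathbf m}_{\nu,h}$, combined with the asymptotic \eqref{f^2} for $\|f^{\mathbf m}_{\nu,h}\|$ and the identification $\tilde \lambda^{\mathbf m}_{\nu,h} = h\,\tilde \varrho_{\nu,h}(\mathbf m)\,\e^{-2S(\mathbf m)/h}$ from Proposition \ref{Pff}. The leading term of $\tilde\varrho_{\nu,h}(\mathbf m)$ is a strictly positive constant (the integrand of the explicit $(z,\gamma)$-integral is positive on its domain), so for $\nu$ small and then $h$ small enough the quantity $\tilde\varrho_{\nu,h}(\mathbf m)$ stays bounded above and below by positive constants, and $h\,\e^{-S(\mathbf m)/h}$ is equivalent to $\sqrt{h\tilde\lambda^{\mathbf m}_{\nu,h}}$.

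Part (i) then follows immediately from Proposition \ref{pf}: divide the bound stated there by $\|f^{\mathbf m}_{\nu,h}\|$, use \eqref{f^2}, and replace $h\,\e^{-S(\mathbf m)/h}$ by $\sqrt{h\tilde\lambda^{\mathbf m}_{\nu,h}}$ in the resulting estimate.

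For part (ii), the plan is to repeat the proof of Proposition \ref{pf} for $P_h^*$ using the expression from Remark \ref{Ph*f}. The only new ingredient is an analogue of Lemma \ref{petitom}: because of the sign flip on the integral term in $\overset{*}{\om}{}^{\mathbf m}_{\nu,z}$, the near-cancellation that made $\om^{\mathbf m}_{\nu,z}$ of order $\nu^{1/(2\sqrt{|\tau_{\mathbf s}|})}$ turns into an addition. With the $\ell^{\mathbf s}$ and $k^{\mathbf s}_\nu$ from Lemma \ref{sisol}, the same algebra as in the proof of Lemma \ref{petitom} yields
\[
\overset{*}{\om}{}^{\mathbf m}_{\nu,z}(x,v) = -\bigg[\frac{\sqrt{|\tau_{\mathbf s}|}P(z)}{2\sqrt 2}\,k^{\mathbf s}_\nu(z) + K^{\mathbf s}_\nu(z)\bigg]\begin{pmatrix}\partial_z L^{\mathbf s}_{z;x}\\ \partial_z L^{\mathbf s}_{z;v}\end{pmatrix}\cdot \begin{pmatrix}\tilde x_{\mathbf s}\\ v\end{pmatrix},
\]
where the bracketed factor is uniformly $O(1)$ in $\nu$ and $z$: the CDF satisfies $K^{\mathbf s}_\nu\leq 1$, and $P(z)k^{\mathbf s}_\nu(z)$ is bounded by a direct computation using the explicit form \eqref{k0} of $k^{\mathbf s}_0$ together with $P(z)=(z-\gamma_1)(z-\gamma_2)$. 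Running the Minkowski and Laplace-method argument of Proposition \ref{pf} with this new prefactor reduces the estimate to controlling $\int_{\gamma_1+\nu}^1 \big[k^{\mathbf s}_\nu(z) + K^{\mathbf s}_\nu(z)/P(z)\big]\,\D z$, whose first term integrates to $1$ and whose second is $O(|\ln\nu|)$ since $K^{\mathbf s}_\nu\leq 1$ and $\int_{\gamma_1+\nu}^1 \D z/P(z) = O(|\ln\nu|)$. Together with the $O_{\nu,L^2}(h^{(3+d)/2}\e^{-S(\mathbf m)/h})$ remainder of Remark \ref{Ph*f}, this gives $\|P_h^* f^{\mathbf m}_{\nu,h}\| = h^{1+d/2}\e^{-S(\mathbf m)/h}\big(O_\nu(h^{1/2}) + O(|\ln\nu|)\big)$, and the same normalization and conversion as in (i) yield (ii).

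The only real obstacle is the computation of $\overset{*}{\om}{}^{\mathbf m}_{\nu,z}$ above: it is the unique point where the argument departs from the proof of Proposition \ref{pf}, and it is what forces the weaker factor $|\ln\nu|$ instead of the $\nu^{1/(2\sqrt{|\tau_{\mathbf s}|})}|\ln\nu|$ from (i). All the remaining steps are a direct transcription of the Gaussian estimates already carried out to prove Proposition \ref{pf}.
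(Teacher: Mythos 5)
Your proposal is correct and follows essentially the same route as the paper: part (i) is read off by combining Proposition~\ref{pf} with the normalization~\eqref{f^2} and the fact from Proposition~\ref{Pff} that $\tilde\varrho_{\nu,h}(\mathbf m)$ is bounded above and below by positive constants for $\nu$, $h$ small (the positivity of $\tilde\varrho_0$ follows from $(1-z)(1-\gamma)>0$ on the domain, as you observe); and part (ii) is obtained by mimicking the proof of Proposition~\ref{pf} with $\overset{*}{\om}{}^{\mathbf m}_{\nu,z}$ from Remark~\ref{Ph*f} in place of $\om^{\mathbf m}_{\nu,z}$. Your explicit identification $\overset{*}{\om}{}^{\mathbf m}_{\nu,z}=-\bigl[\tfrac{\sqrt{|\tau_{\mathbf s}|}P(z)}{2\sqrt2}k^{\mathbf s}_\nu(z)+K^{\mathbf s}_\nu(z)\bigr]\bigl(\partial_z L^{\mathbf s}_{z;x},\partial_z L^{\mathbf s}_{z;v}\bigr)\cdot(\tilde x_{\mathbf s},v)$ and the observation that $P(z)k^{\mathbf s}_0(z)=\tfrac{\gamma_1-\gamma_2}{2\sqrt{|\tau_{\mathbf s}|}}\bigl(\tfrac{z-\gamma_1}{z-\gamma_2}\bigr)^{1/(2\sqrt{|\tau_{\mathbf s}|})}$ is bounded are exactly what justifies the paper's terse claim that $\overset{*}{\om}{}^{\mathbf m}_{\nu,z}=O(1)\bigl(\partial_z L^{\mathbf s}_{z;x},\partial_z L^{\mathbf s}_{z;v}\bigr)\cdot(\tilde x_{\mathbf s},v)$; whether one then pulls out this $O(1)$ uniformly or keeps the $z$-dependent factor and integrates it against $|a_z|=4/P(z)$ as you do, the outer $z$-integral gives the same $O(|\ln\nu|)$.
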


\begin{proof} 
The first item is an immediate consequence of Propositions \ref{pf} and \ref{Pff}.
The second one can be obtained similarly using Remark \ref{Ph*f} and mimicking the proof of Proposition \ref{pf} after noticing that
$$\overset{*}{\om}{}^{\mathbf m}_{\nu,z}(x,v)=O(1) \begin{pmatrix}\partial_z L^{\mathbf s}_{z;x}\\\partial_z L^{\mathbf s}_{z;v}\end{pmatrix} \cdot \begin{pmatrix}\tilde x_{\mathbf s}\\v\end{pmatrix}.$$
\end{proof}

\begin{lem}\label{f1f2}
For $\mathbf m$ and $\mathbf m'$ two distinct elements of $\mathtt U^{(0)}$, we have
\begin{enumerate}[label=\roman*)]
\item $\langle P_h\tilde f^{\mathbf m}_{\nu,h}, \tilde f^{\mathbf m'}_{\nu,h} \rangle=O_\nu\Big(h^\infty\sqrt{\tilde \lambda^{\mathbf m}_{\nu,h}\tilde \lambda^{\mathbf m'}_{\nu,h}}\,\Big)$
\item There exists $c>0$ such that $\big\langle \tilde f^{\mathbf m}_{\nu,h}, \tilde f^{\mathbf m'}_{\nu,h} \big\rangle=O(\e^{-c/h})$ \label{f1f22}
\end{enumerate}
\end{lem}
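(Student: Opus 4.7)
I carry out a case analysis on the relation between $E(\mathbf m)$ and $E(\mathbf m')$. By items \ref{mseul} and \ref{jvide2} of Hypothesis \ref{jvide}, these two components are either disjoint with disjoint closures (since a common saddle on their boundaries would contradict \ref{jvide2}), or one is strictly nested in the other, with the outer one having the smaller $V$-value at its global minimum by \ref{mseul}. In the disjoint case, choosing $\varepsilon'$ and $r$ small enough makes the supports of $f^{\mathbf m}_{\nu,h}$ and $f^{\mathbf m'}_{\nu,h}$ disjoint. In the nested case (WLOG $E(\mathbf m)\subsetneq E(\mathbf m')$, hence $V(\mathbf m')<V(\mathbf m)$), the intersection of the supports lies in $E(\mathbf m)+B(0,\varepsilon')$ and on this set
$$
W^{\mathbf m'}(x,v)=W(x,v)-V(\mathbf m')/2\geq (V(\mathbf m)-V(\mathbf m'))/2-O(\varepsilon')=:c_0>0 .
$$
The case $\mathbf m'=\underline{\mathbf m}$ is identical with $c_0=(V(\mathbf m)-V(\underline{\mathbf m}))/2-O(\varepsilon')$, using that $\underline{\mathbf m}\notin\overline{E(\mathbf m)}$ by \ref{mseul}. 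In every case,
$$
|\langle f^{\mathbf m}_{\nu,h},f^{\mathbf m'}_{\nu,h}\rangle|\leq e^{-c_0/h}\int e^{-W^{\mathbf m}/h}\,dx\,dv=O(h^{d/2}e^{-c_0/h}),
$$
and dividing by $\|f^{\mathbf m}_{\nu,h}\|\,\|f^{\mathbf m'}_{\nu,h}\|=O(h^{d})$ (from \eqref{f^2}) yields $O(e^{-c/h})$ for any $c\in(0,c_0)$.

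\textbf{Plan for (i).} The case $\mathbf m'=\underline{\mathbf m}$ (or $\mathbf m=\underline{\mathbf m}$) is immediate, since $f^{\underline{\mathbf m}}_h\propto \mathcal M_h$ lies in the kernels of both $X_0^h$ and $Q_h$: a direct computation gives $X_0^h\mathcal M_h=0$ (reflecting that $\mathcal M_h^2$ is a stationary state of the transport), and $\Pi_h\mathcal M_h=\mathcal M_h$ because $\mathcal M_h\in\mu_h L^2(\R^d_x)$; hence $P_h f^{\underline{\mathbf m}}_h=P_h^*f^{\underline{\mathbf m}}_h=0$, so the inner product vanishes. For distinct $\mathbf m,\mathbf m'\in\mathtt U^{(0)}\setminus\{\underline{\mathbf m}\}$, I exploit the dual localization provided by Proposition \ref{phf} and Remark \ref{Ph*f}, choosing between the identities $\langle P_h f^{\mathbf m}_{\nu,h},f^{\mathbf m'}_{\nu,h}\rangle$ and $\langle f^{\mathbf m}_{\nu,h},P_h^* f^{\mathbf m'}_{\nu,h}\rangle$ according to the nesting: if $E(\mathbf m)\subseteq E(\mathbf m')$ (or Case A), I use $P_h^*$ so that the principal part of $P_h^* f^{\mathbf m'}_{\nu,h}$, which is supported in $\mathbf j^W(\mathbf m')+B_0(0,r)$, is disjoint from $\mathrm{supp}\,f^{\mathbf m}_{\nu,h}$ for small enough $\varepsilon',r$; if instead $E(\mathbf m')\subseteq E(\mathbf m)$, the roles are reversed and I use $P_h$. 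In either case, Hypothesis \ref{jvide}\ref{jvide2} together with the case analysis of (ii) makes the contribution of the principal part vanish.

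The remaining piece to estimate is $\langle f^{\mathbf m}_{\nu,h},R^{*,\mathbf m'}_{\nu,h}\rangle$ (resp. $\langle R^{\mathbf m}_{\nu,h},f^{\mathbf m'}_{\nu,h}\rangle$). To control it I revisit the proof of Proposition \ref{phf} piece by piece: each contribution to the remainder is supported either on $\mathrm{supp}\,\nabla\chi_{\mathbf m'}$ (where $W\geq \boldsymbol\sigma(\mathbf m')+\tilde\varepsilon$) or in $\mathbf j^W(\mathbf m')+B_0(0,r)$, both of which, after the Case A/B analysis, are disjoint from $\mathrm{supp}\,f^{\mathbf m}_{\nu,h}$ for small parameters, or else meet $\mathrm{supp}\,f^{\mathbf m}_{\nu,h}$ only in a region where the product of exponentials $e^{-W^{\mathbf m}/h}e^{-S(\mathbf m')/h}$ gains a strictly larger exponent than $e^{-(S(\mathbf m)+S(\mathbf m'))/h}$. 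Combining this pointwise information with the $L^2$-bound $O_\nu(h^{(3+d)/2}e^{-S(\mathbf m')/h})$ on each piece, and using \eqref{f^2} and Proposition \ref{Pff} to compare with $\sqrt{\tilde\lambda^{\mathbf m}_{\nu,h}\tilde\lambda^{\mathbf m'}_{\nu,h}}\asymp h\,e^{-(S(\mathbf m)+S(\mathbf m'))/h}$, produces the required $O_\nu(h^\infty\sqrt{\tilde\lambda^{\mathbf m}_{\nu,h}\tilde\lambda^{\mathbf m'}_{\nu,h}})$.

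The main obstacle is precisely this last step: a blunt Cauchy--Schwarz using only the $L^2$-bound on the remainder falls short by a factor $e^{S(\mathbf m')/h}$, so one must track the support of each remainder piece carefully (in particular, handling the non-locality in $v$ introduced by the projection $\Pi_h$ inside $Q_h$), and combine it with the pointwise exponential decay of the other quasimode to recover the missing smallness.
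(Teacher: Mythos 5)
Part (ii) is correct and essentially the paper's argument: the paper splits on $V(\mathbf m)=V(\mathbf m')$ versus $V(\mathbf m)>V(\mathbf m')$, you split on topological nesting of the wells, but the two estimates coincide. One small slip: over a compact neighbourhood of $(\mathbf m,0)$ in $\R^{2d}$, Laplace's method gives $\int\e^{-W^{\mathbf m}/h}\,\D x\,\D v=O(h^{d})$, not $O(h^{d/2})$; this does not affect the conclusion after dividing by $\|f^{\mathbf m}_{\nu,h}\|\|f^{\mathbf m'}_{\nu,h}\|\asymp h^{d}$.

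Part (i) has a genuine gap, and you flag it yourself in the final paragraph. You propose to expand $\langle P_h f^{\mathbf m}_{\nu,h},f^{\mathbf m'}_{\nu,h}\rangle$ (or the adjoint pairing) through Proposition \ref{phf} and Remark \ref{Ph*f}, kill the principal part by its $x$-support together with Hypothesis~\ref{jvide}\ref{jvide2}, and then bound the remainder $R^{\mathbf m'}_{\nu,h}$. But Proposition \ref{phf} only gives an $L^2$ bound on $R^{\mathbf m'}_{\nu,h}$, with no support statement, and as you note a naive Cauchy--Schwarz is short by $\e^{-S(\mathbf m)/h}$. The fix you sketch --- ``track the support of each remainder piece'' --- does not work as stated: the dominant error pieces in the proof of Proposition \ref{phf}, namely \eqref{chizeta} and \eqref{indic}, are images under $\mathrm{Op}_h(g)\in\Psi^{1/2}(\langle(v,\eta)\rangle^{-1})$, which preserves the $x$-support but smears the $v$-support over all of $\R^d_v$. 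So the clean dichotomy ``supported on $\mathrm{supp}\,\nabla\chi_{\mathbf m'}$ or in $\mathbf j^W(\mathbf m')+B_0(0,r)$'' that you invoke is not available, and your plan would in effect require re-proving a stronger, support-aware version of Proposition \ref{phf}.

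The paper takes a shorter route that avoids the remainder altogether. Using Proposition \ref{facto},
$$\langle Q_h f^{\mathbf m}_{\nu,h},f^{\mathbf m'}_{\nu,h}\rangle=\big\langle\mathrm{Op}_h(m_h\,\mathrm{Id})\,b_hf^{\mathbf m}_{\nu,h}\,,\,b_hf^{\mathbf m'}_{\nu,h}\big\rangle,$$
and $b_hf^{\mathbf m}_{\nu,h}=h(\partial_v\theta^{\mathbf m}_{\nu,h})\chi_{\mathbf m}\e^{-W^{\mathbf m}/h}+h\theta^{\mathbf m}_{\nu,h}(\partial_v\chi_{\mathbf m})\e^{-W^{\mathbf m}/h}$. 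By \eqref{nablatheta} the first summand is $x$-supported in a $\tilde r$-neighbourhood of $\mathbf j(\mathbf m)$; since $\mathbf j(\mathbf m)\cap\mathbf j(\mathbf m')=\emptyset$ and $\mathrm{Op}_h(m_h\,\mathrm{Id})$ is local in $x$, the pairing of the two $\nabla\theta$-pieces vanishes for $\tilde r$ small. Every surviving term carries a factor $\partial_v\chi$, supported where $W^{\mathbf m}\geq S(\mathbf m)+\tilde\varepsilon$ by \eqref{suppchi}, which supplies the extra $\e^{-\tilde\varepsilon/h}=O(h^\infty)$, while the other factor is controlled by the explicit $L^2$ bound \eqref{bhf=o}. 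A similar purely local support analysis handles $\langle X_0^h f^{\mathbf m}_{\nu,h},f^{\mathbf m'}_{\nu,h}\rangle$, giving $\langle P_h f^{\mathbf m}_{\nu,h},f^{\mathbf m'}_{\nu,h}\rangle=O_\nu(h^\infty\e^{-(S(\mathbf m)+S(\mathbf m'))/h})$ and then the claim after normalization. Your observation that $P_hf^{\underline{\mathbf m}}_{h}=P_h^*f^{\underline{\mathbf m}}_{h}=0$ and your nesting/disjointness trichotomy are correct ingredients, but on their own they do not close the estimate of the remainder.
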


\begin{proof} 
The proof is a straightforward adaptation of the one of Lemma 5.5 in \cite{me}, even though the operator $P_h$ and the quasimodes $(\tilde f^{\mathbf m}_{\nu,h})_{\mathbf m}$ differ from the ones of this reference.
We recall the main steps for the reader's convenience.\\
i): The idea is to use \eqref{suppf}, the fact that $P_h$ is local in $x$, Hypothesis \ref{jvide} and the support properties of $\nabla \theta^{\mathbf m}_{\nu,h}$ and $\nabla \chi_{\mathbf m}$ to show that 
\begin{align}
\Big|\Big\langle P_h f^{\mathbf m}_{\nu,h},  f^{\mathbf m'}_{\nu,h} \Big \rangle\Big|\leq \Big\langle \mathrm{Op}_h(m_h\, \mathrm{Id})\big(\theta^{\mathbf m}_{\nu,h} (\partial_v \chi_{\mathbf m}) \e^{-W^{\mathbf m}/h}\big)\, , \, b_hf_{\mathbf m',h} \Big \rangle=O_\nu\Big(h^\infty \e^{-\frac{S(\mathbf m)+S(\mathbf m')}{h}}\Big)
\end{align}
by \eqref{bhf=o}. We can then conclude with \eqref{f^2}.
\hip 
ii): It is shown in \cite{me} (proof of Lemma 5.5) that when $V(\mathbf m)=V(\mathbf m')$, the supports of $f^{\mathbf m}_{\nu,h}$ and $f^{\mathbf m'}_{\nu,h}$ do not meet.
Thus we can suppose that $V(\mathbf m)>V(\mathbf m')$ and in that case, using once again \eqref{suppf} and Hypothesis \ref{jvide}, we show that
\begin{align*}
\Big\langle  f^{\mathbf m}_{\nu,h},  f^{\mathbf m'}_{\nu,h} \Big\rangle=\int_{E(\mathbf m)+B(0,\varepsilon')} \theta^{\mathbf m}_{\nu,h}\theta^{\mathbf m'}_{\nu,h}\chi_{\mathbf m}\chi_{\mathbf m'} \e^{-\frac{2V-V(\mathbf m)-V(\mathbf m')+v^2}{2h}}\D(x,v)=O\Big(\e^{-\frac{V(\mathbf m)-V(\mathbf m')}{2h}}\Big)
\end{align*}
so the conclusion immediately follows from \eqref{f^2}.
\end{proof}
\hip
%\begin{rema}\label{0termaterm}
%For the first item, we have actually proven that 
%$$\langle X_0^h\tilde f_{\mathbf m,h}, \tilde f_{\mathbf m',h} \rangle=\langle Q_h\tilde f_{\mathbf m,h}, \tilde f_{\mathbf m',h} \rangle=0$$
%and that 
%$$\mathrm{supp}\,P_hf_{\mathbf m,h}\subseteq \{W\geq \boldsymbol \sigma(\mathbf m)-\delta(r)\}$$
%with $\delta(r)\to 0$ as $r\to 0$.
%\end{rema}
In order to go from quasimodes to functions that actually belong to the generalized eigenspace associated to the small eigenvalues of $P_h$, let us now consider the operator 
\begin{align}\label{Pi0}
\Pi_0=\frac{1}{2i\pi}\int_{|z|=c h}(z-P_h)^{-1}\D z
\end{align}
introduced in \cite{Robbe}.
Using the resolvent estimates from Theorem \Ref{thmRobbe}, the following is established in \cite{Robbe}:

\begin{prop}\label{pi0}
The operator $\Pi_0$ is a projector on the generalized eigenspace associated to the small eigenvalues of $P_h$ and satisfies $\|\Pi_0\|=O(1)$.
\end{prop}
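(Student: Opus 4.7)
The plan is to apply the standard Riesz projector machinery, using as the sole inputs the spectral localization and resolvent bound provided by Theorem \ref{thmRobbe}.

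First, I would check that the circle $\{|z|=ch\}$ lies entirely in the resolvent set of $P_h$, possibly after shrinking $c$. By Theorem \ref{thmRobbe}, for $h$ small enough $\mathrm{Spec}(P_h)\cap\{\mathrm{Re}\,z\leq ch\}$ consists precisely of $n_0$ exponentially small eigenvalues, which therefore lie in $B(0,ch/2)$. Any other point of the spectrum satisfies $\mathrm{Re}\,z>ch$, and so in particular $|z|>ch$. Hence the contour $\{|z|=ch\}$ avoids $\mathrm{Spec}(P_h)$ and encloses exactly the $n_0$ small eigenvalues.

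Next, the operator $\Pi_0$ defined by \eqref{Pi0} is then a standard Riesz spectral projector. The identity $\Pi_0^2=\Pi_0$ follows from the resolvent equation together with a contour deformation using two concentric circles $|z|=ch$ and $|z|=c'h$ with $0<c'<c$ both contained in $\rho(P_h)$, and by the general theory $\mathrm{Ran}\,\Pi_0$ coincides with the direct sum of the algebraic eigenspaces associated to the eigenvalues enclosed by the contour, that is to say with the generalized eigenspace attached to the small spectrum of $P_h$.

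For the norm bound I would invoke the uniform resolvent estimate $(P_h-z)^{-1}=O(h^{-1})$ from Theorem \ref{thmRobbe}, valid on $\{\mathrm{Re}\,z\leq ch\}\setminus B(0,\tilde c h)$ for any $0<\tilde c\leq c$. Choosing for instance $\tilde c=c/2$, the contour $\{|z|=ch\}$ is contained in this set, so parametrizing the integral in \eqref{Pi0} and using that the length of the contour is $2\pi ch$ yields
$$\|\Pi_0\|\leq \frac{1}{2\pi}\cdot 2\pi ch \cdot O(h^{-1})=O(1).$$

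There is no real difficulty here, as the statement is a routine consequence of Theorem \ref{thmRobbe}; the only point requiring a bit of care is checking that the contour can be chosen in the resolvent set with uniform control as $h\to 0$, which is guaranteed by the exponential smallness of the enclosed eigenvalues together with the half-plane localization of the rest of the spectrum.
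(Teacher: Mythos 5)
Your proof is correct and follows exactly the strategy the paper indicates: the paper does not reproduce an argument but states that the result "is established in \cite{Robbe} using the resolvent estimates from Theorem \ref{thmRobbe}," which is precisely the Riesz-projector argument you give — contour in the resolvent set via the spectral localization, projector property by the resolvent identity, and the $O(1)$ bound by pairing the $O(h^{-1})$ resolvent estimate with the $O(h)$ contour length.
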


\begin{lem}\label{1-pi0}
Using the notations \eqref{tau}, \eqref{ftilde} and \eqref{lamtilde},
for any $\mathbf m\in \mathtt U^{(0)}$, we have 
$$\|(1-\Pi_0)\tilde f^{\mathbf m}_{\nu,h}\|=\sqrt{\tilde \lambda_{\mathbf m,h}} \bigg(O_\nu\big(1\big)+O\Big(h^{-1/2}\nu^{\frac{1}{2\sqrt{|\tau_{\mathbf s}|}}}\,|\ln (\nu)| \Big) \bigg).$$
%and
%$$\color{blue}\|(1-\Pi_0^*)\tilde f^{\mathbf m}_{\nu,h}\|=\sqrt{\tilde \lambda_{\mathbf m,h}} \bigg(O_\nu\big(1\big)+O\Big(h^{-1/2}|\ln (\nu)| \Big) \bigg).\color{black}$$
\end{lem}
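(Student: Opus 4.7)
The plan is to use the Cauchy-type contour formula \eqref{Pi0} defining $\Pi_0$ together with the resolvent estimate from Theorem~\ref{thmRobbe} and the $P_h\tilde f^{\mathbf m}_{\nu,h}$ bound already provided by Lemma~\ref{Pf^2}. The case $\mathbf m=\underline{\mathbf m}$ is trivial since $\tilde f_{\underline{\mathbf m},h}\in\mathrm{Ker}\,P_h\subseteq \mathrm{Ran}\,\Pi_0$, so both sides vanish (and $\tilde\lambda_{\underline{\mathbf m},h}=0$). I will therefore focus on $\mathbf m\in\mathtt U^{(0)}\setminus\{\underline{\mathbf m}\}$.

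The key identity is the standard resolvent expansion
$$(z-P_h)^{-1}=z^{-1}+z^{-1}(z-P_h)^{-1}P_h,$$
which, upon integration over the circle $\{|z|=ch\}$ appearing in \eqref{Pi0}, yields (since $\frac{1}{2i\pi}\oint z^{-1}\,\D z=1$)
$$(1-\Pi_0)\tilde f^{\mathbf m}_{\nu,h}=-\frac{1}{2i\pi}\int_{|z|=ch}z^{-1}(z-P_h)^{-1}P_h\tilde f^{\mathbf m}_{\nu,h}\,\D z.$$

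I would then invoke the resolvent estimate $\|(z-P_h)^{-1}\|=O(h^{-1})$ from Theorem~\ref{thmRobbe}, which holds uniformly on $|z|=ch$. Combining this with $|z|^{-1}=(ch)^{-1}$ and a contour length of $2\pi ch$ gives
$$\|(1-\Pi_0)\tilde f^{\mathbf m}_{\nu,h}\|\leq C\,h^{-1}\,\|P_h\tilde f^{\mathbf m}_{\nu,h}\|.$$
Finally, plugging in item~(i) of Lemma~\ref{Pf^2},
$$\|(1-\Pi_0)\tilde f^{\mathbf m}_{\nu,h}\|=O(h^{-1})\sqrt{h\tilde\lambda^{\mathbf m}_{\nu,h}}\,\Big(O_\nu(h^{1/2})+O\big(\nu^{\frac{1}{2\sqrt{|\tau_{\mathbf s}|}}}|\ln\nu|\big)\Big),$$
and distributing the resulting factor $h^{-1/2}$ yields exactly the claimed estimate.

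I do not expect any real obstacle here: the whole argument is a clean three-line combination of the contour formula for $\Pi_0$, the $O(h^{-1})$ resolvent bound of Theorem~\ref{thmRobbe}, and the quasimode bound of Lemma~\ref{Pf^2}. Note in particular that only item~(i) of that lemma is needed, while item~(ii) (involving $P_h^*$) will presumably be reserved for bounding a different quantity, such as $\|(1-\Pi_0^*)\tilde f^{\mathbf m}_{\nu,h}\|$ in a subsequent step of the analysis.
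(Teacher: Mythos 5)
Your proposal is correct and follows essentially the same route as the paper: both apply the contour formula for $1-\Pi_0$ from \eqref{Pi0} together with the resolvent identity $(z-P_h)^{-1}=z^{-1}+z^{-1}(z-P_h)^{-1}P_h$, then combine the $O(h^{-1})$ resolvent bound of Theorem~\ref{thmRobbe} on the circle $|z|=ch$ with item~(i) of Lemma~\ref{Pf^2}. Your bookkeeping $h^{-1}\cdot\sqrt{h\tilde\lambda^{\mathbf m}_{\nu,h}}=h^{-1/2}\sqrt{\tilde\lambda^{\mathbf m}_{\nu,h}}$ reproduces the claimed estimate exactly, and your side remark about item~(ii) being reserved for $P_h^*$ is consistent with its use in Lemma~\ref{Pi0fon}.
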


\begin{proof} 
We simply recall the proof from \cite{LPMichel}:
we write 
\begin{align*}
(1-\Pi_0)\tilde f^{\mathbf m}_{\nu,h}&=\frac{1}{2i\pi}\int_{|z|= c h}\big(z^{-1}-(z-P_h)^{-1}\big)\tilde f^{\mathbf m}_{\nu,h}\D z\\
		&=\frac{-1}{2i\pi}\int_{|z|= c h}z^{-1}(z-P_h)^{-1}P_h\tilde f^{\mathbf m}_{\nu,h}\D z.
\end{align*}
We can then conclude using Lemma \ref{Pf^2} and the resolvent estimate from Theorem \ref{thmRobbe}.
%\color{blue}The proof for the adjoint is almost identical.\color{black}
\end{proof}

\begin{lem}\label{Pi0fon}
Recall the notations \eqref{tau}, \eqref{ftilde} and \eqref{lamtilde}.
The family $\big(\Pi_0\tilde f^{\mathbf m}_{\nu,h}\big)_{\mathbf m\in \mathtt U^{(0)}}$ is almost orthonormal:
there exists $c>0$ such that 
\begin{align}\label{pi0fmpi0fm'}
\big\langle \Pi_0\tilde f^{\mathbf m}_{\nu,h}, \Pi_0\tilde f^{\mathbf m'}_{\nu,h}  \big\rangle=\delta_{\mathbf m, \mathbf m'}+O_\nu(\e^{-c/h}).
\end{align}
In particular, it is a basis of the space $\mathrm{Ran}\, \Pi_0$.\\
Moreover, we have
$$\big\langle P_h\Pi_0\tilde f^{\mathbf m}_{\nu,h}, \Pi_0\tilde f^{\mathbf m'}_{\nu,h}  \big\rangle=\delta_{\mathbf m, \mathbf m'}\tilde \lambda^{\mathbf m}_{\nu,h}+\sqrt{\tilde \lambda^{\mathbf m}_{\nu,h}\tilde \lambda^{\mathbf m'}_{\nu,h}} \,\bigg(O_\nu\Big(\sqrt h\,\Big)+O\Big(\nu^{\frac{1}{2\sqrt{|\tau_{\mathbf s}|}}}\,|\ln (\nu)|^2\Big)\bigg).$$
\end{lem}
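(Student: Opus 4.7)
The plan is to reduce both assertions to the estimates already collected in Lemmas~\ref{Pf^2}, \ref{f1f2}, and \ref{1-pi0}, by decomposing $\Pi_0 = I - (I-\Pi_0)$ on each factor of the relevant inner products and tracking the resulting error terms.

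For the almost-orthonormality \eqref{pi0fmpi0fm'}, I will expand the bilinear form into four pieces: a principal term $\langle \tilde f^{\mathbf m}_{\nu,h}, \tilde f^{\mathbf m'}_{\nu,h}\rangle = \delta_{\mathbf m,\mathbf m'} + O(\e^{-c/h})$ by Lemma~\ref{f1f2}\ref{f1f22}, and three cross terms each containing at least one factor of the form $(I-\Pi_0)\tilde f^{\mathbf m}_{\nu,h}$. Combining Lemma~\ref{1-pi0} with the equivalent $\sqrt{\tilde\lambda^{\mathbf m}_{\nu,h}} = O(\sqrt h\, \e^{-S(\mathbf m)/h})$ coming from Proposition~\ref{Pff} gives $\|(I-\Pi_0)\tilde f^{\mathbf m}_{\nu,h}\| = O_\nu(\e^{-c'/h})$ for some $c'>0$; coupled with $\|\Pi_0\| = O(1)$ from Proposition~\ref{pi0}, Cauchy--Schwarz absorbs each cross term into the advertised $O_\nu(\e^{-c/h})$ remainder. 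The cases $\mathbf m=\underline{\mathbf m}$ or $\mathbf m'=\underline{\mathbf m}$ reduce to Lemma~\ref{f1f2}\ref{f1f22} directly since $\Pi_0\tilde f^{\underline{\mathbf m}}_h = \tilde f^{\underline{\mathbf m}}_h$. Almost-orthonormality then yields linear independence, and since the family has $n_0 = \dim \mathrm{Ran}\,\Pi_0$ elements (Theorem~\ref{thmRobbe}), it forms a basis.

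The same expansion applied to $\langle P_h\Pi_0\tilde f^{\mathbf m}_{\nu,h}, \Pi_0\tilde f^{\mathbf m'}_{\nu,h}\rangle$ yields a principal term $\langle P_h\tilde f^{\mathbf m}_{\nu,h}, \tilde f^{\mathbf m'}_{\nu,h}\rangle$, equal to $\delta_{\mathbf m,\mathbf m'}\tilde\lambda^{\mathbf m}_{\nu,h}$ by \eqref{lamtilde} and Lemma~\ref{f1f2}, plus three cross terms that I would handle as follows: (i) for $\langle P_h(I-\Pi_0)\tilde f^{\mathbf m}_{\nu,h}, \tilde f^{\mathbf m'}_{\nu,h}\rangle$, move $P_h$ to the right factor via its adjoint and apply Cauchy--Schwarz with Lemmas~\ref{1-pi0} and \ref{Pf^2}; (ii) for $\langle P_h\tilde f^{\mathbf m}_{\nu,h}, (I-\Pi_0)\tilde f^{\mathbf m'}_{\nu,h}\rangle$, apply Cauchy--Schwarz directly with Lemmas~\ref{Pf^2} and \ref{1-pi0}; (iii) for $\langle P_h(I-\Pi_0)\tilde f^{\mathbf m}_{\nu,h}, (I-\Pi_0)\tilde f^{\mathbf m'}_{\nu,h}\rangle$, use $[P_h,\Pi_0]=0$ to commute $P_h$ past $(I-\Pi_0)$ and then apply Cauchy--Schwarz with $\|I-\Pi_0\|=O(1)$, Lemma~\ref{Pf^2}, and Lemma~\ref{1-pi0}. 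The cases $\mathbf m=\underline{\mathbf m}$ or $\mathbf m'=\underline{\mathbf m}$ vanish exactly because $P_h\tilde f^{\underline{\mathbf m}}_h = P_h^*\tilde f^{\underline{\mathbf m}}_h = 0$ (the second identity follows from \eqref{qlr} since $\tilde f^{\underline{\mathbf m}}_h \propto \e^{-W^{\underline{\mathbf m}}/h}$ lies in $\mathrm{Ran}\,\Pi_h$, so $Q_h$ annihilates it), matching the right-hand side, which also vanishes since $\tilde\lambda^{\underline{\mathbf m}}_{\nu,h}=0$.

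The main obstacle will be the bookkeeping of the product of error factors in each cross term. The asymmetry between Lemma~\ref{Pf^2}, item i) (which provides the sharper $\nu^{1/(2\sqrt{|\tau_{\mathbf s}|})}|\ln\nu|$ prefactor) and Lemma~\ref{Pf^2}, item ii) (which only gives $O(|\ln\nu|)$, i.e.\ $O_\nu(1)$) dictates on which side $P_h$ must be placed in term~(i); moreover, the combination $\sqrt h\cdot h^{-1/2}=1$ appearing through Lemma~\ref{1-pi0} collapses mixed $h$--$\nu$ factors to pure $\nu$-remainders. After expansion, the worst contributions are of order $\nu^{1/\sqrt{|\tau_{\mathbf s}|}}|\ln\nu|^2$, which is absorbed into the coarser target $\nu^{1/(2\sqrt{|\tau_{\mathbf s}|})}|\ln\nu|^2$ for small $\nu$, yielding the claimed bound.
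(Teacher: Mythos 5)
Your proposal is correct and follows the same route as the paper: decompose $\Pi_0 = 1 + (\Pi_0 - 1)$ on each argument (the paper does this in three terms where you use four, a purely algebraic difference), estimate the cross terms by Cauchy--Schwarz via Lemmas~\ref{f1f2}, \ref{1-pi0}, \ref{Pf^2}, Proposition~\ref{pi0}, and the size of $\tilde\lambda^{\mathbf m}_{\nu,h}$ from Proposition~\ref{Pff}, and use $[P_h,\Pi_0]=0$ where $P_h$ must be commuted past a $(1-\Pi_0)$ factor. The paper's own proof is only a reference to \cite{LPMichel} plus the two expansion identities; your careful bookkeeping of the $h$ and $\nu$ prefactors, the remark that the asymmetry between items i) and ii) of Lemma~\ref{Pf^2} forces $P_h^*$ onto the factor without the $(1-\Pi_0)$, and the explicit treatment of the $\underline{\mathbf m}$ cases, are exactly the details the paper leaves to the reader.
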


\begin{proof}
The proof is the same as the one of Proposition 4.10 in \cite{LPMichel}.
It suffices to write 
$$\big\langle \Pi_0\tilde f^{\mathbf m}_{\nu,h}, \Pi_0\tilde f^{\mathbf m'}_{\nu,h}  \big\rangle=\big\langle \tilde f^{\mathbf m}_{\nu,h}, \tilde f^{\mathbf m'}_{\nu,h}  \big\rangle+\big\langle \tilde f^{\mathbf m}_{\nu,h}, (\Pi_0-1)\tilde f^{\mathbf m'}_{\nu,h}  \big\rangle+\big\langle (\Pi_0-1)\tilde f^{\mathbf m}_{\nu,h}, \Pi_0\tilde f^{\mathbf m'}_{\nu,h}  \big\rangle$$
as well as
$$
\big\langle P_h\Pi_0\tilde f^{\mathbf m}_{\nu,h}, \Pi_0\tilde f^{\mathbf m'}_{\nu,h}  \big\rangle=\big\langle P_h\tilde f^{\mathbf m}_{\nu,h}, \tilde f^{\mathbf m'}_{\nu,h}  \big\rangle+\big\langle (\Pi_0-1)\tilde f^{\mathbf m}_{\nu,h}, P_h^*\tilde f^{\mathbf m'}_{\nu,h}  \big\rangle+\big\langle \Pi_0P_h\tilde f^{\mathbf m}_{\nu,h}, (\Pi_0-1)\tilde f^{\mathbf m'}_{\nu,h}  \big\rangle.
$$
and use all the previous results of this section together with Proposition \ref{Pff}.
\end{proof}

\hop
Let us re-label the local minima $\mathbf m_1,\dots, \mathbf m_{n_0}$ so that $(S(\mathbf m_j))_{j=1,\dots,n_0}$ is non increasing in $j$.
For shortness, we will now denote 
$$\tilde f_j=\tilde f^{\mathbf m_j}_{\nu,h} \qquad \text{and} \qquad \tilde \lambda_j=\tilde \lambda^{\mathbf m_j}_{\nu,h}$$
which still depend on $\nu$ and $h$.
Note in particular that according to Proposition \ref{Pff}, $\tilde\lambda_j =O_\nu(\tilde\lambda_k)$ whenever $1\leq j\leq k \leq n_0$.
We also denote $(\tilde u_j)_{j=1,\dots,n_0}$ the orthogonalization by the Gram-Schmidt procedure of the family $(\Pi_0\tilde f_j)_{j=1,\dots,n_0}$ and 
$$u_j=\frac{\tilde u_j}{\|\tilde u_j\|}.$$
In this setting and with our previous results, we get the following (see \cite{LPMichel}, Proposition 4.12 for a proof).
%In particular, thanks to Lemma \ref{Pi0fon} \color{red}+ mémoire \color{black}, it holds 
%$$u_{\mathbf m}=\Pi_0\tilde f_{\mathbf m,h}+O(\e^{-c/h}).$$
%It is proven in \cite{LPMichel}, Lemma 4.11 that thanks to \eqref{pi0fmpi0fm'}, we have for all $1\leq j \leq n_0$,
%\begin{align}\label{utilde}
%\tilde u_j=\Pi_0\tilde f_j+\sum_{k=1}^{j-1}\beta_{j,k}\Pi_0\tilde f_k
%\end{align}
%where $\beta_{k,j}\in \C$ and satisfy $\beta_{k,j}=O_\nu(\e^{-c/h})$.

\begin{lem}\label{Puu'}
With the notations \eqref{tau}, \eqref{ftilde} and \eqref{lamtilde},
for all $1\leq j,k \leq n_0$, it holds 
$$\langle P_h u_j, u_k \rangle=\delta_{j,k}\tilde \lambda_j +\sqrt{\tilde \lambda_j \tilde \lambda_k} \bigg(O_\nu\Big(\sqrt h\,\Big)+O\Big(\nu^{\frac{1}{2\sqrt{|\tau_{\mathbf s}|}}}\,|\ln (\nu)|^2\Big)\bigg).$$
\end{lem}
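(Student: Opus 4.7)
The plan is to transfer the estimates from Lemma \ref{Pi0fon} through the Gram-Schmidt orthonormalization, using that the almost-orthonormality of $(\Pi_0 \tilde f_j)$ makes the Gram-Schmidt coefficients exponentially close to the identity matrix.

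First, I would make the Gram-Schmidt expansion explicit. From \eqref{pi0fmpi0fm'} in Lemma \ref{Pi0fon}, the Gram matrix of $(\Pi_0 \tilde f_j)_{j=1,\dots,n_0}$ is $\mathrm{Id} + O_\nu(\mathrm{e}^{-c/h})$, so the Gram-Schmidt procedure expresses each $u_j$ as
\[
u_j = \sum_{i \leq j} a_{ij}\, \Pi_0 \tilde f_i, \qquad a_{jj} = 1 + O_\nu(\mathrm{e}^{-c/h}), \quad a_{ij} = O_\nu(\mathrm{e}^{-c/h}) \text{ for } i < j.
\]
This follows inductively: $\langle \Pi_0 \tilde f_j, u_i\rangle = O_\nu(\mathrm{e}^{-c/h})$ for $i < j$ because at each step the coefficients subtracted are bounded by the off-diagonal entries of the Gram matrix, and $\|\tilde u_j\| = 1 + O_\nu(\mathrm{e}^{-c/h})$ for the same reason.

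Next, I would bilinearly expand the quantity to compute. Using Lemma \ref{Pi0fon}, write
\[
\langle P_h u_j, u_k\rangle = \sum_{i \leq j,\, l \leq k} a_{ij}\, a_{lk} \Big( \delta_{i,l}\, \tilde\lambda_i + \sqrt{\tilde\lambda_i \tilde\lambda_l}\, \epsilon_{i,l}\Big),
\]
where $\epsilon_{i,l} = O_\nu(\sqrt h) + O\bigl(\nu^{1/(2\sqrt{|\tau_{\mathbf s}|})}|\ln\nu|^2\bigr)$. I would split the analysis into the diagonal contribution $\sum_i a_{ij}a_{ik}\tilde\lambda_i$ and the error contribution $\sum_{i,l} a_{ij}a_{lk}\sqrt{\tilde\lambda_i\tilde\lambda_l}\epsilon_{i,l}$.

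For the diagonal contribution, the term $i=j=k$ (only present when $j=k$) gives $(1+O_\nu(\mathrm{e}^{-c/h}))^2 \tilde\lambda_j = \tilde\lambda_j + O_\nu(\mathrm{e}^{-c/h})\tilde\lambda_j$, which is the main term $\delta_{j,k}\tilde\lambda_j$ plus an error absorbed into $\sqrt{\tilde\lambda_j\tilde\lambda_k}\, O_\nu(\sqrt h)$. Every other term in the diagonal sum has at least one factor $a_{ij}$ or $a_{ik}$ of size $O_\nu(\mathrm{e}^{-c/h})$, and the key monotonicity $\tilde\lambda_i = O_\nu(\tilde\lambda_{\min(j,k)})$ for $i \leq \min(j,k)$ (valid since $S(\mathbf m_i) \geq S(\mathbf m_j)$ when $i \leq j$, so $\tilde\lambda_i \lesssim \tilde\lambda_j$) bounds these terms by $O_\nu(\mathrm{e}^{-c/h})\sqrt{\tilde\lambda_j\tilde\lambda_k}$. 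For the error contribution, the leading term $i=j,\, l=k$ gives the announced $\sqrt{\tilde\lambda_j\tilde\lambda_k}\,\epsilon_{j,k}$, while every other index pair contributes one exponentially small $a$-factor, yielding again $O_\nu(\mathrm{e}^{-c/h})\sqrt{\tilde\lambda_j\tilde\lambda_k}$ after applying the monotonicity bound on $\sqrt{\tilde\lambda_i\tilde\lambda_l}$.

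There is no real obstacle here: the whole proof is bookkeeping of linear-algebraic errors, and the only subtlety is verifying that the exponentially small Gram-Schmidt corrections, multiplied by the possibly larger $\tilde\lambda_i$ with $i < \min(j,k)$, remain negligible compared to $\sqrt{\tilde\lambda_j\tilde\lambda_k}$ times the target $O_\nu(\sqrt h)$ error. This is precisely guaranteed by the monotonicity of $(\tilde\lambda_j)$ in $j$ (from the ordering of $S(\mathbf m_j)$) together with the fact that $\mathrm{e}^{-c/h} = O(h^{\infty})$.
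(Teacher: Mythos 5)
Your proof is correct and follows the standard Gram--Schmidt bookkeeping that the paper relies on: the paper itself gives no details but simply refers the reader to \cite{LPMichel}, Proposition 4.12, which uses precisely this argument (expand $u_j$ in the $\Pi_0\tilde f_i$ basis with coefficients equal to the identity up to $O_\nu(\e^{-c/h})$, then absorb the cross terms using the monotonicity $\tilde\lambda_i=O_\nu(\tilde\lambda_j)$ for $i\le j$). Your handling of the diagonal and off-diagonal contributions and your justification via $\e^{-c/h}=O(h^\infty)$ are exactly the required points.
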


%\begin{proof}
%This is an adaptation of the proof of Lemma 4.12 from \cite{LPMichel}.
%We use Lemma \ref{Pi0fon} and \eqref{utilde} to write 
%$$\langle P_h\tilde u_j, \tilde u_k \rangle=\delta_{j, k}\tilde \lambda_{j}+O\Big(h^\infty\sqrt{\tilde \lambda_{j}\tilde \lambda_{k}}\Big)+\sum_{p=1}^{j}\sum_{q=1}^{k}\beta_{p,q}'\langle P_h\Pi_0\tilde f_p, \Pi_0\tilde f_q \rangle$$
%where $\beta_{p,q}'=O_\nu(\e^{-c/h})$.
%Since for all $1\leq p\leq j$ and $1\leq q \leq k$ we have 
%$$\langle P_h\Pi_0\tilde f_p, \Pi_0\tilde f_q \rangle=O\Big(\sqrt{\tilde \lambda_{j}\tilde \lambda_{k}}\Big)$$
%according to Lemma \ref{Pi0fon}, we get
%$$\langle P_h\tilde u_j, \tilde u_k \rangle=\delta_{j, k}\tilde \lambda_{j}+O\Big(h^\infty\sqrt{\tilde \lambda_{j}\tilde \lambda_{k}}\Big).$$
%We can then conclude as \eqref{utilde} also implies that $\|\tilde u_j\|=1+O(\e^{-c/h})$ for all $1\leq j\leq n_0$.
%\end{proof}

\hop
In order to compute the small eigenvalues of $P_h$, let us now consider the restriction $P_h|_{\mathrm{Ran}\,\Pi_0}:\mathrm{Ran}\,\Pi_0\to \mathrm{Ran}\,\Pi_0$.
We denote $\hat u_j=u_{n_0-j+1}$, $\hat \lambda_j=\tilde\lambda_{n_0-j+1}$ and $\mathcal M$ the matrix of $P_h|_{\mathrm{Ran}\,\Pi_0}$ in the orthonormal basis $(\hat u_1,\dots,\hat u_{n_0})$.
Since $\hat u_{n_0}=u_1=\tilde f_1$, we have 
$$\mathcal M=\begin{pmatrix}
		\mathcal M'&0\\
		0&0
\end{pmatrix} \qquad \text{where }\qquad \mathcal M'=\Big(\langle P_h\hat u_{j}, \hat u_k \rangle\Big)_{1\leq j,k\leq n_0-1}$$
and it is sufficient to study the spectrum of $\mathcal M'$.
We will also denote $\{\hat S_1< \dots < \hat S_p\}$ the set $\{S(\mathbf m_j)\, ; \, 2\leq j \leq n_0\}$ and for $1\leq k \leq p$, $E_k$ the subspace of $L^2(\R^{2d})$ generated by $\{\hat u_r\, ; \,  S(\mathbf m_r)=\hat S_k\}$.
Finally, we set $\varpi_k=\e^{-(\hat S_k-\hat S_{k-1})/h}$ for $2\leq k \leq p$ and $\varepsilon_j(\varpi)=\prod_{k=2}^j \varpi_k=\e^{-(\hat S_j-\hat S_1)/h}$ for $2\leq j \leq p$ (with the convention $\varepsilon_1(\varpi)=1$).
In view of Proposition \ref{Pff}, let us also denote
$$\tilde \varrho_0(\mathbf m)=\frac{1}{\pi}\sum_{\mathbf s\in \mathbf j(\mathbf m)}\bigg(\frac{2+\sqrt2}{2-\sqrt2}\bigg)^{\frac{1}{\sqrt{|\tau_{\mathbf s}|}}}\bigg(\frac{\det \mathcal V_{\mathbf m}}{|\det \mathcal V_{\mathbf s}|}\bigg)^{1/2}   \int_{\gamma_1\leq z\leq \gamma < 1} k_0^{\mathbf s}(\gamma) k_0^{\mathbf s}(z) \ln \bigg(2\,\frac{(1+z)(1+\gamma)}{1+3z+3\gamma+z\gamma}\bigg) \, \D z \D \gamma$$
and
$$\hat \lambda^0_{j}=h\, \tilde \varrho_{0}(\mathbf m_{n_0-j+1})\, \e^{\frac{-2S(\mathbf m_{n_0-j+1})}{h}}.$$

\begin{lem}\label{propgas}
With the above notations, the matrix $\mathcal M'$ satisfies
$$h^{-1}\e^{2\hat S_1/h}\mathcal M'=\Omega(\varpi) \bigg(M^\#_{0}+O_\nu\Big(\sqrt h\,\Big)+O\Big(\nu^{\frac{1}{2\sqrt{|\tau_{\mathbf s}|}}}\,|\ln (\nu)|^2\Big)\bigg) \Omega(\varpi)$$
with
$$M_0^\#=\mathrm{diag}\Big(\tilde \varrho_0(\mathbf m_{n_0-j+1})  \, ; \, 1\leq j \leq n_0-1\Big)$$
and
$$\Omega(\varpi)=\mathrm{diag}\big(\varepsilon_1(\varpi)\mathrm{Id}_{E_1},\dots,\varepsilon_p(\varpi)\mathrm{Id}_{E_p}\big).$$
In particular, for all $\nu>0$, there exists $h_0>0$ such that for all $0<h<h_0$,
$$h^{-1}\e^{2\hat S_1/h}\mathcal M'=\Omega(\varpi) \bigg(M^\#_{0}+O\Big(\nu^{\frac{1}{2\sqrt{|\tau_{\mathbf s}|}}}\,|\ln (\nu)|^2\Big)\bigg) \Omega(\varpi).$$
\end{lem}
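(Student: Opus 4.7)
The plan is to read off the entries of $\mathcal M'$ from Lemma \ref{Puu'} and then factor the exponential scales through $\Omega(\varpi)$. For each $1\le j\le n_0-1$, the vector $\hat u_j$ corresponds to some minimum $\mathbf m_{n_0-j+1}$ belonging to a unique block $E_{k_j}$, i.e.\ $S(\mathbf m_{n_0-j+1})=\hat S_{k_j}$. Combining Proposition \ref{Pff} and the elementary identity $\e^{2\hat S_1/h}\e^{-2\hat S_{k_j}/h}=\varepsilon_{k_j}(\varpi)^2$ yields
$$h^{-1}\e^{2\hat S_1/h}\,\hat\lambda_j=\varepsilon_{k_j}(\varpi)^2\Bigl(\tilde\varrho_0(\mathbf m_{n_0-j+1})+O_\nu(h)+O\bigl(\nu^{1/(2\sqrt{|\tau_{\mathbf s}|})}\bigr)\Bigr).$$
This produces precisely the $(j,j)$ entry of $\Omega(\varpi)M_0^\#\Omega(\varpi)$ plus an error of the announced form, once we recall that $\varepsilon_{k_j}(\varpi)^2\le 1$ uniformly.

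For the off-diagonal entries, Lemma \ref{Puu'} gives
$$\langle P_h\hat u_j,\hat u_k\rangle=\sqrt{\hat\lambda_j\hat\lambda_k}\,\Bigl(O_\nu(\sqrt h)+O\bigl(\nu^{1/(2\sqrt{|\tau_{\mathbf s}|})}|\ln\nu|^2\bigr)\Bigr)\quad(j\ne k).$$
Using Proposition \ref{Pff} a second time, $\sqrt{\hat\lambda_j\hat\lambda_k}$ equals $h\,\varepsilon_{k_j}(\varpi)\varepsilon_{k_k}(\varpi)$ times a prefactor bounded uniformly in $h$ and $\nu$ (since $\tilde\varrho_{\nu,h}$ is bounded above and below away from $0$ for small $h,\nu$ by Proposition \ref{Pff}). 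Multiplying by $h^{-1}\e^{2\hat S_1/h}$ therefore extracts exactly the factor $\varepsilon_{k_j}(\varpi)\varepsilon_{k_k}(\varpi)$ on the outside, leaving an error of the claimed size between the two $\Omega(\varpi)$ factors. Assembling diagonal and off-diagonal contributions proves the first displayed equality.

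The "In particular" statement is then immediate: for each fixed $\nu>0$, one may choose $h_0(\nu)>0$ small enough that $O_\nu(\sqrt h)\le\nu^{1/(2\sqrt{|\tau_{\mathbf s}|})}|\ln\nu|^2$ for $0<h<h_0$, so the first error term is absorbed into the second.

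The main obstacle I anticipate is bookkeeping rather than analytic. One must verify that the Gram-Schmidt procedure defining the $u_j$ (and hence the $\hat u_j$) does not disturb the scaling structure already present in $(\Pi_0\tilde f_j)_j$. This is precisely what is ensured by Lemmas \ref{f1f2}\,\ref{f1f22}) and \ref{1-pi0}, which show that $\langle\tilde f_j,\tilde f_k\rangle=O(\e^{-c/h})$ and $\|(1-\Pi_0)\tilde f_j\|$ is controlled by $\sqrt{\tilde\lambda_j}$; consequently the transition matrix from $(\Pi_0\tilde f_{n_0-j+1}/\|\Pi_0\tilde f_{n_0-j+1}\|)_j$ to $(\hat u_j)_j$ is a perturbation of the identity by exponentially small terms, and the estimates of Lemma \ref{Puu'} transfer to $(\hat u_j)_j$ without any loss in the powers of $\varepsilon_k(\varpi)$.
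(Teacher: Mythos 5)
Your proof is correct and follows essentially the same route as the paper: read off the entries of $\mathcal M'$ from Lemma \ref{Puu'}, expand $\hat\lambda_j$ via Proposition \ref{Pff}, and factor the exponential scales through $\Omega(\varpi)$ using $\varepsilon_{k_j}(\varpi)^2=\e^{2\hat S_1/h}\e^{-2\hat S_{k_j}/h}$. The paper packages this as a splitting $\mathcal M'=\mathcal M'_1+\mathcal M'_2$ with $\mathcal M'_1=\mathrm{diag}(\hat\lambda^0_j)$ carrying the leading diagonal and $\mathcal M'_2$ the remainder, but this is only a notational economy, not a different argument; your remark that $\varepsilon_{k_j}^2\le 1$ is unnecessary once the $\varepsilon$-factors are moved outside, and your closing worry about Gram--Schmidt is already discharged by Lemma \ref{Puu'}, so it need not be re-examined here.
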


\begin{rema}\label{remgas}
In the words of Definition A.1 from \cite{LPMichel}, the last Lemma implies that for all $\nu>0$, there exists $h_0>0$ such that for all $0<h<h_0$,
$$h^{-1}\e^{2\hat S_1/h}\mathcal M' \text{ is a } \Big((E_k)_k\,,\,\varpi \,,\, \nu^{\frac{1}{2\sqrt{|\tau_{\mathbf s}|}}}\,|\ln (\nu)|^2\Big)\text{-graded matrix}.$$% \color{red}pas le meme type de reste \color{black}
\end{rema}

\begin{proof}
According to Lemma \ref{Puu'} and Proposition \ref{Pff}, we can decompose $\mathcal M'=\mathcal M'_1+\mathcal M'_2$ with 
$$\mathcal M'_1=\mathrm{diag}(\hat\lambda_j^0 \, ; \, 1\leq j \leq n_0-1)\qquad \text{and} \qquad \mathcal M'_2=\bigg(\sqrt{\hat \lambda_j \hat \lambda_k} \bigg[O_\nu\Big(\sqrt h\,\Big)+O\Big(\nu^{\frac{1}{2\sqrt{|\tau_{\mathbf s}|}}}\,|\ln (\nu)|^2\Big)\bigg]\bigg)_{1\leq j,k\leq n_0-1}.$$
It then suffices to notice that $M^\#_0=h^{-1}\e^{2\hat S_1/h}\Omega(\varpi)^{-1}\mathcal M'_1 \Omega(\varpi)^{-1}$ and that
$$h^{-1}\e^{2\hat S_1/h}\Omega(\varpi)^{-1}\mathcal M'_2 \Omega(\varpi)^{-1}=O_\nu\Big(\sqrt h\,\Big)+O\Big(\nu^{\frac{1}{2\sqrt{|\tau_{\mathbf s}|}}}\,|\ln (\nu)|^2\Big)$$
where we still used Proposition \ref{Pff}.
\end{proof}
\hop
$\textit{Proof of Theorem \ref{thmToto}}$.
According to Remark \ref{remgas}, it now suffices to combine the result of Lemma \ref{propgas} with Theorem A.4 from \cite{LPMichel} which gives a description of the spectrum of graded matrices.
We get that for all $\nu>0$, there exists $h_0>0$ such that for all $0<h<h_0$, %\eqref{lambda} holds with
$$h^{-1}\e^{2S(\mathbf m)/h}\lambda(\mathbf m,h)-\tilde \varrho_0(\mathbf m)=O\Big(\nu^{\frac{1}{2\sqrt{|\tau_{\mathbf s}|}}}\,|\ln (\nu)|^2\Big)$$
and the result is proven.
\hspace*{\fill} $\Box$

\hop
$\textit{Proof of Corollaries \ref{ral} and \ref{meta}}$.
With the notations from Theorem \ref{thmToto},  it is shown in \cite{Robbe}, section 4 with the use of PT-Symmetry arguments and a quantitative version of the Gearhart-Pr\"uss Theorem, that there exist $c>0$ and some projectors $(\Pi_j)_{1\leq j \leq n_0}$ which are $O(1)$ and such that
\begin{enumerate}[label=\textbullet]
\item $\Pi_1=\Pro_1$
\item $\Pi_j\Pi_k=\delta_{j,k}\Pi_j$
\item $\Pro_k=\sum_{\{j\, ;\,S(\mathbf m_j)\geq S(\mathbf m_k)\}}\Pi_j$
\item $\e^{-tP_h/h}=\sum_{j=1}^{n_0} \e^{-t\lambda(\mathbf m_j,h)/h}\Pi_{j}+O(\e^{-ct})$ \quad for $t\geq0$ and $h$ small enough.
\end{enumerate}
Corollary \ref{ral} directly follows, while the proof of Corollary \ref{meta} is then an easy adaptation of the one of Corollary 1.6 from \cite{BonyLPMichel}.
(Note that our notations $t_k^-$ and $t_k^+$ differ from that in \cite{BonyLPMichel}).
\hspace*{\fill} $\Box$

\addtocontents{toc}{\SkipTocEntry}
\subsection*{Acknowledgements}
The author is grateful to Jean-François Bony for very helpful discussions as well as Laurent Michel for his advice through this work, and especially for the proof of Proposition \ref{facto}.\\
This work is supported by the ANR project QuAMProcs 19-CE40-0010-01.

\appendix

\section{Structure of the collision operator}\label{qmicro}

The aim of this section is to show Proposition \ref{facto} and Corollary \ref{opgb}.
For $a$, $b$ two symbols, we denote $a\#b$ the symbol of $\mathrm{Op}_h(a) \circ \mathrm{Op}_h(b)$.
We start by showing that $Q_h$ defined in \eqref{qlr} is a pseudo-differential operator:

\begin{lem}\label{piop}
One has $\Pi_h=\mathrm{Op}_h(\varpi_h)$ with $\varpi_h\in S^{1/2}(1)$ given by
$$\varpi_h(v,\eta)=2^d\e^{-\frac{v^2+4\eta^2}{2h}}.$$ 
\end{lem}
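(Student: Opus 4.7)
The plan is to compute the Weyl symbol of $\Pi_h$ directly from its distributional kernel via the formula \eqref{FK}, and then verify the symbol class.

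First, I would write down the kernel of $\Pi_h$ explicitly. Since $\Pi_h$ is the orthogonal projection in $L^2(\mathbb R^{2d})$ onto the closed subspace $\mu_h\,L^2(\mathbb R^d_x)$, and since $\mu_h$ is normalized in $L^2(\mathbb R^d_v)$, the operator $\Pi_h$ acts only in the $v$-variable as the rank-one projector onto $\mathbb R\mu_h$. Hence its kernel (in $v$, with $x$ as a spectator) is
\[
K_h(v,v')=\mu_h(v)\mu_h(v')=\frac{1}{(2\pi h)^{d/2}}\,\e^{-\frac{v^2+v'^2}{4h}}.
\]

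Next, I would apply formula \eqref{FK} with the change of variables \eqref{mata}. A direct computation gives
\[
K_h\!\circ\! A(v,v')=K_h\!\left(v+\tfrac{v'}{2},\,v-\tfrac{v'}{2}\right)=\frac{1}{(2\pi h)^{d/2}}\,\e^{-\frac{2v^2+\frac12 v'^2}{4h}}=\frac{\e^{-v^2/(2h)}}{(2\pi h)^{d/2}}\,\e^{-v'^2/(8h)},
\]
so that
\[
\varpi_h(v,\eta)=\mathcal F_h\bigl(K_h\circ A(v,\cdot)\bigr)(\eta)=\frac{\e^{-v^2/(2h)}}{(2\pi h)^{d/2}}\int_{\mathbb R^d}\e^{-\frac{i}{h}v'\cdot\eta}\e^{-v'^2/(8h)}\,\D v'.
\]
The remaining integral is a standard Gaussian integral: completing the square yields
\[
\int_{\mathbb R^d}\e^{-\frac{v'^2}{8h}-\frac{i}{h}v'\cdot\eta}\,\D v'=(8\pi h)^{d/2}\,\e^{-2\eta^2/h}.
\]
Combining the prefactors, $(8\pi h)^{d/2}/(2\pi h)^{d/2}=2^d$, which gives the announced formula $\varpi_h(v,\eta)=2^d\e^{-(v^2+4\eta^2)/(2h)}$.

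Finally I would verify the symbol class. Since $\varpi_h$ is a Gaussian of width $\sqrt h$ in both $v$ and $\eta$, each derivative $\partial_{v,\eta}^\alpha$ produces a factor of order $h^{-|\alpha|/2}$ multiplied by a polynomial in $(v/\sqrt h,\eta/\sqrt h)$ against the Gaussian, and the product is uniformly $O(h^{-|\alpha|/2})$. This is precisely the defining estimate of $S^{1/2}(1)$, so $\varpi_h\in S^{1/2}(1)$. There is no real obstacle here; the only point requiring care is getting the Jacobian and the factors in $\mathcal F_h$ consistent with the conventions fixed just before \eqref{FK}.
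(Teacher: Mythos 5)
Your proof is correct and follows exactly the same approach as the paper: identify the kernel $\mu_h(v)\mu_h(v')$, apply formula \eqref{FK} with the change of variables \eqref{mata}, and evaluate the resulting Gaussian Fourier transform. You simply spell out the Gaussian integral and the symbol-class check that the paper leaves implicit.
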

\hip
\textit{Proof of Lemma \ref{piop}.}
First, notice that the distributional kernel of $\Pi_h$ is $\mu_h(v)\mu_h(v')$.
Using the formula \eqref{FK} to compute the symbol of a pseudo-differential operator from its distributional kernel, we get
$$\mathcal F_{h,v'}\Big(\mu_h(v+v'/2)\mu_h(v-v'/2)\Big)(v,\eta)=2^d\e^{-\frac{v^2+4\eta^2}{2h}}$$
which is clearly in $S^{1/2}(1)$ as $\e^{-\frac{v^2+4\eta^2}{2}}\in S^0(1)$. \hfill $\Box$

\hop
\textit{Proof of Proposition \ref{facto}.}
Let us first check that $m_h\in S^{1/2}\big(\langle v, \eta \rangle^{-2}\big)$.
We have 
\begin{align}\label{tildecheck}
m_h(v,\eta)=\tilde m(h^{-1/2}v,h^{-1/2}\eta)\qquad \text{and}\qquad \tilde m(v,\eta)=\check m\Big(\frac{v^2}{2}+2\eta^2\Big)
\end{align}
with
$$\tilde m(v,\eta)=2\int_0^1 (y+1)^{d-2} \e^{-y\big(\frac{v^2}{2}+2\eta^2\big)} \D y\qquad \text{and}\qquad \check m(\mu)=2\int_0^1(y+1)^{d-2}\e^{-y \mu}\D y.$$
One can then check using integration by parts that for all $k\in \N$, there exists $C_k$ such that $|\partial^k_\mu \check m(\mu)|\leq C_k\langle\mu\rangle^{-k-1}$ from which we deduce using \eqref{tildecheck} that $\tilde m\in S^0\big(\langle v, \eta \rangle^{-2}\big)$.
Thus, still using \eqref{tildecheck}, for $\alpha\in \N^{2d}$, there exists $C_\alpha$ such that 
$$|\partial^\alpha m_h(v,\eta)|=h^{-|\alpha|/2}\big|\partial^\alpha \tilde m(h^{-1/2}v,h^{-1/2}\eta)\big|\leq C_\alpha h^{-|\alpha|/2}\big\langle h^{-1/2}v, h^{-1/2}\eta \big\rangle^{-2}\leq C_\alpha h^{-|\alpha|/2}\langle v, \eta \rangle^{-2},$$
so $m_h$ indeed belongs to $S^{1/2}\big(\langle v, \eta \rangle^{-2}\big)$.
%\color{blue} Commencer par mq $m_h$ est dans la bonne classe.\color{black}
Using symbolic calculus and Lemma \ref{piop}, one could then simply check that 
\begin{align}\label{sharpsharp}
(-i\eta^t+v^t/2)\#(m_h\, \mathrm{Id})\#(i\eta+v/2)=h(1-\varpi_h)
\end{align}
but let us explain how the suitable $m_h$ (i.e the one solving \eqref{sharpsharp}) was found.
Since $(i\eta+v/2)$ and its conjugate are both polynomials of degree 1, we compute
\begin{align}\label{sharpsharp2}
(-i\eta^t+v^t/2)\#(m_h\, \mathrm{Id})\#(i\eta+v/2)=\Big(\eta^2+&\frac{v^2}{4}\Big)m_h\\
		-\frac h2 \big(dm_h+v\cdot\partial_vm_h&+\eta\cdot\partial_\eta m_h\big)+\frac{h^2}{4}\Big(\Delta_v+\frac14 \Delta_\eta\Big)m_h.
\end{align}
Let us look for solutions under the form $m_h(v,\eta)=u_h(v,\eta)\e^{\frac{v^2+4\eta^2}{2h}}$. In that case,
$$\partial_v m_h=\e^{\frac{v^2+4\eta^2}{2h}}\Big(\partial_v u_h+\frac {u_h}{h} v\Big)\qquad \text{and} \qquad \Delta_vm_h=\Big( \Delta_v u_h+\frac{2v}{h}\cdot \partial_v u_h+\frac dh u_h+\frac{v^2}{h^2}u_h\Big)  \e^{\frac{v^2+4\eta^2}{2h}} $$
so
$$\frac{h^2}{4}\Delta_vm_h -\frac h2 v\cdot\partial_vm_h=\Big( \frac{h^2}{4}\Delta_v u_h+\frac {hd}{4} u_h-\frac{v^2}{4}u_h\Big)  \e^{\frac{v^2+4\eta^2}{2h}}.$$
Similarly, we compute
$$\frac{h^2}{16}\Delta_\eta m_h -\frac h2 \eta\cdot\partial_\eta m_h=\Big( \frac{h^2}{16}\Delta_\eta u_h+\frac {hd}{4} u_h-\eta^2 u_h\Big)  \e^{\frac{v^2+4\eta^2}{2h}}$$
so according to \eqref{sharpsharp2}, \eqref{sharpsharp} becomes
$$\frac{h^2}{4}\Big(\Delta_v u_h+\frac 14 \Delta_\eta u_h\Big)=h\Big(\e^{-\frac{v^2+4\eta^2}{2h}}-2^d\e^{-\frac{v^2+4\eta^2}{h}}\Big).$$
Applying the semiclassical Fourier transform on $\R^{2d}$, this yields
\begin{align}
-\frac 14 \Big({v^*}^2+\frac{{\eta^*}^2}{4}\Big)\mathcal F_h u_h%(v^*,\eta^*)
=h(\pi h)^d  \Big(\e^{-\frac{4{v^*}^2+{\eta^*}^2}{8h}}-\e^{-\frac{4{v^*}^2+{\eta^*}^2}{16h}}\Big)=-\frac{(\pi h)^d}{4}\Big({v^*}^2+\frac{{\eta^*}^2}{4}\Big)\int_1^2 \e^{-s\frac{4{v^*}^2+{\eta^*}^2}{16h}}\D s
\end{align}
where $(v^*,\eta^*)$ denotes the dual variable of $(v,\eta)$.
Hence 
$$\mathcal F_h u_h(v^*,\eta^*)=(\pi h)^d\int_1^2 \e^{-s\frac{4{v^*}^2+{\eta^*}^2}{16h}}\D s$$
and applying the inverse semiclassical Fourier transform, we get 
$$u_h(v,\eta)=2^d\int_1^2 s^{-d}\e^{-\frac{v^2+4\eta^2}{sh}}\D s.$$
Consequently,
$$m_h(v,\eta)= 2^d\int_1^2 s^{-d}\e^{-\frac{v^2+4\eta^2}{2h}(\frac 2s -1)}\D s$$
and we find the final expression of $m_h$ by substituting $y=\frac 2s -1$. 
\hfill $\Box$

\hop
\textit{Proof of Corollary \ref{opgb}.}
By symbolic calculus, we just have to check that $g_h=(-i\eta^t+v^t/2)\# (m_h\, \mathrm{Id})$.
Since the symbol on the left hand side is a polynomial of degree 1, we have 
$$(-i\eta^t+v^t/2)\# (m_h\, \mathrm{Id})=m_h(-i\eta^t+ v^t/2)-\frac{h}{2}\Big( \partial_v^t-\frac i2  \partial_\eta^t\Big)m_h.$$
Now 
$$-\frac h2 \partial_v^t m_h(v,\eta)=\int_0^1y(y+1)^{d-2} \e^{-\frac{y}{h}\big(\frac{v^2}{2}+2\eta^2\big)} \D y\, v^t$$
so we easily get
$$m_h(v,\eta) \,\frac{v^t}{2}-\frac{h}{2}\partial_v^t m_h(v,\eta)=\int_0^1 (y+1)^{d-1} \e^{-\frac{y}{h}\big(\frac{v^2}{2}+2\eta^2\big)} \D y\,v^t.$$
One can show similarly that 
$$-im_h(v,\eta)\,\eta^t+\frac{ih}{4}\partial_\eta^t m_h(v,\eta)=-2i\int_0^1 (y+1)^{d-1} \e^{-\frac{y}{h}\big(\frac{v^2}{2}+2\eta^2\big)} \D y\,\eta^t$$
which is enough to conclude. \hfill $\Box$

\section{Bilinear algebra}

\begin{lem}\label{det}
Let $L(x,v)=L_x\cdot x+L_v\cdot v$ a linear form on $\R^{2d}$ and recall the notation \eqref{hessiennes}.
Then for any $\mathbf s\in \mathtt U^{(1)}$, the matrix $\mathcal W_{\mathbf s} +\nabla L\, \nabla L^t$ is positive definite if and only if 
\begin{align}\label{w+l2pos}
-\mathcal V_{\mathbf s}^{-1}L_x\cdot L_x-L_v^2>\frac12.
\end{align}
Moreover, its determinant is
$$2^{-2d} \det \mathcal V_{\mathbf s}\,\big(1+2\mathcal V_{\mathbf s}^{-1}L_x\cdot L_x+2L_v^2\big).$$
\end{lem}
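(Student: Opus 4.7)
The plan is to exploit the block-diagonal structure
$$\mathcal{W}_{\mathbf{s}}=\begin{pmatrix}\mathcal{V}_{\mathbf{s}}/2 & 0 \\ 0 & \mathrm{Id}/2\end{pmatrix}$$
coming from the definition $W=V/2+v^2/4$, and to treat $\nabla L\,\nabla L^t$ as a rank-one positive semi-definite perturbation of $\mathcal{W}_{\mathbf{s}}$. Writing $u=\nabla L=(L_x,L_v)^t$, the matrix-determinant lemma gives
$$\det(\mathcal{W}_{\mathbf{s}}+uu^t)=\det(\mathcal{W}_{\mathbf{s}})\,\bigl(1+\mathcal{W}_{\mathbf{s}}^{-1}u\cdot u\bigr).$$
From the block form one reads off $\det(\mathcal{W}_{\mathbf{s}})=2^{-2d}\det\mathcal{V}_{\mathbf{s}}$ and $\mathcal{W}_{\mathbf{s}}^{-1}u\cdot u=2\mathcal{V}_{\mathbf{s}}^{-1}L_x\cdot L_x+2L_v^2$, which yields the announced determinant formula directly.

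For the positive-definiteness equivalence, the key observation is an inertia/interlacing argument. Since $\mathbf{s}\in\mathtt{U}^{(1)}$, the Hessian $\mathcal{V}_{\mathbf{s}}$ has exactly one negative eigenvalue and $d-1$ positive ones, so $\mathcal{W}_{\mathbf{s}}$ has signature $(2d-1,1)$ and in particular $\det\mathcal{V}_{\mathbf{s}}<0$. Adding the rank-one positive semi-definite matrix $uu^t$ can only increase each eigenvalue (Weyl's monotonicity), and by the Cauchy interlacing theorem the eigenvalues of $\mathcal{W}_{\mathbf{s}}+uu^t$ interlace with those of $\mathcal{W}_{\mathbf{s}}$. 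Hence the perturbed matrix has signature either $(2d,0)$ or $(2d-1,1)$, so it is positive definite if and only if its determinant is positive.

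Combining these two steps, positive-definiteness amounts to
$$2^{-2d}\det(\mathcal{V}_{\mathbf{s}})\bigl(1+2\mathcal{V}_{\mathbf{s}}^{-1}L_x\cdot L_x+2L_v^2\bigr)>0,$$
and since $\det\mathcal{V}_{\mathbf{s}}<0$ this is equivalent to $1+2\mathcal{V}_{\mathbf{s}}^{-1}L_x\cdot L_x+2L_v^2<0$, i.e. $-\mathcal{V}_{\mathbf{s}}^{-1}L_x\cdot L_x-L_v^2>\tfrac{1}{2}$. The only delicate step is the inertia argument; if needed I would justify it by the continuous deformation $t\mapsto \mathcal{W}_{\mathbf{s}}+t\,uu^t$ on $[0,1]$, noting that the number of negative eigenvalues can only change when the determinant vanishes, and that a rank-one nonnegative perturbation cannot create new negative eigenvalues.
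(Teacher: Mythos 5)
Your proof is correct and follows essentially the same route as the paper: you reduce positive definiteness to the sign of the determinant by noting that adding the positive semi-definite rank-one term $\nabla L\,\nabla L^t$ cannot create more than the one negative eigenvalue already present in $\mathcal W_{\mathbf s}$, then compute the determinant via the matrix determinant lemma and the block-diagonal form of $\mathcal W_{\mathbf s}$. The paper phrases the eigenvalue count via $\mathcal W_{\mathbf s}+\nabla L\,\nabla L^t\geq \mathcal W_{\mathbf s}$ (Weyl monotonicity, without naming it) and obtains the factor $1+2\mathcal V_{\mathbf s}^{-1}L_x\cdot L_x+2L_v^2$ by a direct inspection of $\mathrm{Id}+\mathcal W_{\mathbf s}^{-1}\nabla L\,\nabla L^t$ on $\nabla L^\perp$ and on $\nabla L$, which is just the matrix determinant lemma unpacked; your appeal to Cauchy interlacing is redundant next to Weyl monotonicity but not incorrect.
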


\begin{proof}
First notice that since $\mathbf s\in \mathtt U^{(1)}$ and $\mathcal W_{\mathbf s} +\nabla L\, \nabla L^t \geq \mathcal W_{\mathbf s}$,
%the hessian of $W+\frac12 L^2(x-\mathbf s,v)$ at $(\mathbf s,0)$ has at most one nonpositive eigenvalue, so it is sufficient to show that its determinant is positive if and only if \eqref{w+l2pos} holds. 
%Indeed,
%$$\mathrm{Hess}_ {(\mathbf s,0)}\Big(W+\frac12 L^2(x-\mathbf s,v)\Big)=\mathcal W_{\mathbf s} +\nabla L\, \nabla L^t \geq \mathcal W_{\mathbf s}$$
the matrix $\mathcal W_{\mathbf s} +\nabla L\, \nabla L^t$ has at most one negative eigenvalue, so it is sufficient to show that its determinant is positive if and only if \eqref{w+l2pos} holds.
The rest of the proof is inspired by \cite{BonyLPMichel} (Lemma 3.3).
We have 
\begin{align}\label{detdet}
\det\Big(\mathcal W_{\mathbf s} +\nabla L\, \nabla L^t\Big)=\det \mathcal W_{\mathbf s}\det\Big(\mathrm{Id}+\mathcal W_{\mathbf s}^{-1}\nabla L\, \nabla L^t\Big)=2^{-2d} \det \mathcal V_{\mathbf s}\det\Big(\mathrm{Id}+\mathcal W_{\mathbf s}^{-1}\nabla L\, \nabla L^t\Big)
\end{align}
and since $\det \mathcal V_{\mathbf s}<0$, it only remains to show that
\begin{align}\label{posiff}
\eqref{w+l2pos}\iff \det\Big(\mathrm{Id}+\mathcal W_{\mathbf s}^{-1}\nabla L\, \nabla L^t\Big)<0.
\end{align}
Now it is easy to see that
$$\Big(\mathrm{Id}+\mathcal W_{\mathbf s}^{-1}\nabla L\, \nabla L^t\Big)|_{\nabla L^\perp}=\mathrm{Id}\quad \; \text{and} \quad \;  \Big(\mathrm{Id}+\mathcal W_{\mathbf s}^{-1}\nabla L\, \nabla L^t\Big)\nabla L\cdot \nabla L=\big(1+2\mathcal V_{\mathbf s}^{-1}L_x\cdot L_x+2L_v^2\big)|\nabla L|^2.$$
Hence, $\det\big(\mathrm{Id}+\mathcal W_{\mathbf s}^{-1}\nabla L\, \nabla L^t\big)=1+2\mathcal V_{\mathbf s}^{-1}L_x\cdot L_x+2L_v^2$ which is negative if and only if \eqref{w+l2pos} holds true.
\end{proof}

\begin{lem}\label{det2}
Recall the notations \eqref{hessiennes} and \eqref{hy>}. For $\gamma \in [\gamma_1+\nu,1]$ and $y\in (0,1)$, we have 
\begin{align}\label{dethy}
{}\qquad \det H^{\mathbf s}_{\gamma,y}=\frac{(1+y)^{2d-2}}{(4y)^d} \Big(1+\big(1+2|L_{\gamma,v}^{\mathbf s}|^2\big)y\Big)^2\,|\det \mathcal V|.
\end{align}
\end{lem}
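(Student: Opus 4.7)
The plan is to exploit the symmetry between the two $v$-blocks of $H^{\mathbf s}_{\gamma,y}$ by performing a block congruence that decouples them. Consider the invertible $3d\times 3d$ matrix
$$
P=\begin{pmatrix}\mathrm{Id}_d & 0 & 0 \\ 0 & \mathrm{Id}_d & \mathrm{Id}_d \\ 0 & \mathrm{Id}_d & -\mathrm{Id}_d\end{pmatrix},
$$
whose determinant equals $(-2)^d$, so $(\det P)^{2}=4^{d}$. A direct block-by-block computation (using the fact that $H_{23}^{\mathbf s}=H_{32}^{\mathbf s}$ is a scalar multiple of the identity, while $H_{22}^{\mathbf s}=H_{33}^{\mathbf s}$ and $H_{12}^{\mathbf s}=H_{13}^{\mathbf s}$) gives
$$
P^{T} H^{\mathbf s}_{\gamma,y} P=\begin{pmatrix} \mathcal V_{\mathbf s}+2L_{\gamma,x}L_{\gamma,x}^{t} & 2L_{\gamma,x}L_{\gamma,v}^{t} & 0 \\ 2L_{\gamma,v}L_{\gamma,x}^{t} & (y+1)\,\mathrm{Id}+2L_{\gamma,v}L_{\gamma,v}^{t} & 0 \\ 0 & 0 & \tfrac{y+1}{y}\mathrm{Id}+2L_{\gamma,v}L_{\gamma,v}^{t}\end{pmatrix},
$$
the point being that the cross terms between the two $v$-blocks have collapsed (this is a discrete analogue of passing to the $v\pm v'$ coordinates used in the proof of Lemma \ref{qygammagamma}). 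It remains to compute the determinant of this block-diagonal matrix and divide by $4^d$.

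The $v'''$-block is handled by the rank-one update formula $\det(\alpha\,\mathrm{Id}+uu^{t})=\alpha^{d-1}(\alpha+|u|^{2})$, which gives
$$
\det\!\Big(\tfrac{y+1}{y}\mathrm{Id}+2L_{\gamma,v}L_{\gamma,v}^{t}\Big)=\Big(\tfrac{y+1}{y}\Big)^{d-1}\cdot\frac{y+1+2y|L_{\gamma,v}^{\mathbf s}|^{2}}{y}.
$$
For the upper-left $2d\times 2d$ block, apply Schur complementation with respect to the $v''$-subblock. The key inversion is $\bigl((y+1)\mathrm{Id}+2L_{\gamma,v}L_{\gamma,v}^{t}\bigr)^{-1}L_{\gamma,v}=L_{\gamma,v}/(y+1+2|L_{\gamma,v}^{\mathbf s}|^{2})$, so the Schur complement equals
$$
\mathcal V_{\mathbf s}+\frac{2(y+1)}{y+1+2|L_{\gamma,v}^{\mathbf s}|^{2}}\,L_{\gamma,x}L_{\gamma,x}^{t}.
$$

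The main computation — and the only place where the specific choice of $(\ell^{\mathbf s})$ matters — is to simplify the determinant of this last matrix using Lemma \ref{bonmin}, which asserts $\mathcal V_{\mathbf s}^{-1}L_{\gamma,x}^{\mathbf s}\cdot L_{\gamma,x}^{\mathbf s}=-1-|L_{\gamma,v}^{\mathbf s}|^{2}$. Plugging this into $\det(\mathcal V_{\mathbf s})\big(1+\frac{2(y+1)}{y+1+2|L_{\gamma,v}^{\mathbf s}|^{2}}\mathcal V_{\mathbf s}^{-1}L_{\gamma,x}\!\cdot\! L_{\gamma,x}\big)$ and using $\det\mathcal V_{\mathbf s}<0$ yields, after a short algebraic manipulation, $|\det \mathcal V_{\mathbf s}|\cdot\frac{y+1+2y|L_{\gamma,v}^{\mathbf s}|^{2}}{y+1+2|L_{\gamma,v}^{\mathbf s}|^{2}}$. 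Combined with $\det\!\big((y+1)\mathrm{Id}+2L_{\gamma,v}L_{\gamma,v}^{t}\big)=(y+1)^{d-1}(y+1+2|L_{\gamma,v}^{\mathbf s}|^{2})$, the factor $(y+1+2|L_{\gamma,v}^{\mathbf s}|^{2})$ telescopes away.

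Multiplying the two block determinants and dividing by $(\det P)^{2}=4^{d}$ gives
$$
\det H^{\mathbf s}_{\gamma,y}=\frac{(y+1)^{2d-2}\,\bigl(y+1+2y|L_{\gamma,v}^{\mathbf s}|^{2}\bigr)^{2}}{(4y)^{d}}\,|\det\mathcal V_{\mathbf s}|,
$$
and since $y+1+2y|L_{\gamma,v}^{\mathbf s}|^{2}=1+(1+2|L_{\gamma,v}^{\mathbf s}|^{2})y$, this matches \eqref{dethy}. The only real obstacle is the bookkeeping: one must carefully track the Jacobian factor $4^{d}$ produced by the congruence $P^{T}\!\cdot\! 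P$ and ensure that the crucial cancellation leading to $(y+1+2y|L_{\gamma,v}^{\mathbf s}|^{2})^{2}$ (rather than, e.g., a product of two distinct factors) is a consequence of Lemma \ref{bonmin}.
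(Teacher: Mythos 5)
Your proof is correct, and it takes a genuinely different route from the paper. You perform the symmetric block congruence $P^{T}H^{\mathbf s}_{\gamma,y}P$ (with $(\det P)^2=4^d$) to decouple the two $v$-blocks into a $2d\times 2d$ block and a $d\times d$ block, then finish with a Schur complement and the rank-one determinant formula; Lemma \ref{bonmin} enters once, in the Schur-complement determinant, and produces the crucial factor $y+1+2y|L_{\gamma,v}^{\mathbf s}|^2$ twice (once from each diagonal block), which is exactly the square in \eqref{dethy}. The paper instead factors $H^{\mathbf s}_{\gamma,y}=D_0\bigl(\mathrm{Id}+D_0^{-1}UV^{t}\bigr)$ where $D_0$ is the block-diagonal matrix without the $L$-contributions and $UV^{t}$ is an explicit rank-two matrix; the determinant of $D_0$ is read off directly, and the determinant of $\mathrm{Id}+D_0^{-1}UV^{t}$ is evaluated by exhibiting explicit eigenvectors for the two nonzero eigenvalues of the rank-two piece, again using Lemma \ref{bonmin}. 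Your congruence-plus-Schur argument is more routine linear algebra and avoids the guesswork of finding eigenvectors, at the small price of carrying the Jacobian factor $4^d$ and the rank-one inversion of $(y+1)\mathrm{Id}+2L_{\gamma,v}L_{\gamma,v}^{t}$; the paper's rank-two factorization is terser and closer in spirit to the $\det(\mathrm{Id}+AB)=\det(\mathrm{Id}+BA)$ philosophy used elsewhere (compare Lemma \ref{det}), but requires more cleverness to set up. Both verify all intermediate identities correctly and arrive at \eqref{dethy}.
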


\begin{proof}
We drop some exponents and indexes $\mathbf s$ in the notations for shortness.
Let us begin by writing
\begin{align}\label{1+htilde}
\text{ }\qquad H_{\gamma,y}
&=
\begin{pmatrix}  \mathcal V&0&0 \\
0 & \frac{(y+1)^2}{4y}  & \frac{y^2-1}{4y} \\
0&  \frac{y^2-1}{4y} & \frac{(y+1)^2}{4y} \end{pmatrix}
\left[
\mathrm{Id}+
\begin{pmatrix}
\mathcal V^{-1}&0&0 \\
0 & 1  & \frac{1-y}{1+y} \\
0&  \frac{1-y}{1+y} & 1
\end{pmatrix}
\begin{pmatrix}
L_{\gamma,x} & L_{\gamma,x}\\
L_{\gamma,v}&0\\
0&L_{\gamma,v}
\end{pmatrix}
\begin{pmatrix}
L_{\gamma,x}^t & L_{\gamma,v}^t & 0\\
L_{\gamma,x}^t&0&L_{\gamma,v}^t
\end{pmatrix}
\right].
\end{align}
Clearly, the determinant of the first factor is $(4y)^{-d}(y+1)^{2d}\det \mathcal V$.
Denoting
$$\tilde H_{\gamma,y}=\begin{pmatrix}
\mathcal V^{-1}&0&0 \\
0 & 1  & \frac{1-y}{1+y} \\
0&  \frac{1-y}{1+y} & 1
\end{pmatrix}
\begin{pmatrix}
L_{\gamma,x} & L_{\gamma,x}\\
L_{\gamma,v}&0\\
0&L_{\gamma,v}
\end{pmatrix}
\begin{pmatrix}
L_{\gamma,x}^t & L_{\gamma,v}^t & 0\\
L_{\gamma,x}^t&0&L_{\gamma,v}^t
\end{pmatrix},$$
it is also clear that $\tilde H_{\gamma,y}$ has rank 2, so it has at most 2 non zero eigenvalues.
Besides, using Lemma \ref{bonmin}, one can easily check that%$((1+y)\mathcal V^{-1}\ell_x, \ell_v,\ell_v)$ is an eigenvector of $\tilde H_y$ associated to the eigenvalue
$$\tilde H_{\gamma,y}\begin{pmatrix}(1+y)\mathcal V^{-1}L_{\gamma,x}\\ L_{\gamma,v}\\ L_{\gamma,v}\end{pmatrix}=\frac{-2}{1+y}\Big(1+\big(1+|L_{\gamma,v}^{\mathbf s}|^2\big)y\Big)\begin{pmatrix}(1+y)\mathcal V^{-1}L_{\gamma,x}\\ L_{\gamma,v}\\ L_{\gamma,v}\end{pmatrix}$$
and
$$\tilde H_{\gamma,y}\begin{pmatrix}0\\ L_{\gamma,v}\\ -L_{\gamma,v}\end{pmatrix}=\frac{2y|L_{\gamma,v}|^2}{1+y}\begin{pmatrix}0\\ L_{\gamma,v}\\ -L_{\gamma,v}\end{pmatrix}.$$
Hence, %using Lemma \ref{bonmin}, 
the determinant of the second factor from \eqref{1+htilde} is
$$-(1+y)^{-2}\Big(1+\big(1+2|L_{\gamma,v}^{\mathbf s}|^2\big)y\Big)^2$$
and we get \eqref{dethy}.
%Finally, since $\det \mathcal V$ is negative and $\det H_y$ is positive by \eqref{hy>} and Lemma \ref{hy}, \eqref{averif} can easily be deduced from \eqref{dethy}.
\end{proof}

\section{Multivariate gaussian moment}
\hip
Using the formulas of the first moments of the one dimensional gaussian, we easily establish the following.
\begin{prop}\label{mom2}
If $A$ is a real symmetric matrix, then 
$$\int_{\R^{d'}}Ax\cdot x\, \e^{-\frac{x^2}{2}}\D x=(2\pi)^{d'/2}\,\mathrm{Tr}(A).$$
\end{prop}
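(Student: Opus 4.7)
The plan is to reduce the claim to the one-dimensional Gaussian moment formulas recalled just before the statement, via an expansion in coordinates. Writing $A = (A_{ij})_{1 \leq i,j \leq d'}$, one has $Ax \cdot x = \sum_{i,j=1}^{d'} A_{ij} x_i x_j$, so by linearity
\begin{equation*}
\int_{\R^{d'}} Ax\cdot x\, \e^{-\frac{x^2}{2}}\, \D x = \sum_{i,j=1}^{d'} A_{ij} \int_{\R^{d'}} x_i x_j\, \e^{-\frac{x^2}{2}}\, \D x.
\end{equation*}

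Next, I would use the product structure $\e^{-x^2/2} = \prod_{k=1}^{d'} \e^{-x_k^2/2}$ and apply Fubini's theorem to split each integral into a product of one-dimensional Gaussian integrals. For $i \neq j$, the factor in the $x_i$-direction is $\int_\R x_i\, \e^{-x_i^2/2}\,\D x_i = 0$ by oddness, so only the diagonal terms $i = j$ survive. For $i = j$, the standard computations $\int_\R \e^{-t^2/2}\,\D t = \sqrt{2\pi}$ and $\int_\R t^2 \e^{-t^2/2}\,\D t = \sqrt{2\pi}$ give
\begin{equation*}
\int_{\R^{d'}} x_i^2\, \e^{-\frac{x^2}{2}}\,\D x = \sqrt{2\pi}\cdot (\sqrt{2\pi})^{d'-1} = (2\pi)^{d'/2}.
\end{equation*}

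Combining these two observations, the double sum collapses to
\begin{equation*}
\sum_{i=1}^{d'} A_{ii} (2\pi)^{d'/2} = (2\pi)^{d'/2}\, \mathrm{Tr}(A),
\end{equation*}
which is the claimed identity. The hypothesis that $A$ is symmetric is in fact not used in this argument; only the trace-diagonal structure matters. There is no real obstacle here: the proof is entirely routine, and the only thing worth being careful about is keeping track of the power of $\sqrt{2\pi}$ when separating the integral into its one-dimensional factors.
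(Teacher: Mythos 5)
Your proof is correct and follows exactly the route the paper hints at (the paper only says it ``easily'' follows from one-dimensional Gaussian moment formulas and gives no further detail): expand $Ax\cdot x$ coordinatewise, use Fubini and the product structure of $\e^{-x^2/2}$, and note that off-diagonal terms vanish by oddness while diagonal ones contribute $(2\pi)^{d'/2}$. Your side remark that symmetry of $A$ is not actually needed is also accurate, since only the diagonal entries survive the integration.
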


\section{Laplace's method}
\hip
Here we give a precise statement of Laplace's method that we use to approximate $h$-dependent integrals.

\begin{prop}\label{lap}
Let $x_0 \in  \R^{d'}$, $K$ a compact neighborhood of $x_0$ and $\varphi\in \mathcal C^\infty(K)$ such that $x_0$ is a non degenerate minimum of $\varphi$ and its only global minimum on $K$.
Denote $H\in \mathcal M_{d'}(\R)$ the Hessian of $\varphi$ at $x_0$.
\begin{enumerate}[label=\textbullet]
\item If $a_h$ is a function bounded uniformly in $h$ on $K$ such that 
$$a_h=O\Big((x-x_0)^{2n}\Big),$$
then
$$h^{-d'/2}\int_{K}a_h(x)\e^{-\frac{\varphi(x)-\varphi(x_0)}{h}}\D x=O(h^n).$$
\item If $a_h \sim \sum_{j\geq 0}h^j a_j $ in $\mathcal C^\infty (K)$,
then the integral
$$\frac{\det (H)^{1/2}}{(2\pi h)^{d'/2}} \int_{K}a_h(x)\e^{-\frac{\varphi(x)-\varphi(x_0)}{h}}\D x$$
admits a classical expansion whose first term is given by $a_0(x_0)$.
\end{enumerate}
\end{prop}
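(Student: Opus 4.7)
My plan is to reduce both statements to the standard Gaussian computation via a Morse coordinate change followed by the parabolic rescaling $y = \sqrt{h}\,z$. First I would invoke the Morse lemma at the non-degenerate minimum $x_0$: there exist a neighborhood $U \subset K$ of $x_0$ and a smooth diffeomorphism $\psi: U \to V \subset \R^{d'}$ with $\psi(x_0)=0$ such that
\[
\varphi\circ \psi^{-1}(y) - \varphi(x_0) = \tfrac{1}{2}\langle H y, y\rangle.
\]
Since $x_0$ is the \emph{only} global minimum of $\varphi$ on the compact set $K$, there is $c>0$ with $\varphi - \varphi(x_0) \geq c$ on $K \setminus U$, so the contribution of $K\setminus U$ is $O(\e^{-c/h})$ and hence negligible with respect to any polynomial order in $h$ for both statements. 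Setting $J(y) = |\det D\psi^{-1}(y)|$ (so that $J(0)=1$) and $b_h(y) = a_h(\psi^{-1}(y))\,J(y)$, the problem is reduced to analyzing
\[
I_h := \int_V b_h(y)\, \e^{-\frac{1}{2h}\langle H y, y\rangle}\,\D y.
\]

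Next I would perform the rescaling $y = \sqrt{h}\,z$, whose Jacobian contributes a factor $h^{d'/2}$. For part (i), the hypothesis $a_h = O\big((x-x_0)^{2n}\big)$ uniformly in $h$ pulls back to $b_h(y) = O(|y|^{2n})$ uniformly in $h$ on $V$, hence $b_h(\sqrt{h}\,z) = O\big(h^n |z|^{2n}\big)$. Combined with the Gaussian weight and the elementary fact that $\int_{\R^{d'}} |z|^{2n}\, \e^{-\frac{1}{2}\langle H z,z\rangle}\,\D z < \infty$ (since $H$ is positive definite), this gives $I_h = O(h^{n+d'/2})$, which is exactly the required bound after dividing by $h^{d'/2}$.

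For part (ii), I would Taylor expand $b_h(\sqrt{h}\,z)$ around $y=0$ to arbitrary order $N$, using the assumed asymptotic expansion $a_h \sim \sum_{j \geq 0} h^j a_j$ in $\mathcal{C}^\infty(K)$ to replace each $\partial^k a_h(x_0)$ by its own asymptotic expansion. The resulting double expansion in powers of $\sqrt h$ and $h$ is integrated against the Gaussian: odd-order terms in $z$ vanish by parity, and even-order terms contribute integer powers of $h$. Hence $I_h$ admits a full classical expansion in $h$, and the leading contribution equals $b_h(0)\int_{\R^{d'}} \e^{-\frac{1}{2}\langle H z,z\rangle}\,\D z = a_0(x_0)(2\pi)^{d'/2}\det(H)^{-1/2} + O(h)$, so multiplication by the normalization $\det(H)^{1/2}/(2\pi h)^{d'/2}$ yields first term $a_0(x_0)$.

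The argument is entirely classical; the only technical point is to control the Taylor remainder $O\big(h^{(N+1)/2}|z|^{N+1}\big)$ uniformly in $z$ on $V/\sqrt h$, which follows from the smoothness of $a_h$ in the asymptotic sense together with the Gaussian decay in $z$. There is no substantive obstacle — this is the textbook Laplace method, included in the appendix purely as a convenient reference for the main body of the paper.
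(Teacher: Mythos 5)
The paper does not actually provide a proof of Proposition~\ref{lap}: it is stated in Appendix~D as a standard reference result (``Here we give a precise statement of Laplace's method that we use...''). There is therefore no paper proof to compare against. Your argument is correct and is the textbook treatment: localize near $x_0$ using that it is the unique global minimum on the compact $K$ (exponentially small outside contribution), flatten $\varphi$ by the Morse lemma normalized so that $D\psi(x_0)=\mathrm{Id}$ and $\varphi\circ\psi^{-1}(y)-\varphi(x_0)=\tfrac12\langle Hy,y\rangle$ (the same normalization the paper uses at \eqref{phi}), rescale $y=\sqrt{h}\,z$, and then in (i) dominate by Gaussian moments of order $2n$, and in (ii) Taylor expand the amplitude in $y$ while inserting the assumed $\mathcal C^\infty$ asymptotic expansion of $a_h$, using parity to kill odd powers of $\sqrt h$. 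One small cosmetic point: after rescaling the integration domain is $V/\sqrt h$, not all of $\R^{d'}$; the extension costs an $O(\e^{-c/h})$ error which you should mention explicitly (it is absorbed the same way as the $K\setminus U$ contribution), but this does not affect the substance of the argument.
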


%\nocite{*}
\nocite{}
\bibliography{bibBoltz} 
%\addcontentsline{toc}{section}{Bibliography}
\bigskip
\scshape \small Thomas Normand, Laboratoire de Math\'ematiques Jean Leray, Universit\'e de Nantes
   %\centerline{351, Cours de la Lib\'eration - F 33 405 TALENCE, France}
\normalfont

\end{document}